\documentclass[12pt,a4paper]{article}

\usepackage{amsmath,amssymb}	

\usepackage{graphicx}		  

\usepackage{tjm}
\usepackage[driver=dvips,text={372pt,533pt},centering]{geometry}

\usepackage{mathrsfs}
\usepackage{fancybox}
\usepackage{bm}
\usepackage{amsthm}
\usepackage[all]{xy} 
\usepackage{mathtools}
\usepackage{float}
\usepackage{hyperref}
\usepackage{longtable}
\usepackage{enumitem} 

\DeclareMathOperator{\Aut}{Aut}
\DeclareMathOperator{\Lie}{Lie}
\DeclareMathOperator{\tr}{tr}
\DeclareMathOperator{\ad}{ad}
\DeclareMathOperator{\Ad}{Ad}

\DeclareMathOperator{\id}{id}
\DeclareMathOperator{\Diff}{Diff}
\DeclareMathOperator{\Isom}{Isom}
\DeclareMathOperator{\GL}{GL}

\numberwithin{equation}{section}

\newtheorem{theorem}{Theorem}[section]
\newtheorem{lemma}[theorem]{Lemma}
\newtheorem{proposition}[theorem]{Proposition}
\newtheorem{corollary}[theorem]{Corollary}
\newtheorem{example}[theorem]{Example}
\newtheorem{definition}[theorem]{Definition}
\newtheorem{remark}[theorem]{Remark}
\title{Product structures and decompositions of $\mathbb{Z}_2\times\mathbb{Z}_2$-symmetric spaces}
\author{Jin Matsui}

\affils{Tokyo Metropolitan University}

\subjclass{53C35}

\keywords{$\Gamma$-symmetric space, product structure of $\Gamma$-symmetric space, $\Gamma$-subsymmetric structure, symmetric triad}

\authorname{Jin Matsui}
\address{Department of Mathematical sciences\\
Tokyo Metropolitan University\\
Minami-Osawa, Hachioji, Tokyo, 192-0364 Japan}
\email{matsui-jin@ed.tmu.ac.jp}
\support{JST SPRING, Grant Number JPMJSP2156 and by MIYAKO-MIRAI Project of Tokyo Metropolitan University.}
\begin{document}
\maketitle
\begin{abstract}
In this paper, we establish the decomposition and product theory of $\Gamma$-symmetric spaces, where $\Gamma$ is isomorphic to $\mathbb{Z}_2\times\mathbb{Z}_2$.
These results lead to the local classification of semisimple Riemannian $\mathbb{Z}_2\times\mathbb{Z}_2$-symmetric spaces of nontrivial type.
Furthermore, we identify notable phenomena specific to $\mathbb{Z}_2\times\mathbb{Z}_2$-symmetric spaces: the existence of distinct product structures and that of an example with the trivial infinitesimal isotropy representation.
\end{abstract}
\section{Introduction}
Riemannian symmetric spaces were introduced by E. Cartan and have been extensively studied (cf. \cite{helgason1979differential}).
For instance, classification of Riemannian symmetric spaces is completed. 
The method consists of first decomposing the universal covering of a Riemannian symmetric space into irreducible factors and then characterizing each component by reducing the problem to the theory of Lie algebras. 
As a generalization of symmetric spaces, R. Lutz introduced $\Gamma$-symmetric spaces (cf. \cite{Lutz}). 
These spaces are, roughly speaking, manifolds equipped with a transformation group at each point that is isomorphic to a finite abelian group $\Gamma$.
Symmetric spaces are $\mathbb{Z}_2$-symmetric spaces in this sense, where $\mathbb{Z}_2$ denotes the cyclic group of order $2$.
In the case of $\mathbb{Z}_2\times\mathbb{Z}_2$-symmetric spaces, Bahturin and Goze (cf. \cite{Bahturin&Goze}), and Kollross (cf. \cite{Kollross}) classified them that can correspond to simple Lie algebras. 
This means that irreducible $\mathbb{Z}_2\times\mathbb{Z}_2$-symmetric spaces are locally classified (see lemma~\ref{types of irr.-Z2Z2}) and the next natural question is that ``Is it possible to decompose $\mathbb{Z}_2\times\mathbb{Z}_2$-symmetric spaces into irreducible ones?''.
This paper provides a partial answer to this question: the semisimple spaces of nontrivial type are decomposable (see definition~\ref{def-nontrivial type}, theorem~\ref{decomposition of semisimple Lie algebra of nontrival type} and theorem~\ref{decomposition of Gamma-symmetric spaces}).
This is a generalization of the case of symmetric spaces.
Moreover, we define the notion of product spaces of $\mathbb{Z}_2\times\mathbb{Z}_2$-symmetric spaces via the consideration of the decomposition.
Then we observe phenomena that the product structures are not necessarily unique (examples~\ref{example of product structures of a symmetric space and a Z2Z2-symmetric space}, \ref{example of product structures of two Z2Z2-symmetric space} and \ref{flag manifolds}) and the infinitesimal isotropy representation can be trivial (example~\ref{An example of a non-perpendicular decomposition}).
This situation does not arise when we consider only symmetric spaces, which means that the notion of products and decompositions of $\Gamma$-symmetric spaces is more than a mere analogy to those of ordinary ones. 

In sections~\ref{sec. Riemannian Gamma-symmetric spaces}, \ref{sec. Gamma-symmetric triples} and \ref{sec. Gamma-symmetric Lie algebra}, we review $\Gamma$-symmetric spaces, $\Gamma$-symmetric triples and $\Gamma$-symmetric Lie algebras, in which the latter two are concepts corresponding to $\Gamma$-symmetric spaces in the context of Lie groups and Lie algebras, respectively.
Note that our discussion is often specialized to the cases $\Gamma\cong\mathbb{Z}_2$ or $\mathbb{Z}_2\times\mathbb{Z}_2$, which suffice for our current purposes.
Furthermore, the decomposition of semisimple Riemannian $\mathbb{Z}_2\times\mathbb{Z}_2$-symmetric Lie algebras of nontrivial type is mentioned in section~\ref{sec. decomposition of nontrivial type}.
In section~\ref{sec. the relationship between Riemannian Gamma-symmetric triples, Riemannian Gamma-symmetric Lie algebras and Riemannian Gamma-symmetric spaces}, we formulate the relationship between the three concepts introduced above, which enables the local classification of semisimple Riemannian $\mathbb{Z}_2\times\mathbb{Z}_2$-symmetric spaces of nontrivial type.
The notable observations regarding product structures and the trivial infinitesimal isotropy representation are provided in sections~\ref{sec:Product structures} and \ref{sec. decomposition of nontrivial type}, respectively.
The discussion in this paper follows the arguments in \cite{ise1991lie}.
\section{Riemannian $\Gamma$-symmetric spaces}
\label{sec. Riemannian Gamma-symmetric spaces}
Let $M$ be a smooth manifold. 
First of all, we set
\begin{equation*}
    F(\phi,M)=\{y\in M\mid\phi(y)=y\},\;\;F(\Phi,M)=\{y\in M\mid\phi(y)=y\;(\forall\phi\in\Phi)\}
\end{equation*}
for a diffeomorphism $\phi$ of $M$ and a subgroup $\Phi$ of the diffeomorphism group $\Diff(M)$ of $M$. 
A connected $C^{\infty}$-manifold $M$ is called a \textit{symmetric space} if for each point $x\in M$ there exists an involution $s_x$ of $M$ with the following properties: $s_y\circ s_z=s_{s_y(z)}\circ s_y$ holds for all $y,z\in M$, and $x$ is an isolated point on $F(s_x,M)$. 
Here, a diffeomorphism $\phi :M\to M$ is an involution if $\phi$ satisfies $\phi\circ\phi=\id_M$. 
The involution $s_x$ is called the \textit{symmetry} at the point $x$.
Symmetric spaces are $\mathbb{Z}_2$-symmetric spaces in the sense of definition~\ref{Gamma symmetric space}. 
\begin{definition}[cf. \cite{Lutz}]\label{Gamma symmetric space}
     Let $M$ be a connected $C^{\infty}$-manifold and $\Gamma$ a finite abelian group. A \textit{$\Gamma$-symmetric structure} on $M$ is a family $\mu=\{\mu^{\gamma}\}_{\gamma\in\Gamma}$ of smooth maps $\mu^{\gamma}:M\times M\to M$ which satisfies the following conditions:
\begin{enumerate}
    \item[(S1)] For each $x\in M$, the map
    \begin{equation*}
        \phi_x:\Gamma\to\Diff(M); \gamma\mapsto \gamma_x:=\mu^{\gamma}(x,\cdot)
    \end{equation*}
        is an injective homomorphism.
    \item[(S2)] Each $x\in M$ is isolated in the fixed point set $F(\phi_x(\Gamma), M)$.
    \item[(S3)] For each $x\in M$ and $\gamma\in\Gamma$, $\gamma_x$ satisfies
    \begin{equation}\label{symmetries are autmorphism}
        \gamma_x\circ\delta_y\circ\gamma_x^{-1}=\delta_{\gamma_x(y)}
    \end{equation}
for all $y,z\in M$ and for all $\delta\in\Gamma$.
\end{enumerate}
Then $(M,\Gamma,\mu)$ is called a \textit{$\Gamma$-symmetric space}.
\end{definition}
We will often write $(M,\Gamma)$ or $M$ for a $\Gamma$-symmetric space $(M,\Gamma,\mu)$.
We denote $\phi_x(\Gamma)$ by $\Gamma_x$ and call it the \textit{symmetric transformation group} at $x\in M$ and $\gamma_x\in\Gamma_x$ a \textit{symmetry} at $x\in M$. 
A Riemannian manifold $(M,g)$ is called a \textit{Riemannian $\Gamma$-symmetric space} if $M$ is a $\Gamma$-symmetric space and all symmetries at each point are isometries of $(M,g)$.
We write $(M,\Gamma,g)$ for the Riemannian $\Gamma$-symmetric space.
Now, let $(M,\Gamma,g)$ be a Riemannian $\Gamma$-symmetric space and $(M',\Gamma',g')$ a Riemannian $\Gamma'$-symmetric space.
\begin{definition}
    $(M,\Gamma,g)$ and $(M',\Gamma',g')$ are said to be \textit{isomorphic} if there exists a pair $(\varphi,\Phi)$ of an isometry $\varphi:M\to M'$ and a group isomorphism $\Phi:\Gamma\to\Gamma'$ such that 
    \begin{equation}\label{Definition of Gamma-isomorphism}
        \varphi\circ\gamma_x=\Phi(\gamma)_{\varphi(x)}\circ\varphi
    \end{equation}
    holds for each $x\in M$ and for each $\gamma\in\Gamma$.
    Then, we write $(M,\Gamma,g)\cong (M',\Gamma',g')$ and $(\varphi,\Phi)$ is called an \textit{isomorphism} from $(M,\Gamma,g)$ to $(M',\Gamma',g')$.
\end{definition}

Let $(\tilde{M},\tilde{g})$ be the Riemannian universal covering manifold of $(M,g)$. 
For $x\in M$ and $\tilde{x}\in \tilde{M}$ satisfying $\pi(\tilde{x})=x$, where $\pi$ denotes the natural projection
, let $\tilde{U}$ be a neighborhood of $\tilde{x}$ such that $\pi|_{\tilde{U}}:\tilde{U}\to\pi(\tilde{U})$ is an isometry.
Since $\Gamma$ is finite, $\tilde{V}:=\bigcap_{\gamma\in\Gamma}(\pi|_{\tilde{U}})^{-1}\left(\gamma_x(\pi(\tilde{U}))\cap\pi(\tilde{U})\right)\subset\tilde{U}$ is a neighborhood of $\tilde{x}$ satisfying $\Gamma_x(\pi(\tilde{V}))=\pi(\tilde{V})$.
Then, a local symmetry $\tilde{\gamma}_{\tilde{x}}$ at $\tilde{x}\in\tilde{M}$ can be defined by $\tilde{\gamma}_{\tilde{x}}:=(\pi|_{\tilde{V}})^{-1}\circ\gamma_x\circ\pi|_{\tilde{V}}$.
From \cite[corollary 6.4 Chapter VI]{Kobayashi-Nomizu}, $\tilde{\gamma}_{\tilde{x}}$ is uniquely extended to $\tilde{M}$.
Hence, $(\tilde{M},\Gamma,\tilde{g})$ is a Riemannian $\Gamma$-symmetric space.
We call $(\tilde{M},\Gamma,\tilde{g})$ the Riemannian universal covering $\Gamma$-symmetric space of $(M,\Gamma,g)$.

\begin{definition}
    $(M,\Gamma,g)$ and $(M',\Gamma',g')$ are said to be \textit{locally isomorphic} if the Riemannian universal covering $\Gamma$-symmetric space of $(M,\Gamma,g)$ and the Riemannian universal covering $\Gamma'$-symmetric space of $(M',\Gamma',g')$ are isomorphic. Then, we write $(M,\Gamma,g)\simeq (M',\Gamma',g')$.
\end{definition}

The automorphism group $\Aut(M,\Gamma,g)$ of $(M,\Gamma,g)$ is defined by 
\begin{equation}\label{definition of Aut}
    \Aut(M,\Gamma,g):=\{(\varphi,\Phi)\mid (\varphi,\Phi) \textrm{ is an isomorphism from $(M,\Gamma,g)$ to itself.}\},
\end{equation}
which is a subgroup of $I(M,g)\times \Aut(\Gamma)$.
Here, $I(M,g)$ is the isometry group of $(M,g)$ and $\Aut(\Gamma)$ is the autmorphim group of $\Gamma$.
Since $\Aut(\Gamma)$ is finite, the topology of $\Aut(\Gamma)$ is discrete. 
Therefore, it is proved that $\Aut(M,\Gamma,g)$ is a closed subgroup of $I(M,g)\times \Aut(\Gamma)$. 
This means that $\Aut(M,\Gamma,g)$ is a Lie group.
The equation (\ref{symmetries are autmorphism}) implies that $(\gamma_x,\id_{\Gamma})\in \Aut(M,\Gamma,g)$ for all $x\in M$ and for all $\gamma\in\Gamma$.
Using exactly the same proof as for \cite[Theorem~1]{ledgerObata1968}, we have the following theorem:
\begin{theorem}[\cite{ledgerObata1968}]\label{LedgerObata}
    The autmorphism group $\Aut(M,\Gamma,g)$ acts on $(M,\Gamma,g)$ transitively, that is, every Riemannian $\Gamma$-symmetric space is homogeneous. 
\end{theorem}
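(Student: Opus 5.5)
We follow the strategy of Ledger--Obata: show that the orbit of any point under the subgroup generated by symmetries is open, and conclude by connectedness of $M$. Let $G$ be the subgroup of $\Aut(M,\Gamma,g)$ generated by all $(\gamma_x,\id_\Gamma)$ with $x\in M$, $\gamma\in\Gamma$. By (\ref{symmetries are autmorphism}) these elements do lie in $\Aut(M,\Gamma,g)$. It suffices to prove that every $G$-orbit in $M$ is open. Since the orbits partition $M$, each orbit is then also closed, and connectedness of $M$ gives a single orbit.

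Fix $x\in M$ and consider the smooth map
\[
F:M\to M,\qquad F(y)=\prod_{\gamma\in\Gamma}\gamma_y(x),
\]
or more conveniently a single well-chosen composite built from symmetries, such as $y\mapsto \gamma_y(x)$ for fixed $\gamma$. This map is smooth in $y$ because $\mu^\gamma$ is smooth on $M\times M$, and its image lies in the orbit $G\cdot x$. The aim is to show that some composite of such maps, evaluated near $y=x$, has surjective differential at $x$. The inverse function theorem then shows that $G\cdot x$ contains a neighborhood of $x$. By homogeneity of the argument, applied at every point of the orbit, the orbit is open.

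The key linear-algebra step uses (S2). The linear isometries $(d\gamma_x)_x$ of $T_xM$ form a finite abelian group, and isolation of $x$ in $F(\Gamma_x,M)$ implies that no nonzero vector of $T_xM$ is fixed by all of them. To see this, use the exponential map of $g$: a common fixed vector $v$ would give a curve $\exp(tv)$ of common fixed points, since the $\gamma_x$ are isometries fixing $x$. For $y$ near $x$ and $\gamma\in\Gamma$, set $f_\gamma(y)=\gamma_y(x)$. Since $\gamma_x(x)=x$, a computation gives $d f_\gamma|_x=\id-(d\gamma_x)_x$, because $\gamma_y(x)$ is obtained by swapping the roles of the two arguments of $\mu^\gamma$ and using $\mu^\gamma(y,y)=y$. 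The equality $\mu^\gamma(y,y)=y$ follows from (S2)/(S3): $\gamma_y$ fixes $y$ by the defining axiom of a symmetric structure, and one should check that this is part of the setup, or else derive it from (S3) with $x=y$.

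The image of $\id-(d\gamma_x)_x$ is the orthogonal complement of the fixed space of $(d\gamma_x)_x$. Summing over $\gamma$, these images span the orthogonal complement of the common fixed space, which is $0$. So $\sum_\gamma \operatorname{Im}(\id-(d\gamma_x)_x)=T_xM$. To realize the sum by a single map, choose generators $\gamma^1,\dots,\gamma^k$ of $\Gamma$ and compose: set $\Psi(y_1,\dots,y_k)=\gamma^1_{y_1}\circ\cdots\circ\gamma^k_{y_k}(x)$. At $y_i=x$ its differential is $\sum_i(\id-(d\gamma^i_x)_x)$ up to the isometries $d\gamma_x$, which preserve the relevant subspaces. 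This differential is therefore surjective, and the image of $\Psi$, which lies in $G\cdot x$, contains a neighborhood of $x$.

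The main obstacle I expect is the differential computation. It requires justifying $\gamma_y(y)=y$ and the formula $d_y(\gamma_y(x))|_{y=x}=\id-(d\gamma_x)_x$. For the latter, differentiate the identity $\gamma_y(y)=y$ in $y$, which gives $\partial_1\mu^\gamma+\partial_2\mu^\gamma=\id$ at $(x,x)$, with $\partial_2\mu^\gamma=(d\gamma_x)_x$.
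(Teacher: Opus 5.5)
Your argument is correct, but it is not the route the paper takes. The paper writes no proof of its own. It says Ledger--Obata's proof of their Theorem~1 carries over verbatim, and that argument is global and metric rather than infinitesimal. It goes as follows:
\begin{itemize}
\item Let $H$ be the closure in $I(M,g)$ of the group generated by all symmetries. It is a Lie group acting properly, so its orbits are closed.
\item Suppose $H\cdot p\neq M$. Choose $r\notin H\cdot p$ with a unique nearest point $p_0\in H\cdot p$, joined to it by a unique minimizing segment (e.g.\ the midpoint of a minimizing segment from the orbit to a nearby outside point).
\item Each $\gamma_r$ lies in $H$, preserves $H\cdot p$ and fixes $r$. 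Hence it fixes $p_0$, and therefore fixes the whole segment from $r$ to $p_0$ pointwise. This contradicts (S2).
\end{itemize}
It transfers \emph{exactly} because it only uses that all of $\Gamma_r$ fix a common segment ending at $r$. It never needs a single $\gamma_r$ to have $r$ as an isolated fixed point.

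Your proof instead uses three ingredients:
\begin{itemize}
\item the smoothness of $\mu^\gamma$ in the base point, which is built into the definition since $\mu^\gamma$ is smooth on $M\times M$;
\item the identity $\partial_1\mu^\gamma(x,x)=\id-(d\gamma_x)_x$;
\item the fact that for orthogonal maps $A_1,\dots,A_k$ with no common fixed vector, the images of $\id-A_i$ span $T_xM$.
\end{itemize}
Composing over generators is exactly the extra ingredient your route needs. For a proper space, $(d\gamma_x)_x$ is the identity on a nonzero eigenspace for every $\gamma\neq e$, so the classical one-symmetry argument with $y\mapsto\gamma_y(x)$ breaks down. In exchange, your proof avoids Myers--Steenrod, closures and nearest-point geometry. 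It also gives more: the group generated by the symmetries itself, not just its closure, is transitive, and a whole neighbourhood of $x$ consists of points $\gamma^1_{y_1}\circ\cdots\circ\gamma^k_{y_k}(x)$. The metric proof, for its part, needs no smooth dependence of $\gamma_x$ on $x$ at all.

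A few things to tidy:
\begin{itemize}
\item The fact $\gamma_x(x)=x$ is contained in (S2), since being isolated in $F(\Gamma_x,M)$ presupposes lying in it. Your fallback via (S3) with $y=x$ does not give it: commutativity of $\Gamma$ only yields $\delta_{\gamma_x(x)}=\delta_x$.
\item When you pass from the sum over all of $\Gamma$ to generators, say explicitly why no information is lost. Because $\gamma\mapsto(d\gamma_x)_x$ is a homomorphism, a vector fixed by every $(d\gamma^i_x)_x$ is fixed by every $(d\gamma_x)_x$. Hence $\bigcap_i\ker(\id-A_i)=\{0\}$, and the image of $d\Psi$, namely $\sum_i A_1\cdots A_{i-1}\operatorname{Im}(\id-A_i)$, has zero orthogonal complement.
\item Since $\Psi\colon M^k\to M$, the tool at the end is the local submersion theorem, not the inverse function theorem.
\item The opening map $\prod_\gamma\gamma_y(x)$ is meaningless and should be deleted.
\end{itemize}
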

The identity component $\Aut_0(M,\Gamma,g)$ of $\Aut(M,\Gamma,g)$ is also a Lie group. 
Since the topology of $\Aut(\Gamma)$ is discrete, we can naturally regard $\Aut_0(M,\Gamma,g)$ as a subgroup of $I(M,g)$.
From theorem~\ref{LedgerObata} and the connectivity of $M$, we immediately have the next corollary:
\begin{corollary}\label{actstransitive}
    The connected Lie group $\Aut_0(M,\Gamma,g)$ acts on $(M,\Gamma,g)$ transitively. 
\end{corollary}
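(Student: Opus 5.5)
The plan is the standard orbit argument: show that every orbit of $G_0:=\Aut_0(M,\Gamma,g)$ is open in $M$, and then use the connectivity of $M$. Let $G:=\Aut(M,\Gamma,g)$, which is a Lie group by the remarks preceding theorem~\ref{LedgerObata}, and let $G_0$ be its identity component.

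First I would show that $G$ has only countably many connected components. The group $G$ is a closed subgroup of $I(M,g)\times\Aut(\Gamma)$. Since $M$ is a connected (second countable) manifold, the isometry group $I(M,g)$ has countably many components by the Myers--Steenrod theorem, and $\Aut(\Gamma)$ is finite. Hence $G/G_0$ is countable, because a closed subgroup of a second countable Lie group is again second countable. Now fix $x\in M$. By theorem~\ref{LedgerObata}, $M=G\cdot x=\bigcup_{[h]\in G/G_0} hG_0\cdot x$. Since $G_0$ is normal in $G$, we have $hG_0\cdot x=G_0\cdot(h\cdot x)$, so $M$ is a countable union of $G_0$-orbits.

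Next I would use that each $G_0$-orbit $G_0\cdot y$ is the image of the injective immersion $G_0/(G_0)_y\to M$; here the isotropy group $(G_0)_y$ is closed, so the quotient is a manifold. Each such orbit is therefore an immersed submanifold of constant dimension $d_y$.

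The key step is to show that $d_y=\dim M$ for every $y$. The orbits $G_0\cdot(h\cdot x)$ are all translates $h(G_0\cdot x)$ under the diffeomorphisms $h\in G$, so they all have the same dimension $d$. If $d<\dim M$, each orbit would be a countable union of compact subsets of embedded submanifolds of positive codimension, and hence of measure zero. Then $M$ would be a countable union of measure-zero sets, which contradicts Baire's theorem (or Sard's theorem). So $d=\dim M$. Each orbit is then open, because an injective immersion between manifolds of equal dimension is a local diffeomorphism.

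Finally, $M$ is the disjoint union of these open $G_0$-orbits. Each orbit is also closed, since its complement is a union of the other orbits. Because $M$ is connected, there is exactly one orbit, i.e.\ $G_0$ acts transitively. I expect the main obstacle to be justifying the countability of $G/G_0$. This holds because $G$ is second countable as a closed subgroup of $I(M,g)\times\Aut(\Gamma)$, with the compact-open topology on $I(M,g)$ being second countable.
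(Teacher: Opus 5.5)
Your proof is correct and follows essentially the paper's route. The paper derives the corollary in one line from theorem~\ref{LedgerObata} and the connectivity of $M$. Your argument fills in the standard details left implicit there: countability of $G/G_0$ via second countability of $I(M,g)\times\Aut(\Gamma)$, and the Baire/Sard dimension argument showing every $G_0$-orbit is open, so that connectivity forces a single orbit.
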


At the end of this section, we define the notion of subsymmetric structure:
\begin{definition}
    Let $\Sigma$ be a subgroup of $\Gamma$. 
    A $\Gamma$-symmetric space $(M,\Gamma,\mu)$ has a \textit{$\Sigma$-subsymmetric structure} $\mu^{\Sigma}=\{\mu^{\gamma}\}_{\gamma\in\Sigma}$ on $M$ if a subfamily $\mu^{\Sigma}$ of $\mu$ is a $\Sigma$-symmetric structure on $M$. 
    Furthermore, a $\Sigma$-subsymmetric structure is called \textit{trivial} if $\Sigma=\Gamma$. 
    A $\Gamma$-symmetric space $(M,\Gamma,\mu)$ is \textit{proper} if every $\Sigma$-subsymmetric structure of $\mu$ is trivial.
\end{definition}
\section{Product structures of $\Gamma$-symmetric spaces}\label{sec:Product structures}
Let $\Gamma_i$, $i=1,2$, be finite abelian groups and $(M_i,\Gamma_i,\mu_i)$ $\Gamma_i$-symmetric spaces.
We write $\Gamma:=\Gamma_1\times\Gamma_2$ and $M:=M_1\times M_2$ and define a $\Gamma$-symmetric structure $\mu=\{\mu^{(\gamma_1,\gamma_2)}:M\times M\to M\}$, also denoted by $\mu_1\times\mu_2$, on $M$ by 
\begin{eqnarray*}
\mu^{(\gamma_1,\gamma_2)}((x_1,x_2),(y_1,y_2)):=(\mu_1^{\gamma_1}(x_1,y_1),\mu_2^{\gamma_2}(x_2,y_2))
\end{eqnarray*}
for all $(x_1,x_2),(y_1,y_2)\in M$ and for all $(\gamma_1,\gamma_2)\in\Gamma$.
Then, a $\Sigma$-subsymmetric structure $\mu^{\Sigma}$ of $\mu_1\times\mu_2$ is called a \textit{product structure} of $\mu_1$ and $\mu_2$ and $(M,\Sigma,\mu^{\Sigma})$ is called a \textit{product space} of $(M_1,\Gamma_1,\mu_1)$ and $(M_2,\Gamma_2,\mu_2)$.
Clearly, $\mu_1\times\mu_2$ is one of the product structures, however, $(M,\Gamma,\mu_1\times\mu_2)$ is generally not proper. 
Examples of proper product structures are given in examples~\ref{example of the product structure of two symmetric spaces}, \ref{example of product structures of a symmetric space and a Z2Z2-symmetric space} and \ref{example of product structures of two Z2Z2-symmetric space}.
In addition, let $(M_i,\Gamma_i,g_i)$, $i=1,2$, be Riemannian $\Gamma_i$-symmetric spaces and $(M_1\times M_2,\Sigma)$ a product space of $(M_1,\Gamma_1)$ and $(M_2,\Gamma_2)$. 
Then, all symmetries of $(M_1\times M_2,\Sigma)$ are isometries with respect to $g_1\oplus g_2$. 
We also call $(M_1\times M_2,\Sigma,g_1\oplus g_2)$ a \textit{product space} of $(M_1,\Gamma_1,g_1)$ and $(M_2,\Gamma_2,g_2)$.

\begin{example}\label{example of the product structure of two symmetric spaces}[The proper product space of two symmetric spaces]
When $M_1$ and $M_2$ are symmetric spaces, $M_1\times M_2$ has the product $\mathbb{Z}_2\times\mathbb{Z}_2$-symmetric structure $\mu_1\times\mu_2$. 
Then, $(M_1\times M_2,\mathbb{Z}_2\times\mathbb{Z}_2,\mu_1\times\mu_2)$ has the $\mathbb{Z}_2$-subsymmetric structure $\mu^{\mathbb{Z}_2}$ uniquely. 
$(M_1\times M_2,\mathbb{Z}_2,\mu^{\mathbb{Z}_2})$ has been known as the product symmetric space of the symmetric spaces $M_1$ and $M_2$.
\end{example}
\begin{example}\label{example of product structures of a symmetric space and a Z2Z2-symmetric space}[Proper product spaces of a symmetric space and a $\mathbb{Z}_2\times\mathbb{Z}_2$-symmetric space]
Suppose that $\Gamma_1\cong\mathbb{Z}_2$, $\Gamma_2\cong\mathbb{Z}_2\times\mathbb{Z}_2$ and $(M_2,\Gamma_2)$ is proper. 
We write $\Gamma_1=\langle\gamma_1\rangle$ and $\Gamma_2=\langle\gamma_2, \delta_2\rangle$ and then we have
\[
\Gamma=\langle(\gamma_1,\id_{M_2}),(\id_{M_1},\gamma_2),(\id_{M_1},\delta_2)\rangle\cong(\mathbb{Z}_2)^3.
\]
We will find a subgroup $\Sigma$ of $\Gamma$ such that
 $(M_1\times M_2,\Sigma,\mu^{\Sigma})$ is proper.
 If $\Sigma\cong\mathbb{Z}_2$, then $\mu^{\Sigma}$ does not satisfy (S2) since $(M_2,\Gamma_2)$ is proper. 
 Assume that $\Sigma\cong\mathbb{Z}_2\times\mathbb{Z}_2$.
We consider choosing two elements from $\Gamma$ generating $\Sigma$.
Since $M_2$ is proper, if $(M_1\times M_2,\Sigma,\mu^{\Sigma})$ satisfies (S2), then the two hold the followings conditions: (a) For the $M_1$ components, at least one of the two elements contains $\gamma_1$; (b) For the $M_2$ components, the two elements do not contain $\id_{M_2}$; (c) The $M_2$ components of the two elements are mutually distinct.
There exist six elements satisfying (b) : $(\id_{M_1},\gamma_2)$, $(\id_{M_1},\delta_2)$, $(\id_{M_1},\gamma_2\delta_2)$, $(\gamma_1,\gamma_2)$, $(\gamma_1,\delta_2)$ and $(\gamma_1,\gamma_2\delta_2)$.
We will choose two of these six, denoted by $\alpha$ and $\beta$, such that they satisfy (a) and (c). 
$\alpha_{(i)}$ (resp. $\beta_{(i)}$) denotes the $M_i$ component of $\alpha$ (resp. $\beta$). 
From (a)-(c), if $\alpha_{(1)}=\id_{M_1}$ (resp. $\beta_{(1)}=\id_{M_1}$), then 
$\beta$ and $\alpha\beta$ (resp. $\alpha$ and $\alpha\beta$) include neither $\id_{M_1}$ nor $\id_{M_2}$.
Hence, the subgroup $\Sigma$ generated by $\alpha$ and $\beta$ contains generators that satisfy condition (d), namely, they include neither $\id_{M_1}$ nor $\id_{M_2}$.
The group $\Gamma$ has three elements satisfying (d): $(\gamma_1,\gamma_2)$, $(\gamma_1,\delta_2)$ and $(\gamma_1,\gamma_2\delta_2)$. 
Thus by choosing two with property (c) out of the three, we 
obtain three groups as candidates for $\Sigma$:
\begin{equation*}
    \Sigma_1=\langle(\gamma_1,\gamma_2),(\gamma_1,\delta_2)\rangle, \;\;
    \Sigma_2=\langle(\gamma_1,\gamma_2),(\gamma_1,\gamma_2\delta_2)\rangle 
    \;\;\mathrm{and}\;\;
    \Sigma_3=\langle(\gamma_1,\delta_2),(\gamma_1,\gamma_2\delta_2)\rangle.
\end{equation*}
We check $\Sigma_1$ satisfies (S2). 
Write $M=M_1\times M_2$ and take the origin $o=(o_1,o_2)\in M$. 
The differentials of ${\theta_i}_{o_i}\in{\Gamma_i}_{o_i}$ at $o_i$, denoted by ${\theta_i}_{*}$,  give the eigendecomposition of $T_{o_i}M_i$ and then they provide the following decomposition of $T_{o}M$:
\begin{equation}\label{example_of_decomposition_in_the_case_Z2_and_Z2Z2}
T_oM =T_{o_1}M_1^{-\gamma_1}\oplus \left(T_{o_2}M_2^{(+\gamma_2,-\delta_2)}\oplus T_{o_2}M_2^{(-\gamma_2,+\delta_2)}\oplus T_{o_2}M_2^{(-\gamma_2,-\delta_2)}\right),
\end{equation}
where 
\begin{eqnarray*}
    &&T_{o_1}M_1^{-\gamma_1}=\{v\in T_{o_1}M_1\mid {\gamma_1}_{*}(v)=-v\}, \\
    &&T_{o_2}M_2^{(+\gamma_2,-\delta_2)}=\{v\in T_{o_2}M_2\mid {\gamma_2}_{*}(v)=v, {\delta_2}_{*}(v)=-v\}, \\
    &&T_{o_2}M_2^{(-\gamma_2,+\delta_2)}=\{v\in T_{o_2}M_2\mid {\gamma_2}_{*}(v)=-v, {\delta_2}_{*}(v)=v\}, \\
    &&T_{o_2}M_2^{(-\gamma_2,-\delta_2)}=\{v\in T_{o_2}M_2\mid {\gamma_2}_{*}(v)=-v, {\delta_2}_{*}(v)=-v\} .
\end{eqnarray*}
Vectors of $T_oM$ fixed by both $(\gamma_1,\gamma_2)$ and $(\gamma_1,\delta_2)$ are only zero vector, which means that $\Sigma_1$ satisfies (S2).
Similarly, we can show that $\Sigma_2$ and $\Sigma_3$ also satisfy (S2).
Therefore, $M_1\times M_2$ has three $\Sigma_i$-symmetric structures $\mu^{\Sigma_i}$ and each product space $(M_1\times M_2,\Sigma_i,\mu^{\Sigma_i})$ is proper.
Finally, if $\Sigma\cong(\mathbb{Z}_2)^3$, that is, $\Sigma=\Gamma$, then $(M_1\times M_2,\Sigma,\mu_1\times\mu_2)$ is not proper.
\end{example}

\begin{example}\label{example of product structures of two Z2Z2-symmetric space}[Proper product spaces of two $\mathbb{Z}_2\times\mathbb{Z}_2$-symmetric spaces]
Suppose that $\Gamma_i\cong\mathbb{Z}_2\times\mathbb{Z}_2$ and $(M_i,\Gamma_i)$ is proper. 
Write $\Gamma_1=\langle\gamma_1,\delta_1\rangle$ and $\Gamma_2=\langle\gamma_2, \delta_2\rangle$ and then we have
\[
\Gamma=\langle(\gamma_1,\id_{M_2}),(\delta_1,\id_{M_2}),(\id_{M_1},\gamma_2),(\id_{M_1},\delta_2)\rangle\cong(\mathbb{Z}_2)^4.
\]
We will find $\Sigma$ such that $(M_1\times M_2,\Sigma)$ is proper.
Similarly to example~\ref{example of product structures of a symmetric space and a Z2Z2-symmetric space}, we first assume that $\Sigma\cong\mathbb{Z}_2\times\mathbb{Z}_2$.
We choose two elements of $\Gamma$ generating $\Sigma$.
Since $(M_1,\Gamma_1)$ and $(M_2,\Gamma_2)$ are proper, if $(M_1\times M_2,\Sigma)$ satisfies (S2), then the two hold the following conditions: (c$'$) For $i=1,2$, the $M_i$ components of the two are mutually distinct; (d$'$) The two elements contain neither $\id_{M_1}$ nor $\id_{M_2}$. 
The group $\Gamma$ has nine elements with property (d$'$): 
$(\gamma_1,\gamma_2)$, $(\gamma_1,\delta_2)$, $(\gamma_1,\gamma_2\delta_2)$, $(\delta_1,\gamma_2)$, $(\delta_1,\delta_2)$, $(\delta_1,\gamma_2\delta_2)$, $(\gamma_1\delta_1,\gamma_2)$, $(\gamma_1\delta_1,\delta_2)$ and $(\gamma_1\delta_1,\gamma_2\delta_2)$. 
For each of them, we can choose four elements satisfying (c$'$). 
Since the order of selection does not matter, the number of the pairs is eighteen. 
On the other hand, for each $\Sigma$ the number of such pairs generating $\Sigma$ is three.
Hence we have six groups as candidates for $\Sigma$, up to the choice of the generators:
\begin{equation}\label{6 group Sigma}
\begin{array}{ll}
    \Sigma_1:=\langle(\gamma_1,\gamma_2),(\delta_1,\delta_2)\rangle, 
    &\Sigma_2:=\langle(\gamma_1,\gamma_2),(\delta_1,\gamma_2\delta_2)\rangle, \\
    \Sigma_3:=\langle(\gamma_1,\delta_2),(\delta_1,\gamma_2)\rangle,
    &\Sigma_4:=\langle(\gamma_1,\delta_2),(\delta_1,\gamma_2\delta_2)\rangle, \\
    \Sigma_5:=\langle(\gamma_1,\gamma_2\delta_2),(\delta_1,\gamma_2)\rangle,
    &\Sigma_6:=\langle(\gamma_1,\gamma_2\delta_2),(\delta_1,\delta_2)\rangle.
\end{array}
\end{equation}
Conversely, they satisfy (S2), that is, $\mu^{\Sigma_i}$, $i=1,\ldots,6$, are $\Sigma_i$-symmetric structures on $M$.
As we see later, $\mu^{\Sigma_i}$ are generally not isomorphic. 
Next, assume that $\Sigma\cong(\mathbb{Z}_2)^3$.
If $\mu^{\Sigma}$ satisfies (S2), then there exist two elements of $\Sigma$ satisfying (c$'$) and (d$'$).
This fact is proved as follows:
Let $\alpha$, $\beta$ and $\zeta$ be generators of $\Sigma$.
We can assume that $\alpha_{(1)}\neq\id_{M_1}$, $\beta_{(1)}\neq\id_{M_1}$ and $\alpha_{(1)}\neq\beta_{(1)}$.
Now, we need to consider the three cases:  
\begin{enumerate}
    \item $\alpha_{(2)}=\beta_{(2)}$, 
    \item $\alpha_{(2)}\neq\beta_{(2)}$, $\alpha_{(2)}\neq\id_{M_2}$ and $\beta_{(2)}=\id_{M_2}$,
    \item $\alpha_{(2)}\neq\beta_{(2)}$, $\alpha_{(2)}\neq\id_{M_2}$ and $\beta_{(2)}\neq\id_{M_2}$. 
\end{enumerate}
In the first case, we obtain $\alpha_{(2)}=\beta_{(2)}\neq\id_{M_2}$, $\zeta_{(2)}\neq\id_{M_2}$ and $\alpha_{(2)}\neq\zeta_{(2)}$, and then the pairs $\{\alpha,\zeta\}$, $\{\beta,\zeta\}$ or $\{\alpha,\alpha\beta\zeta\}$ satisfy (c$'$) and (d$'$).
In the second case, we obtain $\zeta_{(2)}\neq\id_{M_2}$ and $\alpha_{(2)}\neq\zeta_{(2)}$, and then the pairs $\{\alpha,\zeta\}$ or $\{\alpha,\beta\zeta\}$ satisfy (c$'$) and (d$'$).
In the third case, the pair $\{\alpha,\beta\}$ satisfies them.
Hence, $\Sigma$ has two elements satisfying (c$'$) and (d$'$), that is, it has at least one of the subgroups $\Sigma_1$, $\Sigma_2$, $\ldots$, $\Sigma_5$ or $\Sigma_6$ in (\ref{6 group Sigma}), which means that $(M,\Sigma)$ is not proper.
Finally, if $\Sigma\cong(\mathbb{Z}_2)^4$, that is, $\Sigma=\Gamma$, then $(M,\Sigma,\mu_1\times\mu_2)$ is not proper.
\end{example}

We prepare to provide examples that $\mu^{\Sigma_1}$, $\mu^{\Sigma_2}$, $\ldots$, $\mu^{\Sigma_6}$ are pairwise non-isomorphic.
Let $(M_i,\Gamma_i,g_i)$ ($\Gamma_i\cong\mathbb{Z}_2\times\mathbb{Z}_2$, $i=1,2$) be Riemannian $\Gamma_i$-symmetric spaces and assume that $(M_1,\Gamma_1,g_1)\ncong(M_2,\Gamma_2,g_2)$.
We use the notation from examples~\ref{example of product structures of a symmetric space and a Z2Z2-symmetric space} and \ref{example of product structures of two Z2Z2-symmetric space}.
Similarly to the decomposition (\ref{example_of_decomposition_in_the_case_Z2_and_Z2Z2}), 
we can decompose $T_{o}M$:
\begin{equation}\label{eigendecomposition in example}
\begin{split}
    T_oM =&(T_{o_1}M_1^{(+\gamma_1,-\delta_1)}\oplus T_{o_1}M_1^{(-\gamma_1,+\delta_1)}\oplus T_{o_1}M_1^{(-\gamma_1,-\delta_1)}) \\
    &\oplus (T_{o_2}M_2^{(+\gamma_2,-\delta_2)}\oplus T_{o_2}M_2^{(-\gamma_2,+\delta_2)}\oplus T_{o_2}M_2^{(-\gamma_2,-\delta_2)}).
\end{split}
\end{equation}
We focus on $\Sigma_1$ and $\Sigma_2$.
If $(M,\Sigma_1,g)$ and $(M,\Sigma_2,g)$ are isomorphic, where $g=g_1\oplus g_2$, then there exist $\varphi\in I(M,g)$ and $\Phi\in \Isom(\Sigma_1,\Sigma_2)$ such that $\varphi\circ\gamma_x=\Phi(\gamma)_{\varphi(x)}\circ\varphi$ holds for all $\gamma\in\Sigma_1$ and for all $x\in M$, where
\begin{eqnarray*}
\Isom(\Sigma_1,\Sigma_2) 
&:=& \{\Phi:\Sigma_1\to\Sigma_2;\mathrm{a\;group\;isomorphism}\} \\
&=&
\left\{
\begin{aligned}
&\Phi_1: (\gamma_1,\gamma_2)\mapsto(\gamma_1,\gamma_2),\;
      (\delta_1,\delta_2)\mapsto(\delta_1,\gamma_2\delta_2), \\
&\Phi_2: (\gamma_1,\gamma_2)\mapsto(\gamma_1,\gamma_2),\;
      (\delta_1,\delta_2)\mapsto(\gamma_1\delta_1,\delta_2), \\
&\Phi_3: (\gamma_1,\gamma_2)\mapsto(\delta_1,\gamma_2\delta_2),\;
      (\delta_1,\delta_2)\mapsto(\gamma_1,\gamma_2), \\
&\Phi_4: (\gamma_1,\gamma_2)\mapsto(\delta_1,\gamma_2\delta_2),\;
      (\delta_1,\delta_2)\mapsto(\gamma_1\delta_1,\delta_2), \\
&\Phi_5: (\gamma_1,\gamma_2)\mapsto(\gamma_1\delta_1,\delta_2),\;
      (\delta_1,\delta_2)\mapsto(\gamma_1,\gamma_2), \\
&\Phi_6: (\gamma_1,\gamma_2)\mapsto(\gamma_1\delta_1,\delta_2),\;
      (\delta_1,\delta_2)\mapsto(\delta_1,\gamma_2\delta_2)
\end{aligned}
\right\}.
\end{eqnarray*}
Since $(M,\Sigma_2,g)$ is homogeneous, we can assume that $\varphi(o)=o$. 
 Then, $\varphi$ preserves the eigendecomposition (\ref{eigendecomposition in example}).
For example, when $\Phi=\Phi_1$, we obtain 
\begin{equation}\label{used when give example not isomorphic}
  \begin{split}
    &\varphi_*(T_{o_1}M_1^{(+\gamma_1,-\delta_1)})=T_{o_1}M_1^{(+\gamma_1,-\delta_1)},\quad 
    \varphi_*(T_{o_1}M_1^{(-\gamma_1,+\delta_1)})=T_{o_1}M_1^{(-\gamma_1,+\delta_1)}, \\
    &\varphi_*(T_{o_1}M_1^{(-\gamma_1,-\delta_1)})=T_{o_1}M_1^{(-\gamma_1,-\delta_1)}, \quad
    \varphi_*(T_{o_2}M_2^{(+\gamma_2,-\delta_2)})=T_{o_2}M_2^{(+\gamma_2,-\delta_2)}, \\
   &\varphi_*(T_{o_2}M_2^{(-\gamma_2,+\delta_2)})=T_{o_2}M_2^{(-\gamma_2,-\delta_2)}  \;\mathrm{and}\;
    \varphi_*(T_{o_2}M_2^{(-\gamma_2,-\delta_2)})=T_{o_1}M_1^{(-\gamma_2,+\delta_2)}
  \end{split}   
\end{equation}
since $(M_1,\Gamma_1,g_1)\ncong(M_2,\Gamma_2,g_2)$.
This fact (\ref{used when give example not isomorphic}) is used in the next example:

\begin{example}[Flag manifolds]\label{flag manifolds}
Let $r_1$, $r_2$, $r_3$, $r_4$ and $l$ be nonnegative integers with $r_1 + r_2 + r_3 + r_4 = l$ and $\mathcal{M}(p,q)$ the set of all $p\times q$ real matrices.
Define $\mathfrak{g}:=\mathfrak{so}(l)$.
All elements $X$ of $\mathfrak{so}(l)$ are written as
\[
X=
\left(
\begin{array}{c|c|c|c}
X_1 & A_1 & B_1 & C_1 \\
\hline
-{}^{t}\!A_1 & X_2 & C_2 & B_2 \\
\hline
-{}^{t}\!B_1 & -{}^{t}C_2 & X_3 & A_2 \\
\hline
-{}^{t}C_1 & -{}^{t}\!B_2 & -{}^{t}\!A_2 & X_4
\end{array}
\,\right)
, \mathrm{where}
\begin{array}{l}
A_1 \in\mathcal{M}(r_1,r_2), B_1 \in\mathcal{M}(r_1,r_3), \\
C_1 \in\mathcal{M}(r_1,r_4), C_2 \in\mathcal{M}(r_2,r_3), \\ 
B_2 \in\mathcal{M}(r_2,r_4), A_2 \in\mathcal{M}(r_3,r_4) \\
\mathrm{and}\; X_i \in \mathfrak{so}(r_i), i=1,\dots,4. 
\end{array}
\]
Consider the following autmorphisms $\sigma$, $\tau\in\Aut(\mathfrak{g})$:
\begin{eqnarray*}
    \sigma(X):=S_1XS_1^{-1}, \quad \tau(X):=S_2XS_2^{-1},
\end{eqnarray*}
where 
\[
S_1:=
\left(
\begin{array}{cccc}
I_{r_1} & 0 & 0 & 0 \\
0 & I_{r_2} & 0 & 0 \\
0 & 0 & -I_{r_3} & 0 \\
0 & 0 & 0 & -I_{r_4}
\end{array}
\right),
\quad
S_2:=
\left(
\begin{array}{cccc}
I_{r_1} & 0 & 0 & 0 \\
0 & -I_{r_2} & 0 & 0 \\
0 & 0 & I_{r_3} & 0 \\
0 & 0 & 0 & -I_{r_4}
\end{array}
\right)
\]
and $I_{r_i}$ denotes the identity matrix of size $r_i$.
Then we obtain $\langle\sigma,\tau\rangle\cong\mathbb{Z}_2 \times \mathbb{Z}_2$ and $\sigma$, $\tau$ give the eigendecomposition of $\mathfrak{so}(l)$:
\[
\mathfrak{g}=\mathfrak{g}^{(+1,+1)}\oplus
\mathfrak{g}^{(+1,-1)}\oplus
\mathfrak{g}^{(-1,+1)}\oplus
\mathfrak{g}^{(-1,-1)},
\]
where 
for $\theta\in\Aut(\mathfrak{g})$ the eigenspaces $\mathfrak{g}^{\theta}$ and $\mathfrak{g}^{-\theta}$ are given by
\begin{equation}\label{eigenspace} 
    \mathfrak{g}^{\theta}=\{X\in\mathfrak{g}\mid \theta X=X\}, \quad \mathfrak{g}^{-\theta}=\{X\in\mathfrak{g}\mid \theta X=-X\} 
\end{equation}
and the simultaneous eigenspaces are defined by
\begin{eqnarray}\label{simultaneous eigenspace} 
\begin{aligned}
&\mathfrak{g}^{(+1,+1)}=\mathfrak{g}^{\sigma}\cap\mathfrak{g}^{\tau}, \;\;
\mathfrak{g}^{(+1,-1)}=\mathfrak{g}^{\sigma}\cap\mathfrak{g}^{-\tau}, \\
&\mathfrak{g}^{(-1,+1)}=\mathfrak{g}^{-\sigma}\cap\mathfrak{g}^{\tau}
\textrm{\;\;and\;\;}
\mathfrak{g}^{(-1,-1)}=\mathfrak{g}^{-\sigma}\cap\mathfrak{g}^{-\tau}. 
\end{aligned}
\end{eqnarray}
All elements of each simultaneous eigenspace $\mathfrak{g}^{(+1,+1)}$, $
\mathfrak{g}^{(+1,-1)}$, $
\mathfrak{g}^{(-1,+1)}$ and $\mathfrak{g}^{(-1,-1)}$ are expressed as 
\[
\left(
\begin{array}{cccc}
X_1 & 0 & 0 & 0 \\
0 & X_2 & 0 & 0 \\
0 & 0 & X_3 & 0 \\
0 & 0 & 0 & X_4
\end{array}
\right),
\qquad
\left(
\begin{array}{cccc}
0 & A_1 & 0 & 0 \\
-{}^{t}\!A_1 & 0 & 0 & 0 \\
0 & 0 & 0 & A_2 \\
0 & 0 & -{}^{t}\!A_2 & 0
\end{array}
\right),
\]
\[
\left(
\begin{array}{cccc}
0 & 0 & B_1 & 0 \\
0 & 0 & 0 & B_2 \\
-{}^{t}\!B_1 & 0 & 0 & 0 \\
0 & -{}^{t}\!B_2 & 0 & 0
\end{array}
\right)
\;\mathrm{and}\;
\left(
\begin{array}{cccc}
0 & 0 & 0 & C_1 \\
0 & 0 & C_2 & 0 \\
0 & -{}^{t}C_2 & 0 & 0 \\
-{}^{t}C_1 & 0 & 0 & 0
\end{array}
\right),
\]
respectively.
It is well-known that the eigendecomposition of $\mathfrak{so}(l)$, which has $\mathbb{Z}_2 \times \mathbb{Z}_2$-graded structure, defines the $\mathbb{Z}_2 \times \mathbb{Z}_2$-symmetric structure on the real flag manifold $F:=SO(l)/S(O(r_1)\times O(r_2)\times O(r_3)\times O(r_4))$ and the Killing form on $\mathfrak{so}(l)$ induces a metric on $(F,\mathbb{Z}_2\times\mathbb{Z}_2)$.
Note that the dimensions $\dim(\mathfrak{g}^{(+1,-1)})=r_1r_2+r_3r_4$, 
$\dim(
\mathfrak{g}^{(-1,+1)})=r_1r_3+r_2r_4$ and 
$\dim(\mathfrak{g}^{(-1,-1)})=r_1r_4+r_2r_3$ differ from each other if and only if the $r_i$ are mutually distinct.
If $r_1r_2\neq0$ and $r_3=r_4=0$, then $F$ is a symmetric space.
If $r_1r_2r_3\neq0$, then $F$ is a $\mathbb{Z}_2\times\mathbb{Z}_2$-symmetric space.
Now, we focus on the case $(r_1,r_2,r_3,r_4)=(1,2,4,0)$ and $(1,2,3,5)$, i.e., define $M_1:=SO(7)/S(O(1)\times O(2)\times O(4))$, $M_2:=SO(11)/S(O(1)\times O(2)\times O(3)\times O(5))$. 
Then the dimensions of all eigenspaces in (\ref{eigendecomposition in example}) are mutually distinct, which contradicts (\ref{used when give example not isomorphic}).
Hence $(M,\Sigma_1,g)$ and $(M,\Sigma_2,g)$ are not isomorphic. 
Similarly, we can prove that $(M,\Sigma_i,g)$ and $(M,\Sigma_j,g)$ are not isomorphic for $i,j=1,\ldots,6$, $i\neq j$. 
By changing $(r_1,r_2,r_3,r_4)$, we obtain an infinite number of examples that $(M_1\times M_2,\Sigma_i,g)$ and $(M_1\times M_2,\Sigma_j,g)$ are not isomorphic. 
\end{example}

A $\Gamma$-symmetric structure is generally independent of the generators of $\Gamma$. 
However, as seen in the above examples, product structures are sensitive to the choice of generators for $\Gamma_i$ and not necessarily unique even if they are proper.
This is a phenomenon specific to $\Gamma$-symmetric spaces, which does not arise when we consider only symmetric spaces.

\vspace{0.2cm}
Next, we introduce an expression of product spaces using generators of $\Gamma$.
For a given connected smooth manifold $M$, let $\gamma$ and $\delta$ satisfy the following conditions:
\begin{itemize}
\item $\Gamma=\langle\gamma,\delta\rangle\cong\mathbb{Z}_2$ or $\mathbb{Z}_2\times\mathbb{Z}_2$.
\item $(M,\langle\gamma,\delta\rangle)$ is a $\Gamma$-symmetric space.
\end{itemize}
Then $(M_1\times M_2,\langle(\gamma_1,\gamma_2),(\delta_1,\delta_2)\rangle)$ is a product space of a $\Gamma_1$-symmetric space $(M_1,\langle\gamma_1,\delta_1\rangle)$ and a $\Gamma_2$-symmetric space $(M_2,\langle\gamma_2,\delta_2\rangle)$.
We will write $(M_1,\langle\gamma_1,\delta_1\rangle)\times(M_2,\langle\gamma_2,\delta_2\rangle)$ as $\left(M_1\times M_2,\langle(\gamma_1,\gamma_2),(\delta_1,\delta_2)\rangle\right)$. 
As we have already seen in examples~\ref{example of the product structure of two symmetric spaces}, \ref{example of product structures of two Z2Z2-symmetric space} and \ref{example of product structures of two Z2Z2-symmetric space}, every proper product space can be expressed in this notation by changing generators.
For example, $(M_1\times M_2,\Sigma_1)$ in (\ref{6 group Sigma}) is expressed as $(M_1,\langle\gamma_1,\delta_1\rangle)\times(M_2,\langle\gamma_2,\delta_2\rangle)$. 
Note that this expression is not unique.
It is easy to check the following conditions:
\begin{eqnarray*}
&&(M_1,\langle\gamma_1,\delta_1\rangle)\times(M_2,\langle\gamma_2,\delta_2\rangle) 
\cong(M_2,\langle\gamma_2,\delta_2\rangle)\times(M_1,\langle\gamma_1,\delta_1\rangle),\\
&&
\begin{aligned}    ((M_1,\langle\gamma_1,\delta_1\rangle)&\times(M_2,\langle\gamma_2,\delta_2\rangle))\times(M_3,\langle\gamma_3,\delta_3\rangle) \\
&=(M_1,\langle\gamma_1,\delta_1\rangle)\times((M_2,\langle\gamma_2,\delta_2\rangle)\times(M_3,\langle\gamma_3,\delta_3\rangle)).
\end{aligned}
\end{eqnarray*}

\vspace{0.2cm}
Finally, a Rimannian $\Gamma$-symmetric space $(M,\Gamma,g)$ is said to be \textit{locally reducible} if there exist positive-dimensional Rimannian $\Gamma_i$-symmetric spaces $(M_i,\Gamma_i,g_i)$, $i=1,2$, and a direct product $(M_1,\Gamma_1,g_1)\times (M_2,\Gamma_2,g_2)$ such that $(M,\Gamma,g)\simeq (M_1,\Gamma_1,g_1)\times (M_2,\Gamma_2,g_2)$. 
A Rimannian $\Gamma$-symmetric space is called \textit{locally irreducible} if it is not locally reducible.
\section{$\Gamma$-symmetric triples}\label{sec. Gamma-symmetric triples}
\subsection{$\Gamma$-symmetric triples}
Let $\Gamma$ be a finite abelian group again. We denote the automorphism group of a Lie group $G$ by $\Aut(G)$. Let $\rho:\Gamma\to\Aut(G)$ be an injective homomorphism. To shorten notation, we also write $\Gamma$ instead of $\rho(\Gamma)$. 
\begin{definition}[cf. \cite{Bahturin&Goze}]
Let $G$ be a connected Lie group and $K$ a closed subgroup of $G$ satisfying 
    \begin{equation*}
        F_0(\Gamma,G)\subset K\subset F(\Gamma,G),
    \end{equation*}
    where $F_0(\Gamma,G)$ denotes the connected component of $F(\Gamma,G)$ containing the identity element of $G$. 
    Then $(G,K,\Gamma)$ is called a \textit{$\Gamma$-symmetric triple}. 
\end{definition}
\begin{definition}\label{def of isom of triples}
     Let $(G,K,\Gamma)$ be a $\Gamma$-symmetric triple and $(G',K',\Gamma')$ a $\Gamma'$-symmetric triple.
     $(G,K,\Gamma)$ and $(G',K',\Gamma')$ are \textit{isomorphic} if there exist a Lie group isomorphism $\varphi : G\to G'$ and a group isomorphism $\Phi : \Gamma \to \Gamma'$ such that
     \begin{equation*}
         \mathrm{(A)}\quad\varphi\circ\gamma=\Phi(\gamma)\circ\varphi \textrm{\; for all }\gamma\in\Gamma
         \qquad\textrm{ and }\qquad 
         \mathrm{(B)}\quad\varphi(K)=K'.
     \end{equation*}
Then we write $(G,K,\Gamma)\cong(G',K',\Gamma')$ and call the pair $(\varphi,\Phi)$ an \textit{isomorphism}.
\end{definition}
We should strictly write not $\Phi : \Gamma \to \Gamma'$ but $\Phi : \rho_G(\Gamma) \to \rho_{G'}(\Gamma')$.
The abbreviated notation causes no confusion by the injectivity of $\rho_G$ and $\rho_{G'}$.

\begin{definition}
     Let $\Sigma$ be a subgroup of $\Gamma$. A triplet $(G,K,\Sigma)$ is a \textit{$\Sigma$-subsymmetric triple} of a $\Gamma$-symmetric triple $(G,K,\Gamma)$ if $(G,K,\Sigma)$ satisfies $F_0(\Sigma,G)\subset K\subset F(\Sigma,G)$. 
\end{definition}

Let $(G_i,K_i,\Gamma_i)$, $i=1,2$, be $\Gamma_i$-symmetric triples. 
We write $\Gamma=\Gamma_1\times\Gamma_2$, $G=G_1\times G_2$ and $K=K_1\times K_2$.
Let $(G,K,\Sigma)$ be a $\Sigma$-subsymmetric triple of $(G,K,\Gamma)$ with a subgroup $\Sigma$ of $\Gamma$.
Then, $(G,K,\Sigma)$ is called a \textit{direct product} of $(G_1,K_1,\Gamma_1)$ and $(G_2,K_2,\Gamma_2)$.

Let $\sigma$ and $\tau$ be involutions of $G$ satisfying $\sigma\tau=\tau\sigma$. 
If $\langle\sigma,\tau\rangle\cong\mathbb{Z}_2\times\mathbb{Z}_2$, then $(G,K,\langle\sigma,\tau\rangle)$ is a $\mathbb{Z}_2\times\mathbb{Z}_2$-symmetric triple. 
If $\langle\sigma,\tau\rangle\cong\mathbb{Z}_2$, then $(G,K,\langle\sigma,\tau\rangle)$ is a $\mathbb{Z}_2$-symmetric triple.
A $\mathbb{Z}_2$-symmetric triple $(G,K,\mathbb{Z}_2)$ is also written as $(G,K,\theta)$ and said to be a symmetric pair, where $\mathbb{Z}_2\cong\langle\theta\rangle$ (cf. \cite{helgason1979differential}).
The triplet $(G,K,\langle\sigma,\tau\rangle)$ is a pair of Lie groups corresponding to a symmetric triad $(\mathfrak{g},\sigma,\tau)$, mentioned in section~\ref{sec. Gamma-symmetric Lie algebra} (cf. \cite{ikawa2025intersectionrealflagmanifolds}). 
Moreover, $(G,K_1,K_2)$ is said to be a \textit{symmetric triad}, where $K_1$ is a closed subgroup of $G$ satisfying $F_0(\sigma,G)\subset K_1\subset F(\sigma,G)$ and $K_2$ is a closed subgroup of $G$ satisfying $F_0(\tau,G)\subset K_1\subset F(\tau,G)$ (cf. \cite{ohno2025polarsantipodalsetsgeneralized}). 
Here, $F_0(\gamma,G)$ denotes the connected component of $F(\gamma,G)$ containing the identity element of $G$.

A $\Gamma$-symmetric space $M$ is constructed from a $\Gamma$-symmetric triple $(G,K,\Gamma)$ (not necessarily $\Gamma\cong\mathbb{Z}_2$ or $\mathbb{Z}_2\times\mathbb{Z}_2$) (cf. \cite{Quast&Sakai}).
The manifold $M$ and the $\Gamma$-symmetric structure  $\mu=\{\mu^{\gamma}\}_{\gamma\in\Gamma}$ are given by 
\begin{equation*}
    M:=G/K,\;\;\mu^{\gamma}(x,y):=a_x\gamma(a_x^{-1}b_y)K(=\gamma_x(y)),
\end{equation*}
where $a_x\in G$ (resp. $b_y\in G$) satisfies $x=a_x K$ (resp. $y=b_y K$).
This construction is respectively commutative with the direct product and the isomorphic relation.
If $(\varphi,\Phi)$ is an isomorphism from $(G,K,\Gamma)$ to $(G',K',\Gamma')$, then the isomorphism $(\overline{\varphi},\Phi)$ from $(G/K,\Gamma)$ to $(G'/K',\Gamma')$ is given by 
\begin{equation*}
    \overline{\varphi}:G/K\to G'/K' ; gK\mapsto \varphi(g)K',
\end{equation*}
which is well-defined from (B).

\subsection{Riemannian $\Gamma$-symmetric triples}\label{Riemannian Gamma-symmetric triples}
For given a $\Gamma$-symmetric triple $(G,K,\Gamma)$, suppose that $\Gamma\cong\mathbb{Z}_2$ or $\mathbb{Z}_2\times\mathbb{Z}_2$.
The group $\Gamma$ is regarded as a subgroup of $\Aut(G)$ via $\rho_G:\Gamma\to\Aut(G)$.
We define $\mathfrak{g}=\Lie G$ and $\mathfrak{k}=\Lie K$.
To simplify notation, for given an automorphism $\phi$ of $G$ (resp. a subgroup $\Phi$ of $\Aut(G)$) we use the same letter $\phi$ (resp. $\Phi$) for the differential of $\phi$ (resp. the group of differentials of all $\phi\in\Phi$) throughout this paper.
Recall the notations (\ref{eigenspace}) and (\ref{simultaneous eigenspace}) for eigenspaces.
Let $\sigma$ and $\tau$ be commutating elements of the automorphism group $\Aut(\mathfrak{g})$ of $\mathfrak{g}$ satisfying $\Gamma=\langle\sigma,\tau\rangle$. 
Since $\Gamma$ is finite and abelian, these automorphisms are simultaneously diagonalizable. 
Therefore (\ref{simultaneous eigenspace}) gives the following eigendecomposition of $\mathfrak{g}$:
\begin{equation}\label{eigendecomposition} \mathfrak{g}=\mathfrak{g}^{(+1,+1)}\oplus\mathfrak{g}^{(+1,-1)}\oplus\mathfrak{g}^{(-1,+1)}\oplus\mathfrak{g}^{(-1,-1)}.
\end{equation}
The symbols $(+1,-1)$, $(-1,+1)$ and $(-1,-1)$ depend on the choice of $\sigma$ and $\tau$, however, the decomposition (\ref{eigendecomposition}) is independent of it.
The condition $F_0(\Gamma,G)\subset K\subset F(\Gamma,G)$ implies $\mathfrak{k}=\mathfrak{g}^{(+1,+1)}$. 
We often write
\begin{equation}\label{definition of m}
\mathfrak{m}=\mathfrak{g}^{(+1,-1)}\oplus\mathfrak{g}^{(-1,+1)}\oplus\mathfrak{g}^{(-1,-1)}
\end{equation}
and call
\begin{equation}\label{decomposition of g by k and m}
    \mathfrak{g}=\mathfrak{k}\oplus\mathfrak{m}
\end{equation}
the \textit{standard decomposition}. 
The subspace $\mathfrak{m}$ is said to be a \textit{standard complement}.
By the definition of eigenspaces, the decomposition (\ref{eigendecomposition}) has a $\Gamma$-graded structure, that is, $[\mathfrak{g}^{a},\mathfrak{g}^{b}]\subset\mathfrak{g}^{ab}$ hold for all $a,b\in\{(+1,+1),(+1,-1),(-1,+1),(-1,-1)\}$.
Hence, the standard decomposition is reductive, i.e., $[\mathfrak{k},\mathfrak{m}]\subset\mathfrak{m}$. 

For simplicity, we use the next symbols throughout this paper.
Let $\Gamma^{\times}$ be the set of elements in $\Gamma$ excluding the identity $e\in\Gamma$.
We put $e:=(+1,+1)$, $b:=(+1,-1)$, $c:=(-1,+1)$ and $d:=(-1,-1)$.
When $\Gamma\cong\mathbb{Z}_2\times\mathbb{Z}_2$, we often treat $\Gamma$ as $\{e,b,c,d\}$ and $\Gamma^{\times}$ as $\{b,c,d\}$. 
In addition, we write $\GL(V)$ for the Lie group of all linear automorphisms of a vector space $V$.

Now, assume that a $\Gamma$-symmetric triple $(G,K,\Gamma)$ satisfies 
\renewcommand{\theenumi}{(\roman{enumi})}
\begin{enumerate}
    \item \label{compactivity of Adjoint action of isotropy group} $\Ad_G(K)$ is a compact Lie subgroup of $\GL(\mathfrak{g})$.
\end{enumerate}
Then the compact Lie group $\Ad_G(K)|_{\mathfrak{g}^{a}}$ acts on $\mathfrak{g}^{a}$ for each $a\in\Gamma^{\times}$ 
and there exists an $\Ad_G(K)$-invariant inner product $g^a$ on $\mathfrak{g}^a$, that is, $g^a$ satisfies 
\begin{enumerate}[resume]
    \item \label{ga is Ad(K)-inv.} $g^a(\Ad_G(k)X,\Ad_G(k)Y)=g^a(X,Y) \quad(\forall k\in K, \forall X,Y\in\mathfrak{g}^a)$.
\end{enumerate}
We define an $\Ad_G(K)$-invariant inner product $g$ on $\mathfrak{m}$ by $g:=\bigoplus_{a\in\Gamma^{\times}} g^a$ (orthogonal direct sum). 
Then $g$ satisfies
\begin{enumerate}[resume]
    \item \label{deco. is perp w.r.t. g} The decomposition (\ref{definition of m}) is orthogonal with respect to $g$.
\end{enumerate}
It is easy to check that \ref{deco. is perp w.r.t. g} is equivalent that $\gamma|_{\mathfrak{m}}:\mathfrak{m}\to\mathfrak{m}$ is an isometry regarding to $g$ for every $\gamma\in\Gamma$.
\begin{definition}
    $(G,K,\Gamma,g)$ satisfying \ref{compactivity of Adjoint action of isotropy group}-\ref{deco. is perp w.r.t. g} is called a \textit{Riemannian $\Gamma$-symmetric triple}.
\end{definition}
\begin{definition}\label{def of isom of Riem triples}
    $(G,K,\Gamma,g)$ and $(G',K',\Gamma',g')$ are said to be \textit{isomorphic} if there exist a Lie group isomorphism $\varphi:G\to G'$ and a group isomorphism $\Phi:\Gamma\to\Gamma'$ such that (A), (B) and
    \begin{equation*}
        \mathrm{(C)}\quad  g'(\varphi X,\varphi Y)=g(X,Y) \quad\mathrm{for\;all}\;X,Y\in\mathfrak{m}.
    \end{equation*}
    Then we write $(G,K,\Gamma,g)\cong(G',K',\Gamma',g')$ and $(\varphi,\Phi)$ is also called an \textit{isomorphism}.
\end{definition}
Note that $\varphi\mathfrak{g}^a=\mathfrak{g}'^{\Phi(a)}$ hold for all $a\in\Gamma^{\times}$ from (A) and (B) and hence $\varphi\mathfrak{m}=\mathfrak{m}'$.

Let $(G_i,K_i,\Gamma_i,g_i)$, $i=1,2$, be Riemannian $\Gamma_i$-symmetric triples
and $(G_1\times G_2,K_1\times K_2,\Sigma)$ a direct product such that $\Sigma\cong\mathbb{Z}_2$ or $\mathbb{Z}_2\times\mathbb{Z}_2$.
Then $(G_1\times G_2,K_1\times K_2,\Sigma,g_1\oplus g_2)$ is a Riemannian $\Sigma$-symmetric triple. 
This is also called a \textit{direct product}.

For given a Riemannian $\Gamma$-symmetric triple $(G,K,\Gamma,g)$, let $(G/K,\Gamma)$ be the $\Gamma$-symmetric space constructed from $(G,K,\Gamma)$. 
Identifying the tangent space at the origin of $G/K$ with the standard complement $\mathfrak{m}$, a $G$-invariant metric on $(G/K,\Gamma)$ is induced by $g$. 
We also use the same letter $g$ for the metric. 
It is well-known that $\Ad_G(K)$-invariant inner products on $\mathfrak{m}$ have a one-to-one correspondence with $G$-invariant metrics on $G/K$. 
By the next lemma, $(G/K,\Gamma)$ is a Riemannian $\Gamma$-symmetric space with respect to the metric $g$.
\begin{lemma}[cf. \cite{Bouyakoub&Goze&Remm}]
    For $(G,K,\Gamma,g)$ and $(G/K,\Gamma)$, the following conditions are equivalent:
    \begin{itemize}
        \item The decomposition (\ref{definition of m}) is orthogonal with respect to $g$.
        \item Every symmetry of $(G/K,\Gamma)$ is isometry with respect to $g$. 
    \end{itemize}
\end{lemma}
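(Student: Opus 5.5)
We prove both implications at the origin $o=eK$ and then spread the conclusion over $G/K$ using $G$-invariance of the metric and the equivariance of the symmetries. The tangent space $T_o(G/K)$ is identified with $\mathfrak{m}$ via $X\mapsto \frac{d}{dt}\exp(tX)K|_{t=0}$.

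First we compute the differentials of the symmetries at $o$. Take $a_o=e$. By the construction, $\gamma_o(bK)=\gamma(b)K$ for $\gamma\in\Gamma$. Hence $(\gamma_o)_*$ at $o$ is $\gamma|_{\mathfrak{m}}$, which preserves $\mathfrak{m}$ because $\mathfrak{m}$ is a sum of simultaneous eigenspaces of $\Gamma$. At a general point $x=a_xK$ we have $\gamma_x=L_{a_x}\circ\gamma_o\circ L_{a_x}^{-1}$, since $a_x\gamma(a_x^{-1}b)K=L_{a_x}\gamma_o(L_{a_x^{-1}}bK)$. The left translations $L_{a}$ are isometries of the $G$-invariant metric $g$. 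Therefore $\gamma_x$ is an isometry if and only if $\gamma_o$ is. Moreover, $\gamma_o$ is $\gamma$-equivariant: $\gamma_o\circ L_b=L_{\gamma(b)}\circ\gamma_o$. It follows that $\gamma_o$ is an isometry of $G/K$ if and only if $(\gamma_o)_*=\gamma|_{\mathfrak{m}}$ is an isometry of $(\mathfrak{m},g)$. Indeed, at a point $bK$ we have $(\gamma_o)_*=(L_{\gamma(b)})_*\circ\gamma|_{\mathfrak{m}}\circ(L_b)_*^{-1}$, and both translations are isometries.

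So the lemma reduces to a linear-algebra statement: every $\gamma|_{\mathfrak{m}}$ with $\gamma\in\Gamma$ is $g$-orthogonal if and only if the decomposition (\ref{definition of m}) is $g$-orthogonal. This is the remark stated just after condition \ref{deco. is perp w.r.t. g}; we verify it directly.

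If the decomposition is orthogonal, each $\gamma$ acts on each summand $\mathfrak{g}^a$ by the scalar $\pm1$, hence $\gamma|_{\mathfrak{m}}$ is orthogonal.

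Conversely, suppose all $\gamma|_{\mathfrak{m}}$ are orthogonal. Take $X\in\mathfrak{g}^a$ and $Y\in\mathfrak{g}^{a'}$ with $a\neq a'$ in $\Gamma^\times$. Choose $\gamma\in\Gamma$ acting by $+1$ on one of these summands and by $-1$ on the other; such a $\gamma$ exists since the characters $a,a'$ of $\Gamma$ differ. For $\Gamma\cong\mathbb{Z}_2$ there is only one summand, so there is nothing to check. Then
\[
g(X,Y)=g(\gamma X,\gamma Y)=-g(X,Y),
\]
so $g(X,Y)=0$.

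There is no substantial obstacle. The only step needing care is the identification of $(\gamma_x)_*$ at $x$ with the conjugate of $\gamma|_{\mathfrak{m}}$ by translations, which rests on the equivariance identity $\gamma_o\circ L_b=L_{\gamma(b)}\circ\gamma_o$. That identity follows immediately from the definition $\mu^\gamma(x,y)=a_x\gamma(a_x^{-1}b_y)K$, and it is independent of the choice of representative $a_x$ because $\gamma$ fixes $K$ pointwise.
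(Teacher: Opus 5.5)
Your proof is correct, and it follows the route the paper intends. The paper gives no argument of its own beyond the citation and its remark, right after condition (iii), that orthogonality of the decomposition is equivalent to every $\gamma|_{\mathfrak{m}}$ being a $g$-isometry; you supply that linear-algebra check, together with the reduction from the symmetries of $G/K$ to $\gamma|_{\mathfrak{m}}$ via $\gamma_x=L_{a_x}\circ\gamma_o\circ L_{a_x}^{-1}$ and $\gamma_o\circ L_b=L_{\gamma(b)}\circ\gamma_o$, which the paper leaves implicit.
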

This construction is commutative with each isomorphic relation.
\section{$\Gamma$-symmetric Lie algebras}\label{sec. Gamma-symmetric Lie algebra}
\subsection{$\Gamma$-symmetric Lie algebras}
Suppose that $\Gamma\cong\mathbb{Z}_2$ or $\mathbb{Z}_2\times\mathbb{Z}_2$ and use the notation (\ref{simultaneous eigenspace})-(\ref{decomposition of g by k and m}) in this subsection.
Let $\mathfrak{g}$ be a Lie algebra over $\mathbb{R}$ with finite dimension and $\rho:\Gamma\to\Aut(\mathfrak{g})$ an injective homomorphism. 
We simply write $\Gamma$ instead of $\rho(\Gamma)$. 
Then $(\mathfrak{g},\Gamma)$ is called an \textit{$\Gamma$-symmetric Lie algebra}. 
We define the \textit{standard decomposition} and the \textit{standard complement} from (\ref{definition of m})-(\ref{decomposition of g by k and m}).
\begin{definition}
     $(\mathfrak{g},\Gamma)$ and $(\mathfrak{g}',\Gamma')$ are \textit{isomorphic}  if there exist a Lie algebra isomorphism $\varphi : \mathfrak{g}\to \mathfrak{g}'$ and a group isomorphism $\Phi : \Gamma \to \Gamma'$ satisfying
     \begin{equation*}
     \mathrm{(A)'}\quad\varphi\circ\gamma=\Phi(\gamma)\circ\varphi\textrm{\;\;for all } \gamma\in\Gamma.
     \end{equation*}
Then, we write $(\mathfrak{g},\Gamma)\cong(\mathfrak{g}',\Gamma')$ and call the pair $(\varphi,\Phi)$ an \textit{isomorphism}.
\end{definition}
Let $\sigma$ and $\tau$ be commutating elements of $\Aut(\mathfrak{g})$ generating $\Gamma$. 
Then, we also write $(\mathfrak{g},\sigma,\tau)$ for $(\mathfrak{g},\Gamma)$.
If $\Gamma\cong\mathbb{Z}_2\times\mathbb{Z}_2$, then $(\mathfrak{g},\sigma,\tau)$ is said to be a \textit{commutative symmetric triad}, which has been studied in the sense of Hermann actions (cf. \cite{ikawa2025intersectionrealflagmanifolds}). 
If $\Gamma=\langle\theta\rangle\cong\mathbb{Z}_2$, then $(\mathfrak{g},\theta)$ is said to be a \textit{symmetric Lie algebra} (cf. \cite{helgason1979differential}).

\begin{definition}
     Let $\Sigma$ be a subgroup of $\Gamma$. 
     A pair $(\mathfrak{g},\Sigma)$ is a \textit{$\Sigma$-subsymmetric Lie algebra} of a $\Gamma$-symmetric Lie algebra $(\mathfrak{g},\Gamma)$ if $\bigcap_{\theta\in\Sigma}\mathfrak{g}^{\theta}=\bigcap_{\theta\in\Gamma}\mathfrak{g}^{\theta}$, that is, $\bigcap_{\theta\in\Sigma}\mathfrak{g}^{\theta}=\mathfrak{k}$.
     The $\Sigma$-subsymmetric Lie algebra is called \textit{trivial} if $\Sigma=\Gamma$.
     Moreovr, $(\mathfrak{g},\Gamma)$ is called \textit{proper} if it has no nontrivial $\Sigma$-subsymmetric Lie algebra for any subgroup $\Sigma$ of $\Gamma$. 
\end{definition}
Let $(G,K,\Gamma)$ be a $\Gamma$-symmetric triple and $\mathfrak{g}$ denotes the Lie algebra of $G$.
Then, $(\mathfrak{g},\Gamma)$ is the $\Gamma$-symmetric Lie algebra and this $(\mathfrak{g},\Gamma)$ is called a $\Gamma$-symmetric Lie algebra associated with $(G,K,\Gamma)$. 
If $(G,K,\Sigma)$ is a $\Sigma$-subsymmetric triple of $(G,K,\Gamma)$, then the $\Sigma$-symmetric Lie algebra $(\mathfrak{g},\Sigma)$ associated with $(G,K,\Sigma)$ is a $\Sigma$-subsymmetric Lie algebra of $(\mathfrak{g},\Gamma)$. 
In contrast, if $(\mathfrak{g},\Sigma)$ is a $\Sigma$-subsymmetric Lie algebra of $(\mathfrak{g},\Gamma)$, then $(G,K,\Sigma)$ is a $\Sigma$-subsymmetric triple of $(G,K,\Gamma)$ since $F_0(\Sigma,G)=F_0(\Gamma,G)\subset K\subset F(\Gamma,G)\subset F(\Sigma,G)$ holds.

\begin{lemma}\label{condition that Z_2timesZ_2-symmetric Lie algebras have subsymmetric Lie algebras}
Assume that $\Gamma\cong\mathbb{Z}_2\times\mathbb{Z}_2$. 
    A $\Gamma$-symmetric Lie algebra $(\mathfrak{g},\Gamma)$ is proper if and only if 
    all of the vector spaces $\mathfrak{g}^{b}, \mathfrak{g}^{c}$ and $\mathfrak{g}^{d}$ are nonzero vector spaces.
\end{lemma}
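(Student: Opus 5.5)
Since $\Gamma\cong\mathbb{Z}_2\times\mathbb{Z}_2=\{e,b,c,d\}$, the proper subgroups of $\Gamma$ are only the trivial group $\{e\}$ and the three subgroups of order two, $\langle\sigma\rangle$, $\langle\tau\rangle$ and $\langle\sigma\tau\rangle$. So properness amounts to checking, for each of these four subgroups $\Sigma$, whether $\bigcap_{\theta\in\Sigma}\mathfrak{g}^{\theta}=\mathfrak{k}$ holds. I will express each fixed-point algebra through the eigendecomposition (\ref{eigendecomposition}). By the definition of the simultaneous eigenspaces,
\begin{equation*}
\mathfrak{g}^{\sigma}=\mathfrak{g}^{e}\oplus\mathfrak{g}^{b},\quad
\mathfrak{g}^{\tau}=\mathfrak{g}^{e}\oplus\mathfrak{g}^{c},\quad
\mathfrak{g}^{\sigma\tau}=\mathfrak{g}^{e}\oplus\mathfrak{g}^{d},
\end{equation*}
and $\mathfrak{k}=\mathfrak{g}^{e}$. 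For $\mathfrak{g}^{\sigma\tau}$, note that $\sigma\tau$ acts on $\mathfrak{g}^{(\epsilon_1,\epsilon_2)}$ by the product $\epsilon_1\epsilon_2$, which equals $+1$ exactly on $\mathfrak{g}^{e}$ and $\mathfrak{g}^{d}$.

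For the trivial subgroup the intersection over $\Sigma=\{e\}$ is $\mathfrak{g}$ itself. It equals $\mathfrak{k}$ if and only if $\mathfrak{g}^{b}=\mathfrak{g}^{c}=\mathfrak{g}^{d}=0$. Hence:

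\textbf{($\Leftarrow$)} If $\mathfrak{g}^{b},\mathfrak{g}^{c},\mathfrak{g}^{d}$ are all nonzero, then the intersection is strictly larger than $\mathfrak{k}$ for each of the four proper subgroups: $\mathfrak{g}\supsetneq\mathfrak{k}$, $\mathfrak{g}^{\sigma}\supsetneq\mathfrak{k}$, $\mathfrak{g}^{\tau}\supsetneq\mathfrak{k}$ and $\mathfrak{g}^{\sigma\tau}\supsetneq\mathfrak{k}$. So no nontrivial $\Sigma$-subsymmetric Lie algebra exists, and $(\mathfrak{g},\Gamma)$ is proper.

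\textbf{($\Rightarrow$)} Conversely, suppose one of these spaces vanishes, say $\mathfrak{g}^{b}=0$. Then $\mathfrak{g}^{\sigma}=\mathfrak{k}$, so $(\mathfrak{g},\langle\sigma\rangle)$ is a nontrivial subsymmetric Lie algebra and $(\mathfrak{g},\Gamma)$ is not proper. The cases $\mathfrak{g}^{c}=0$ and $\mathfrak{g}^{d}=0$ work the same way with $\langle\tau\rangle$ and $\langle\sigma\tau\rangle$.

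The only step needing care is the identification of $\mathfrak{g}^{\sigma\tau}$ and the complete list of subgroups of $\mathbb{Z}_2\times\mathbb{Z}_2$; both are immediate. There is no real obstacle. One might worry about injectivity of $\rho$ for the subgroup $\Sigma$, but $\rho|_{\Sigma}$ is injective automatically, and the trivial group is handled by the first observation above.
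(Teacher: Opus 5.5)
Your proof is correct and follows essentially the same route as the paper: write $\mathfrak{g}^{\sigma}$, $\mathfrak{g}^{\tau}$ and $\mathfrak{g}^{\sigma\tau}$ as $\mathfrak{k}$ plus one of $\mathfrak{g}^{b}$, $\mathfrak{g}^{c}$, $\mathfrak{g}^{d}$, then compare with $\mathfrak{k}$ in both directions. Your explicit treatment of the trivial subgroup $\{e\}$, where the intersection is all of $\mathfrak{g}$, covers a case the paper passes over when it writes $\Sigma=\langle\theta\rangle$ and lists only the three order-two possibilities.
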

\begin{proof}
We prove both directions by contraposition.
Suppose that $(\mathfrak{g},\Gamma)$ has a nontrivial $\Sigma$-subsymmetric Lie algebra $(\mathfrak{g},\Sigma)$.
We write $\Gamma=\langle\sigma,\tau\rangle$ and $\Sigma=\langle\theta\rangle$.
Then $\mathfrak{g}^{\theta}$ always satisfies $\mathfrak{g^{\theta}}=\mathfrak{k}\oplus\mathfrak{g}^{b}$, $\mathfrak{k}\oplus\mathfrak{g}^{c}$ or $\mathfrak{k}\oplus\mathfrak{g}^{d}$. 
If $\mathfrak{g^{\theta}}=\mathfrak{k}\oplus\mathfrak{g}^{b}$, then we obtain $\mathfrak{g}^{b}=\{0\}$ since $\mathfrak{g}^{\theta}=\mathfrak{k}$.
The same proof works for other cases.
Conversely, assume that $\mathfrak{g}^{b}=\{0\}$, $\mathfrak{g}^{c}=\{0\}$ or $\mathfrak{g}^{d}=\{0\}$. 
If $\mathfrak{g}^{b}=\{0\}$, then $\mathfrak{g}^{\sigma}=\mathfrak{k}$, implying that $(\mathfrak{g},\langle\sigma\rangle)$ is a nontrivial $\langle\sigma\rangle$-subsymmetric Lie algebra of $(\mathfrak{g},\Gamma)$.
The same proof works for others.
\end{proof}

\begin{definition}\label{Def of a direct sum of Gamma-symmetric Lie algebras}
    Let $(\mathfrak{g}_i,\Gamma_i)$, $i=1,2$, be $\Gamma_i$-symmetric Lie algebras and $\Sigma$ a subgroup of $\Gamma_1\times\Gamma_2$ with $\Sigma\cong\mathbb{Z}_2$ or $\mathbb{Z}_2\times\mathbb{Z}_2$.
If $(\mathfrak{g}_1\oplus\mathfrak{g}_2,\Sigma)$ is a $\Sigma$-subsymmetric Lie algebra of $(\mathfrak{g}_1\oplus\mathfrak{g}_2,\Gamma_1\times\Gamma_2)$, then $(\mathfrak{g}_1\oplus\mathfrak{g}_2,\Sigma)$ is called a \textit{direct sum} of $(\mathfrak{g}_1,\Gamma_1)$ and $(\mathfrak{g}_2,\Gamma_2)$.
\end{definition}
\subsection{Riemannian $\Gamma$-symmetric Lie algebras}
Assume that a $\Gamma$-symmetric Lie algebra $(\mathfrak{g},\Gamma)$ satisfies 
\renewcommand{\theenumi}{(\roman{enumi})$'$}
\begin{enumerate}
    \item \label{compactivity of adjoint action of the Lie algebra k} $\ad_{\mathfrak{g}}(\mathfrak{k})$ generates a compact connected Lie subgroup of $\GL(\mathfrak{g})$.
\end{enumerate}
A $\Gamma$-symmetric Lie algebra $(\mathfrak{g},\Gamma)$ satisfying (i)$'$ is called an \textit{orthogonal $\Gamma$-symmetric Lie algebra}. 
From (i)$'$, there exists an $\ad_{\mathfrak{g}}(\mathfrak{k})$-invariant inner product $g^a$ on $\mathfrak{g}^a$ for each $a\in\Gamma^{\times}$, that is, the inner product $g^a$ satisfies
\begin{enumerate}[resume]
    \item \label{ga is ad(k)-inv.} $g^a(\ad_{\mathfrak{g}}(Z)X,Y)+g^a(X,\ad_{\mathfrak{g}}(Z)Y)=0
        \quad (\forall Z\in \mathfrak{k},\forall X,Y\in\mathfrak{g}^a)$.
\end{enumerate}
We define an $\ad_{\mathfrak{g}}(\mathfrak{k})$-invariant inner product $g$ on the standard complement $\mathfrak{m}$ by $g:=\bigoplus_{a\in\Gamma^{\times}} g^a$ (orthogonal direct sum).
Then $g$ satisfies
\begin{enumerate}[resume]
    \item \label{deco.' is perp w.r.t. g} The decomposition (\ref{definition of m}) is orthogonal with respect to $g$.
\end{enumerate}
\begin{definition}
    $(\mathfrak{g},\Gamma,g)$ satisfying \ref{compactivity of adjoint action of the Lie algebra k}-\ref{deco.' is perp w.r.t. g} is called a \textit{Riemannian $\Gamma$-symmetric Lie algebra}.
\end{definition}
\begin{definition}
    $(\mathfrak{g},\Gamma,g)$ and $(\mathfrak{g}',\Gamma',g')$ are said to be \textit{isomorphic} if there exist a Lie algebra isomorphism $\varphi:\mathfrak{g}\to \mathfrak{g}'$ and a group isomorphism $\Phi:\Gamma\to\Gamma'$ such that (A)$'$ and
    \begin{equation*}
        \mathrm{(C)'}\quad  g'(\varphi X,\varphi Y)=g(X,Y) \quad\mathrm{for\;all}\;X,Y\in\mathfrak{m}.
    \end{equation*}
    Then we write $(\mathfrak{g},\Gamma,g)\cong(\mathfrak{g}',\Gamma',g')$ and $(\varphi,\Phi)$ is called an \textit{isomorphism}.
\end{definition}

For given Riemannian $\Gamma_i$-symmetric Lie algebras $(\mathfrak{g}_i,\Gamma_i,g_i)$, $i=1,2$, let $(\mathfrak{g}_1\oplus\mathfrak{g}_2,\Sigma)$ be a direct sum of $(\mathfrak{g}_1,\Gamma_1)$ and $(\mathfrak{g}_2,\Gamma_2)$.
Then $(\mathfrak{g}_1\oplus\mathfrak{g}_2,\Sigma,g_1\oplus g_2)$ is a Riemannian $\Sigma$-symmetric Lie algebra. 
This is also called a \textit{direct sum} of $(\mathfrak{g}_1,\Gamma_1,g_1)$ and $(\mathfrak{g}_2,\Gamma_2,g_2)$. 
Similarly to $\Gamma$-symmetric spaces, every direct sum $(\mathfrak{g}_1\oplus\mathfrak{g}_2,\Sigma,g_1\oplus g_2)$ is written by generators of $\Gamma_i$ (see section~\ref{sec:Product structures}). 
\section{Decomposition of effective semisimple Riemmanian $\mathbb{Z}_2\times\mathbb{Z}_2$-symmetric Lie algebras of nontrivial type}\label{sec. decomposition of nontrivial type}
\subsection{Semisimple Riemannian $\mathbb{Z}_2\times\mathbb{Z}_2$-symmetric Lie algebras of nontrivial type}
In this subsection, assume that $\Gamma\cong\mathbb{Z}_2\times\mathbb{Z}_2$. 
We first consider a semisimple $\Gamma$-symmetric Lie algebra $(\mathfrak{g},\Gamma)$, that is, $\Gamma$-symmetric Lie algebra $(\mathfrak{g},\Gamma)$ such that $\mathfrak{g}$ is semisimple.
\begin{definition}\label{def of effective}
    A $\Gamma$-symmetric Lie algebra $(\mathfrak{g},\Gamma)$ is said to be \textit{effective} if there exists no nontrivial ideal of $\mathfrak{g}$ contained in $\mathfrak{k}$. Also, a Riemannian $\Gamma$-symmetric Lie algebra $(\mathfrak{g},\Gamma,g)$ is said to be \textit{effective} if $(\mathfrak{g},\Gamma)$ is effective.
\end{definition}
\begin{lemma}\label{property_of_semisimple_Riemannian_Gamma-symmetric_Lie_algebra}
    Let $(\mathfrak{g},\Gamma)$ be an orthogonal $\Gamma$-symmetric Lie algebra and $B$ the Killing form on $\mathfrak{g}$. 
    Then,
    \renewcommand{\theenumi}{(\Roman{enumi})}
    \begin{enumerate}[series=lemma-semisimple]
        \item \label{B-perp}
        $B(\mathfrak{g}^a,\mathfrak{g}^b)=\{0\}$ for any distinct $a,b\in\Gamma$, in particular, $B(\mathfrak{k},\mathfrak{m})=\{0\}$. 
        \item \label{B-negative deginite on k}
        If $(\mathfrak{g},\Gamma)$ is effective, then $B_{\mathfrak{k}}:=B|_{\mathfrak{k}\times\mathfrak{k}}$ is negative definite on $\mathfrak{k}$.  
        \item \label{The form of k and m for semisimple Gamma-symmetric Lie albenras}
        If $(\mathfrak{g},\Gamma)$ is effective and $\mathfrak{g}$ is semisimple, then $\mathfrak{k}=\sum_{a\in\Gamma^{\times}}[\mathfrak{g}^a,\mathfrak{g}^a]$ and $\mathfrak{m}\neq\{0\}$.
        \item \label{nondegenerancy of B on each eigenspades}
        If $\mathfrak{g}$ is semisimple, then $B$ is nondegenerate on $\mathfrak{g}^a$ for each $a\in\Gamma$.
    \end{enumerate}
\end{lemma}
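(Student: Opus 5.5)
The four items are proved in order, each using the previous ones.

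For \ref{B-perp}, the plan is to use that the Killing form is invariant under every automorphism of $\mathfrak{g}$. Take distinct $a,b\in\Gamma$, $X\in\mathfrak{g}^a$ and $Y\in\mathfrak{g}^b$. Since $a\neq b$, some generator $\theta\in\{\sigma,\tau\}$ acts on $X$ and on $Y$ with opposite signs, say $\theta X=\epsilon X$ and $\theta Y=-\epsilon Y$. Then $B(X,Y)=B(\theta X,\theta Y)=-B(X,Y)$, so $B(X,Y)=0$. The special case $B(\mathfrak{k},\mathfrak{m})=\{0\}$ follows because $\mathfrak{k}=\mathfrak{g}^e$ and $\mathfrak{m}=\bigoplus_{a\in\Gamma^\times}\mathfrak{g}^a$.

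For \ref{B-negative deginite on k}, condition (i)$'$ gives an $\ad_{\mathfrak{g}}(\mathfrak{k})$-invariant inner product on all of $\mathfrak{g}$, obtained by averaging over the compact group generated by $\ad_{\mathfrak{g}}(\mathfrak{k})$. With respect to it, every $\ad(Z)$ with $Z\in\mathfrak{k}$ is skew-symmetric. Hence $\ad(Z)$ has purely imaginary eigenvalues and
\[
B(Z,Z)=\tr(\ad(Z)^2)=-\|\ad Z\|^2\le 0,
\]
with equality exactly when $\ad(Z)=0$, i.e. $Z\in\mathfrak{z}(\mathfrak{g})$. It remains to exclude nonzero $Z\in\mathfrak{k}\cap\mathfrak{z}(\mathfrak{g})$. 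Such a $Z$ spans $\mathbb{R}Z$, which is an ideal of $\mathfrak{g}$ contained in $\mathfrak{k}$, and effectiveness forbids that. Therefore $B_{\mathfrak{k}}$ is negative definite.

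For \ref{nondegenerancy of B on each eigenspades}, semisimplicity makes $B$ nondegenerate on $\mathfrak{g}$. By \ref{B-perp} the decomposition (\ref{eigendecomposition}) is $B$-orthogonal. So if $X\in\mathfrak{g}^a$ is $B$-orthogonal to $\mathfrak{g}^a$, it is $B$-orthogonal to all of $\mathfrak{g}$, and therefore $X=0$.

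For \ref{The form of k and m for semisimple Gamma-symmetric Lie albenras}, first note that $\mathfrak{m}\neq\{0\}$: otherwise $\mathfrak{g}=\mathfrak{k}$ is itself an ideal contained in $\mathfrak{k}$, and $\mathfrak{g}\neq\{0\}$ is implicit. Next set $\mathfrak{k}':=\sum_{a\in\Gamma^\times}[\mathfrak{g}^a,\mathfrak{g}^a]$, which lies in $\mathfrak{k}$ by the grading. Let $\mathfrak{k}''$ be its $B_{\mathfrak{k}}$-orthogonal complement in $\mathfrak{k}$; by \ref{B-negative deginite on k}, $\mathfrak{k}=\mathfrak{k}'\oplus\mathfrak{k}''$. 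Take $Z\in\mathfrak{k}''$, $a\in\Gamma^\times$, and $X,Y\in\mathfrak{g}^a$. Invariance of $B$ gives
\[
B([Z,X],Y)=B(Z,[X,Y])=0 .
\]
Since $[Z,X]\in\mathfrak{g}^a$, \ref{nondegenerancy of B on each eigenspades} yields $[Z,X]=0$, so $[\mathfrak{k}'',\mathfrak{m}]=0$. Because $\mathfrak{k}'$ is an ideal of $\mathfrak{k}$ (by the Jacobi identity and $[\mathfrak{k},\mathfrak{g}^a]\subset\mathfrak{g}^a$) and $B$ is $\ad\mathfrak{k}$-invariant, $\mathfrak{k}''$ is also an ideal of $\mathfrak{k}$. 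Then $[\mathfrak{k}'',\mathfrak{g}]=[\mathfrak{k}'',\mathfrak{k}]\subset\mathfrak{k}''$, so $\mathfrak{k}''$ is an ideal of $\mathfrak{g}$ contained in $\mathfrak{k}$. Effectiveness forces $\mathfrak{k}''=0$, hence $\mathfrak{k}=\mathfrak{k}'$.

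I expect \ref{The form of k and m for semisimple Gamma-symmetric Lie albenras} to be the main obstacle, specifically showing that $\mathfrak{k}''$ is an ideal of $\mathfrak{k}$. That point needs the invariance of $B$ together with $\mathfrak{k}'$ being an ideal of $\mathfrak{k}$. The other steps are routine.
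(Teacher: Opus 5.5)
Your proposal is correct and follows essentially the same route as the paper: sign-flipping under an automorphism for (I), an $\ad(\mathfrak{k})$-invariant inner product plus effectiveness for (II), Cartan's criterion with (I) for (IV), and for (III) the $B$-orthogonal complement $\mathfrak{k}''$ of $\sum_{a\in\Gamma^{\times}}[\mathfrak{g}^a,\mathfrak{g}^a]$ in $\mathfrak{k}$, which is shown to centralize $\mathfrak{m}$ and hence to be an ideal of $\mathfrak{g}$ inside $\mathfrak{k}$. The differences are cosmetic: you get $\mathfrak{m}\neq\{0\}$ directly from effectiveness, and you get $\ad(\mathfrak{k})$-stability of $\mathfrak{k}''$ from $\mathfrak{k}'$ being an ideal of $\mathfrak{k}$, whereas the paper derives $[\mathfrak{k}_1,\mathfrak{k}_0]=\{0\}$ from $[\mathfrak{k}_1,\mathfrak{g}^a]=\{0\}$ via Jacobi.
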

\begin{proof}
\ref{B-perp}
    For any distinct $a,a'\in\Gamma$ with $a'\neq e$, there exists $\gamma\in\Gamma$ such that $\gamma X=X$ and $\gamma Y=-Y$ for all $X\in\mathfrak{g}^a$ and $Y\in\mathfrak{g}^{a'}$. 
    Thus $B(X,Y)=B(\gamma X,\gamma Y)=-B(X,Y)$, and hence $B(X,Y)=0$.
    
\ref{B-negative deginite on k} From \ref{compactivity of adjoint action of the Lie algebra k}, there exists an $\ad_{\mathfrak{g}}(\mathfrak{k})$-invariant inner product $\langle,\rangle$ on $\mathfrak{g}$.
    Therefore, for each $Z\in\mathfrak{k}$, eigenvalues of $\ad_{\mathfrak{g}}(Z)$ are written as $\sqrt{-1}\lambda_i$, $\lambda_i\in\mathbb{R}$.
    Hence, $B(Z,Z)=-\sum_{i}\lambda_i^2\leq0$. 
    This implies that $B(Z,Z)=0$ is equivalent to $Z\in\mathfrak{k}\cap\mathfrak{z}$ for the center $\mathfrak{z}$ of $\mathfrak{g}$. 
    On the other hand, $\mathfrak{k}\cap\mathfrak{z}=\{0\}$ since $(\mathfrak{g},\Gamma)$ is effective.
    Hence, $B(Z,Z)=0$ is equivalent to $Z=0$. 

\ref{The form of k and m for semisimple Gamma-symmetric Lie albenras}
Put $\mathfrak{k}_0:=\sum_{a\in\Gamma^{\times}}[\mathfrak{g}^a,\mathfrak{g}^a]$ and 
$\mathfrak{k}_1:=\{X\in\mathfrak{k}\mid B(X,\mathfrak{k}_0)=\{0\}\}$.
From \ref{B-negative deginite on k}, we have $\mathfrak{k}=\mathfrak{k}_0\oplus\mathfrak{k}_1$. 
Since \ref{B-perp} and $B$ is $\ad_{\mathfrak{g}}(\mathfrak{g})$-invariant and nondegenerate, we have $[\mathfrak{k}_1,\mathfrak{g}^a]=\{0\}$ for all $a\in\Gamma^{\times}$, which imply $[\mathfrak{k}_1,\mathfrak{m}]=\{0\}$, $[\mathfrak{k}_1,\mathfrak{k}_0]=\{0\}$ and $[\mathfrak{k}_1,\mathfrak{k}]\subset\mathfrak{k}_1$.
Therefore, $\mathfrak{k}_1$ is an ideal of $\mathfrak{g}$ contained in $\mathfrak{k}$. 
Since $(\mathfrak{g},\Gamma)$ is effective, we obtain $\mathfrak{k}_1=\{0\}$.
Next, if $\mathfrak{m}=\{0\}$, then $\mathfrak{g}=\{0\}$ because $\mathfrak{k}=\sum_{a\in\Gamma^{\times}}[\mathfrak{g}^a,\mathfrak{g}^a]=\{0\}$. 
Thus $\mathfrak{m}\neq\{0\}$.

\ref{nondegenerancy of B on each eigenspades}
If $X\in\mathfrak{g}^a$ satisfies $B(X,\mathfrak{g}^a)=\{0\}$, then $B(X,\mathfrak{g})=\{0\}$ holds by \ref{B-perp}.
Since $\mathfrak{g}$ is semisimple, we have $X=0$, which completes the proof.
\end{proof}

\begin{remark}
In contrast to the result in \cite[Lemma 2.1]{Goze&Remm}, we find that the sum $\mathfrak{k}=\sum_{a\in\Gamma^{\times}}[\mathfrak{g}^a,\mathfrak{g}^a]$ is generally not a direct sum.
A counterexample is given by a flag manifold.
In example~\ref{flag manifolds}, we consider the case $(r_1,r_2,r_3,r_4)=(1,2,2,0)$.
We define
\[
B:=
\left(
\begin{array}{c|cc|cc}
0 & 0 & 0 & 2 & 1\\
\hline
0 & 0 & 0 & 0 & 0\\
0 & 0 & 0 & 0 & 0\\ 
\hline
-2 & 0 & 0 & 0 & 0\\ 
-1 & 0 & 0 & 0 & 0
\end{array}
\right),\;
B':=
\left(
\begin{array}{c|cc|cc}
0 & 0 & 0 & 1 & 1\\
\hline
0 & 0 & 0 & 0 & 0\\
0 & 0 & 0 & 0 & 0\\ 
\hline
-1 & 0 & 0 & 0 & 0\\ 
-1 & 0 & 0 & 0 & 0
\end{array}
\right)
\in\mathfrak{g}^c
\]
and
\[
C:=
\left(
\begin{array}{c|cc|cc}
0 & 0 & 0 & 0 & 0\\
\hline
0 & 0 & 0 & 2 & 1\\
0 & 0 & 0 & 0 & 1\\ 
\hline
0 & -2 & 0 & 0 & 0\\ 
0 & -1 & -1 & 0 & 0
\end{array}
\right),\;
C':=
\left(
\begin{array}{c|cc|cc}
0 & 0 & 0 & 0 & 0\\
\hline
0 & 0 & 0 & 1 & 1\\
0 & 0 & 0 & 0 & 1\\ 
\hline
0 & -1 & 0 & 0 & 0\\ 
0 & -1 & -1 & 0 & 0
\end{array}
\right)
\in\mathfrak{g}^d.
\]
Then, we can easily calculate
\[
[B,B']=[C,C']=
\left(
\begin{array}{c|cc|cc}
0 & 0 & 0 & 0 & 0\\
\hline
0 & 0 & 0 & 0 & 0\\
0 & 0 & 0 & 0 & 0\\ 
\hline
0 & 0 & 0 & 0 & -1\\ 
0 & 0 & 0 & 1 & 0
\end{array}
\right)
\in\mathfrak{k}\setminus\{0\},
\]
which means that $[\mathfrak{g}^c,\mathfrak{g}^c]\cap[\mathfrak{g}^d,\mathfrak{g}^d]\neq\{0\}$.
\end{remark}
\begin{definition}\label{def of irreducible}
    A $\Gamma$-symmetric Lie algebra $(\mathfrak{g},\Gamma)$ is said to be \textit{$\Gamma$-irreducible} (or \textit{irreducible}, for short) if there exists no nontrivial ideal $\mathfrak{g}_0$ of $\mathfrak{g}$ satisfying $\gamma\mathfrak{g}_0\subset\mathfrak{g}_0$ for all $\gamma\in\Gamma$. A $\Gamma$-symmetric Lie algebra is said to be \textit{$\Gamma$-reducible} (or \textit{reducible}) if it is not $\Gamma$-irreducible. 
    Moreover, a Riemannian $\Gamma$-symmetric Lie algebra $(\mathfrak{g},\Gamma,g)$ is said to be $\Gamma$-irreducible (resp. $\Gamma$-reducible) if $(\mathfrak{g},\Gamma)$ is $\Gamma$-irreducible (resp. $\Gamma$-reducible).
\end{definition}
Goze and Remm gave an alternative definition of $\Gamma$-irreducibility regarding the infinitesimal isotropy representation of $\mathfrak{k}$ on $\mathfrak{m}$ in \cite[Definition 2.2]{Goze&Remm}.
The next lemma is not only used in the proof of theorem~\ref{decomposition of semisimple Lie algebra of nontrival type} but also suggests the relationship between the two $\Gamma$-irreducibilities.

\begin{lemma}\label{lemma_gammareducible}
Let $(\mathfrak{g},\Gamma)$ be an effective semisimple orthogonal $\Gamma$-symmetric Lie algebra. 
The following conditions are equivalent:
    \renewcommand{\theenumi}{(\Roman{enumi})}
    \begin{enumerate}[resume*=lemma-semisimple]
        \item \label{Gamma-reducible} $(\mathfrak{g},\Gamma)$ is $\Gamma$-reducible.
        \item \label{Decomposition of infinitesimal isotropy representation}
        There exist nontrivial vector subspaces $\mathfrak{m}_0$ and $\mathfrak{m}_1$ of $\mathfrak{m}$ such that 
        \begin{itemize}
            \item $\mathfrak{m}=\mathfrak{m}_0\oplus\mathfrak{m}_1$ and $[\mathfrak{m}_0,\mathfrak{m}_1]=\{0\}$,
            \item $\mathfrak{k}\oplus\mathfrak{m}_i$ is a $\Gamma_i$-graded Lie algebra for each $i=0,1$, 
where $\Gamma_i$ is a nontrivial subgroup of $\Gamma$.
        \end{itemize}
    \end{enumerate}
\end{lemma}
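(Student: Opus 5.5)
We prove both implications directly, using lemma~\ref{property_of_semisimple_Riemannian_Gamma-symmetric_Lie_algebra} throughout.

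For \ref{Gamma-reducible}$\Rightarrow$\ref{Decomposition of infinitesimal isotropy representation}, let $\mathfrak{g}_0$ be a nontrivial $\Gamma$-invariant ideal. Since $\mathfrak{g}$ is semisimple, its $B$-orthogonal complement $\mathfrak{g}_1$ is again an ideal, $\mathfrak{g}=\mathfrak{g}_0\oplus\mathfrak{g}_1$ and $[\mathfrak{g}_0,\mathfrak{g}_1]=\{0\}$. Moreover $\mathfrak{g}_1$ is $\Gamma$-invariant, because each $\gamma$ preserves $B$ and $\mathfrak{g}_0$. Each $\gamma$ then preserves both summands, so every eigenspace splits as $\mathfrak{g}^a=\mathfrak{g}_0^a\oplus\mathfrak{g}_1^a$. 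Hence $\mathfrak{k}=\mathfrak{k}_0\oplus\mathfrak{k}_1$ and $\mathfrak{m}=\mathfrak{m}_0\oplus\mathfrak{m}_1$, where $\mathfrak{k}_i=\mathfrak{g}_i\cap\mathfrak{k}$ and $\mathfrak{m}_i=\mathfrak{g}_i\cap\mathfrak{m}$, and we get $[\mathfrak{m}_0,\mathfrak{m}_1]=\{0\}$.

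Each $\mathfrak{m}_i$ is nonzero: if $\mathfrak{m}_i=\{0\}$, the ideal $\mathfrak{g}_i$ would lie in $\mathfrak{k}$, contradicting effectiveness. The subspace $\mathfrak{k}\oplus\mathfrak{m}_i=\mathfrak{k}_{1-i}\oplus\mathfrak{g}_i$ is a subalgebra, since $\mathfrak{k}_{1-i}$ commutes with $\mathfrak{g}_i$. It is graded by the eigenspaces $\mathfrak{k}\oplus\mathfrak{g}_i^a$, and the grading group $\Gamma_i$ is taken to be generated by the $a$ with $\mathfrak{g}_i^a\neq\{0\}$. This group is a nontrivial subgroup because $\mathfrak{m}_i\neq\{0\}$. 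Formally, $\Gamma_i$ is the image of $\Gamma$ acting on $\mathfrak{g}_i$, suitably identified with a subgroup of $\Gamma$. That identification is a bookkeeping point, and I would state the meaning of ``$\Gamma_i$-graded'' explicitly in the write-up.

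For \ref{Decomposition of infinitesimal isotropy representation}$\Rightarrow$\ref{Gamma-reducible}, the natural candidate for the ideal is $\mathfrak{g}_0:=\mathfrak{m}_0+[\mathfrak{m}_0,\mathfrak{m}_0]$. The main step is to show this is a $\Gamma$-invariant ideal distinct from $0$ and $\mathfrak{g}$.

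First, we need each $\mathfrak{m}_i$ to be $\Gamma$-stable, i.e.\ $\mathfrak{m}_i=\bigoplus_a(\mathfrak{m}_i\cap\mathfrak{g}^a)$. I read this from the grading hypothesis: $\mathfrak{k}\oplus\mathfrak{m}_i$ being $\Gamma_i$-graded compatibly with the eigenspace decomposition is exactly what makes $\mathfrak{m}_i$ a sum of $\Gamma$-eigenspace pieces. This interpretation is the delicate point, and I would make it explicit.

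Next, $[\mathfrak{k},\mathfrak{m}_0]\subset\mathfrak{m}_0$. The bracket lies in $(\mathfrak{k}\oplus\mathfrak{m}_0)\cap\mathfrak{m}$, since $\mathfrak{k}\oplus\mathfrak{m}_0$ is a subalgebra and $[\mathfrak{k},\mathfrak{m}]\subset\mathfrak{m}$, and that intersection is $\mathfrak{m}_0$.

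With these two facts, closure of $\mathfrak{g}_0$ under bracketing with $\mathfrak{g}=\mathfrak{k}\oplus\mathfrak{m}_0\oplus\mathfrak{m}_1$ is checked by the Jacobi identity:
\begin{itemize}
\item $[\mathfrak{m}_1,\mathfrak{m}_0]=\{0\}$;
\item $[\mathfrak{m}_1,[\mathfrak{m}_0,\mathfrak{m}_0]]=\{0\}$, by Jacobi and the previous line;
\item $[\mathfrak{k},[\mathfrak{m}_0,\mathfrak{m}_0]]\subset[\mathfrak{m}_0,\mathfrak{m}_0]$, by Jacobi and $[\mathfrak{k},\mathfrak{m}_0]\subset\mathfrak{m}_0$;
\item $[\mathfrak{m}_0,[\mathfrak{m}_0,\mathfrak{m}_0]]\subset\mathfrak{m}_0+[\mathfrak{m}_0,\mathfrak{m}_0]$, because $[\mathfrak{m}_0,\mathfrak{m}_0]\subset\mathfrak{k}\oplus\mathfrak{m}_0$ and we may use the previous case.
\end{itemize}
$\Gamma$-invariance of $\mathfrak{g}_0$ is immediate from $\Gamma$-stability of $\mathfrak{m}_0$.

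It remains to rule out $\mathfrak{g}_0=\mathfrak{g}$. In that case $\mathfrak{m}\subset\mathfrak{g}_0$ would force $\mathfrak{m}_1\subset\mathfrak{g}_0$. But $[\mathfrak{m}_1,\mathfrak{g}_0]=\{0\}$, so $\mathfrak{m}_1$ would be central in $\mathfrak{g}$, impossible since $\mathfrak{g}$ is semisimple and $\mathfrak{m}_1\neq\{0\}$. Finally, $\mathfrak{g}_0\neq\{0\}$ because $\mathfrak{m}_0\neq\{0\}$.

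I expect the main obstacle to be the interpretation of ``$\Gamma_i$-graded'' needed to obtain $\Gamma$-stability of $\mathfrak{m}_i$; the remaining bracket checks are routine.
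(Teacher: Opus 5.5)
Your proof is correct. The forward direction is essentially the paper's. The paper also splits $\mathfrak{g}=\mathfrak{g}_0\oplus\mathfrak{g}_1$ into $\Gamma$-stable ideals and sets $\mathfrak{k}_i=\mathfrak{k}\cap\mathfrak{g}_i$, $\mathfrak{m}_i=\bigoplus_a\mathfrak{g}^a\cap\mathfrak{g}_i$. The only difference is that you get $\mathfrak{m}_i\neq\{0\}$ directly from effectiveness rather than through (III).

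The converse takes a genuinely different route.

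\textbf{The paper's route.} It builds both pieces at once, $\mathfrak{g}_i=\mathfrak{k}_i\oplus\mathfrak{m}_i$ with $\mathfrak{k}_i=\sum_a[\mathfrak{g}^a_i,\mathfrak{g}^a_i]$. It then uses (III) to get $\mathfrak{k}=\mathfrak{k}_0+\mathfrak{k}_1$. It uses negative definiteness of $B$ on $\mathfrak{k}$ from (II) to make this sum direct, via a step it only asserts (``we can verify'' that $\mathfrak{k}_1$ is the $B$-orthogonal complement of $\mathfrak{k}_0$). Finally, Jacobi gives $[\mathfrak{g}_0,\mathfrak{g}_1]=\{0\}$.

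\textbf{Your route.} You build only $\mathfrak{g}_0=\mathfrak{m}_0+[\mathfrak{m}_0,\mathfrak{m}_0]$. This is the same space as the paper's $\mathfrak{g}_0$, since for $a\neq b$ one has $[\mathfrak{g}^a_0,\mathfrak{g}^b_0]\subset\mathfrak{m}\cap(\mathfrak{k}\oplus\mathfrak{m}_0)=\mathfrak{m}_0$. You check the ideal property by Jacobi together with $[\mathfrak{k},\mathfrak{m}_0]\subset\mathfrak{m}_0$, and you exclude $\mathfrak{g}_0=\mathfrak{g}$ because the centre is trivial. One small point: in your last bullet, the fact actually invoked is $[\mathfrak{k},\mathfrak{m}_0]\subset\mathfrak{m}_0$, not the preceding bullet.

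\textbf{Comparison.} Your argument is more elementary. In this direction it uses only that $\mathfrak{g}$ has trivial centre, and needs neither effectiveness, orthogonality, nor the description of $\mathfrak{k}$ in (III); it also sidesteps the unproved Killing-form step. The paper's version, in exchange, exhibits the complementary ideal and the splitting $\mathfrak{k}=\mathfrak{k}_0\oplus\mathfrak{k}_1$ explicitly, in the form used later in the decomposition theorem. You could recover the complement as the $B$-orthogonal complement of your $\mathfrak{g}_0$.

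Your reading of the grading hypothesis as $\mathfrak{m}_i=\bigoplus_a(\mathfrak{m}_i\cap\mathfrak{g}^a)$ is exactly what the paper uses implicitly. It does so when it sets $\mathfrak{g}^a_i:=\mathfrak{g}^a\cap\mathfrak{m}_i$ and argues $\Gamma$-invariance because $\gamma$ acts by scalars on eigenspaces. So this is not a gap relative to the paper.
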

\begin{proof}
Assume that \ref{Gamma-reducible}. 
Then, since $\mathfrak{g}$ is semisimple, there exist nontrivial ideals $\mathfrak{g}_i$, $i=0,1$, of $\mathfrak{g}$ such that $\mathfrak{g}=\mathfrak{g}_0\oplus\mathfrak{g}_1$ (a direct sum of ideals) and $\gamma\mathfrak{g}_i\subset\mathfrak{g}_i$ for all $\gamma\in\Gamma$. 
We define 
    \begin{equation}\label{decomposition_of_mi_e.t.c.}
        \mathfrak{k}_i:=\mathfrak{k}\cap\mathfrak{g}_i,\quad 
        \mathfrak{g}^a_i:=\mathfrak{g}^a\cap\mathfrak{g}_i,\quad         
    \mathfrak{m}_i:=\bigoplus_{a\in\Gamma^{\times}}\mathfrak{g}^a_i
    \textrm{\;\; and \;\;}
    \Gamma_i:=\{\gamma|_{\mathfrak{g}_i} \mid \gamma\in\Gamma\}.
    \end{equation}
Each $\Gamma_i$ is a nontrivial group since $(\mathfrak{g},\Gamma)$ is effective.
Then, we obtain
    \begin{equation}\label{g_i=k_i+m_i,k=k_0+k_1,m=m_0+m_1}
        \mathfrak{g}_i=\mathfrak{k}_i\oplus\mathfrak{m}_i,\quad
        \mathfrak{k}=\mathfrak{k}_0\oplus\mathfrak{k}_1,\quad
        \mathfrak{m}=\mathfrak{m}_0\oplus\mathfrak{m}_1
        \textrm{\quad and \quad} 
        [\mathfrak{m}_0,\mathfrak{m}_1]=\{0\}.
    \end{equation}
Moreover, $\mathfrak{k}\oplus\mathfrak{m}_i$ are $\Gamma_i$-graded Lie algebras and $(\mathfrak{g}_i,\Gamma_i)$ are $\Gamma_i$-symmetric Lie algebras.
Since $(\mathfrak{g},\Gamma)$ is effective, $(\mathfrak{g}_i,\Gamma_i)$ are effective.
Hence we obtain $\mathfrak{m}_i\neq\{0\}$, $i=0,1$, meaning $\mathfrak{m}_i$ are nontrivial from \ref{The form of k and m for semisimple Gamma-symmetric Lie albenras}.

Conversely, assume that \ref{Decomposition of infinitesimal isotropy representation}. 
We define vector subspaces $\mathfrak{g}_i$ of $\mathfrak{g}$ by
\begin{equation*}
    \mathfrak{g}_i:=\mathfrak{k}_i\oplus\mathfrak{m}_i,\textrm{\quad where \quad}
    \mathfrak{k}_i:=\sum_{a\in\Gamma^{\times}}[\mathfrak{g}^a_i,\mathfrak{g}^a_i]
    \textrm{\quad and \quad}
    \mathfrak{g}^a_i:=\mathfrak{g}^a\cap\mathfrak{m}_i
\end{equation*}
for each $a\in\Gamma^{\times}$. 
From \ref{The form of k and m for semisimple Gamma-symmetric Lie albenras}, we have $\mathfrak{k}=\mathfrak{k}_0+\mathfrak{k}_1$.
Since we can verify that $\mathfrak{k}_1$ coincides with  the perpendicular complement of $\mathfrak{k}_0$ in $\mathfrak{k}$ with respect to the Killing form $B$, we obtain $\mathfrak{k}=\mathfrak{k}_0\oplus\mathfrak{k}_1$.
Moreover, $[\mathfrak{m}_0,\mathfrak{m}_1]=\{0\}$ implies that $[\mathfrak{k}_0,\mathfrak{m}_1]=[\mathfrak{k}_1,\mathfrak{m}_0]=[\mathfrak{k}_0,\mathfrak{k}_1]=\{0\}$ and thus $[\mathfrak{g}_0,\mathfrak{g}_1]=\{0\}$.
Hence, $\mathfrak{g}_i$ are ideals of $\mathfrak{g}$ and we obtain $\mathfrak{g}=\mathfrak{g}_0\oplus\mathfrak{g}_1$.
The ideals $\mathfrak{g}_i$ are nontrivial since $\mathfrak{m}_i$ are nontrivial.
In addition, 
$\gamma\mathfrak{g}_i\subset\mathfrak{g}_i$ hold for all $\gamma\in\Gamma$
since $\gamma$ acts as a scalar map on each eigenspace.
We have proved the equivalence of \ref{Gamma-reducible} and \ref{Decomposition of infinitesimal isotropy representation}.
\end{proof}

Let $X_{\mathfrak{k}}$ (resp. $X_{\mathfrak{m}}$) denote the $\mathfrak{k}$-component (resp. $\mathfrak{m}$-component) of $X\in\mathfrak{g}$ with respect to the standard decomposition \eqref{decomposition of g by k and m} and set $V_{\mathfrak{k}}=\{X_{\mathfrak{k}}\mid X\in V\}$ and $V_{\mathfrak{m}}=\{X_{\mathfrak{m}}\mid X\in V\}$ for any vector subspace $V$ of $\mathfrak{g}$.
The following definition provides the sufficient condition for existence of the decomposition of a semisimple Riemannian $\Gamma$-symmetric Lie algebra.
\begin{definition}\label{def-nontrivial type}
An effective semisimple Riemannian $\Gamma$-symmetric Lie algebra $(\mathfrak{g}, \Gamma, g)$ is said to be of nontrivial type if $\ad_{\mathfrak{g}}([\mathfrak{m},\mathfrak{m}]_{\mathfrak{k}})X=\{0\}$ implies $X=0$ for any $X\in\mathfrak{m}$, that is, $[\mathfrak{m}, \mathfrak{m}]_{\mathfrak{k}}$ has no invariant vector in $\mathfrak{m}$.
\end{definition}
We state the geometrical meaning of nontrivial type.
As we will see in section~\ref{sec. the relationship between Riemannian Gamma-symmetric triples, Riemannian Gamma-symmetric Lie algebras and Riemannian Gamma-symmetric spaces}, we can construct Riemannian $\Gamma$-symmetric Lie algebra $(\mathfrak{g},\Gamma,g)$ from a given Riemannian $\Gamma$-symmetric space $(M,\Gamma,\mu,g)$.
Then, according to \cite[Chapter~X]{Kobayashi-Nomizu}, the reductive homogeneous space $M$ has a unique connection $\nabla^C$, called the \textit{canonical connection}, on $M$ which satisfies
\begin{equation}\label{tor and cur of cano}
T^{\nabla^C}_o(X,Y)=-[X,Y]_{\mathfrak{m}},
\qquad
R^{\nabla^C}_o(X,Y)Z=-\ad_{\mathfrak{g}}([X,Y]_{\mathfrak{k}})Z
\end{equation}
for all $X,Y,Z\in\mathfrak{m}$, where $T^{\nabla^C}_o$ and $R^{\nabla^C}_o$ denote the torsion and curvature tensors at the origin $o\in M$ with respect to $\nabla^C$, respectively.
Also, note that $\mathfrak{m}$ and $T_oM$ are identified under a map $\pi_*:\mathfrak{m}\ni X\mapsto X^{\sharp}_o=\left.\frac{d}{dt}\right|_{t=0}(\mathrm{exp}\,tX)(o)\in T_oM$.
Hence, the condition for nontrivial type means that the action of the Lie algebra of the holonomy group of the canonical connection has no nonzero invariant vector (cf. \cite[Corollary~4.3 Chapter~X]{Kobayashi-Nomizu}).
\begin{remark}

The canonical connection $\nabla^C$ and the Levi-Civita connection do not coincide in general.
Indeed, if they are equal, then $[\mathfrak{m},\mathfrak{m}]\subset\mathfrak{k}$ holds from \eqref{tor and cur of cano}.
Conversely, for given Riemannian symmetric space, its canonical connection and Levi-Civita connection coincide.
\end{remark}
A Riemannian symmetric Lie algebra $(\mathfrak{g},\mathbb{Z}_2,g)$ such that $[\mathfrak{m},\mathfrak{m}]_{\mathfrak{k}}(=[\mathfrak{m},\mathfrak{m}])$ has no invariant vector in $\mathfrak{m}$ is said to be of semisimple type (cf. \cite{ise1991lie}). 
It is known that an effective Riemannian symmetric Lie algebra $(\mathfrak{g},\mathbb{Z}_2,g)$ is of semisimple type if and only if $\mathfrak{g}$ is semisimple.
On the other hand, an effective semisimple Riemannian $\Gamma$-symmetric Lie algebra is not necessarily of nontrivial type (example~\ref{An example of a non-perpendicular decomposition}).

From the proof of lemma~\ref{lemma_gammareducible}, an effective semisimple $\Gamma$-symmetric Lie algebra $(\mathfrak{g}, \Gamma)$ is $\Gamma$-reducible if and only if it can be decomposed into a direct sum of two (positive-dimensional) effective semisimple $\Gamma$-symmetric Lie algebras (see \eqref{decomposition_of_mi_e.t.c.} and \eqref{g_i=k_i+m_i,k=k_0+k_1,m=m_0+m_1}).
However, for a given effective semisimple Riemannian $\Gamma$-symmetric Lie algebra $(\mathfrak{g}, \Gamma, g)$, even if it cannot be decomposed into a direct sum of two (positive-dimensional) effective semisimple Riemannian $\Gamma$-symmetric Lie algebras, its underlying 
$\Gamma$-symmetric Lie algebra $(\mathfrak{g}, \Gamma)$ can still be $\Gamma$-reducible (example~\ref{An example of a non-perpendicular decomposition}).
This situation does not occur if $(\mathfrak{g}, \Gamma, g)$ is of nontrivial type:
\begin{theorem}\label{decomposition of semisimple Lie algebra of nontrival type}
Let $(\mathfrak{g},\Gamma,g)$ be an effective semisimple Riemannian $\Gamma$-symmetric Lie algebra of nontrivial type. 
Then there exist a family of effective, semisimple and  $\Gamma_i$-irreducible Riemannian $\Gamma_i$-symmetric Lie algebras of nontrivial type $\{(\mathfrak{g}_i,\Gamma_i,g_i)\}_{i=1}^r$ and a direct sum of $(\mathfrak{g}_1,\Gamma_1,g_1), \ldots, (\mathfrak{g}_r,\Gamma_r,g_r)$, denoted by $(\mathfrak{g}_1,\Gamma_1,g_1)\oplus\cdots\oplus(\mathfrak{g}_r,\Gamma_r,g_r)$, such that
\begin{equation}\label{decom._of_semisimple_Riem_Lie_alg.}
(\mathfrak{g},\Gamma,g)\cong(\mathfrak{g}_1,\Gamma_1,g_1)\oplus\cdots\oplus(\mathfrak{g}_r,\Gamma_r,g_r).
\end{equation}
Furthermore, the family $\{(\mathfrak{g}_i,\Gamma_i,g_i)\}_{i=1}^r$ is unique up to isomorphism and permutation.
\end{theorem}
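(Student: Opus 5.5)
Existence goes by induction on $\dim\mathfrak{g}$, using lemma~\ref{lemma_gammareducible} to split off $\Gamma$-stable ideals. The nontrivial-type hypothesis is then used to show that the given metric $g$ splits orthogonally along this ideal decomposition. If $(\mathfrak{g},\Gamma)$ is $\Gamma$-irreducible, we take $r=1$. Otherwise, by the proof of lemma~\ref{lemma_gammareducible}, we write $\mathfrak{g}=\mathfrak{g}_0\oplus\mathfrak{g}_1$ with $\Gamma$-stable nontrivial ideals. This gives $\mathfrak{k}=\mathfrak{k}_0\oplus\mathfrak{k}_1$, $\mathfrak{m}=\mathfrak{m}_0\oplus\mathfrak{m}_1$ and $\mathfrak{g}^a=\mathfrak{g}^a_0\oplus\mathfrak{g}^a_1$ as in \eqref{g_i=k_i+m_i,k=k_0+k_1,m=m_0+m_1}, and each $(\mathfrak{g}_i,\Gamma_i)$ is effective and semisimple.

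The key step is to show $g(\mathfrak{m}_0,\mathfrak{m}_1)=\{0\}$. By \ref{deco.' is perp w.r.t. g} it suffices to treat each $a\in\Gamma^{\times}$ separately. Fix $a$ and let $A:\mathfrak{g}^a_0\to\mathfrak{g}^a_1$ be the linear map determined by $g^a(X,Y)=g^a(\pi_1AX,Y)$ for $Y\in\mathfrak{g}^a_1$. Here $\pi_1$ is the $g^a$-orthogonal projection data, or equivalently $A$ comes from the off-diagonal block of the Gram operator of $g^a$ relative to the fixed splitting. Since $g^a$ is $\ad(\mathfrak{k})$-invariant and the splitting is $\ad(\mathfrak{k})$-stable, $A$ intertwines the actions of $\mathfrak{k}$. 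Now $\mathfrak{k}_1$ acts trivially on $\mathfrak{g}^a_0$, so for $Z\in\mathfrak{k}_1$ we get $\ad(Z)AX=A\ad(Z)X=0$. Thus $A(\mathfrak{g}_0^a)$ is annihilated by $\mathfrak{k}_1$, and it is also annihilated by $\mathfrak{k}_0$, since $[\mathfrak{k}_0,\mathfrak{g}_1]=0$. Hence $A(\mathfrak{g}^a_0)$ is annihilated by all of $\mathfrak{k}\supset[\mathfrak{m},\mathfrak{m}]_{\mathfrak{k}}$. The nontrivial-type condition then forces $A=0$, which gives $g=g|_{\mathfrak{m}_0}\oplus g|_{\mathfrak{m}_1}$. (This is exactly where nontrivial type is needed; example~\ref{An example of a non-perpendicular decomposition} shows that the step can fail without it.)

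Next we set $g_i=g|_{\mathfrak{m}_i}$. Each $(\mathfrak{g}_i,\Gamma_i,g_i)$ is Riemannian, since \ref{compactivity of adjoint action of the Lie algebra k} passes to the ideal via the projection $\mathfrak{k}\to\mathfrak{k}_i$. Each is also of nontrivial type, because $[\mathfrak{m},\mathfrak{m}]_{\mathfrak{k}}\cap\mathfrak{g}_i=[\mathfrak{m}_i,\mathfrak{m}_i]_{\mathfrak{k}_i}$ and this acts on $\mathfrak{m}_i$ as the whole does. Moreover, $(\mathfrak{g},\Gamma,g)$ is isomorphic to a direct sum in the sense of definition~\ref{Def of a direct sum of Gamma-symmetric Lie algebras}. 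The group $\Gamma$ embeds in $\Gamma_0\times\Gamma_1$ by $\gamma\mapsto(\gamma|_{\mathfrak{g}_0},\gamma|_{\mathfrak{g}_1})$, and the fixed algebra is $\mathfrak{k}_0\oplus\mathfrak{k}_1$, so the subsymmetric condition holds. Since $\dim\mathfrak{g}_i<\dim\mathfrak{g}$, induction completes existence, and associativity of the direct sum handles the iteration.

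For uniqueness we mimic the symmetric-space argument. Suppose there are two decompositions into $\Gamma$-stable ideals, each of them $\Gamma_i$-irreducible. Since $\mathfrak{g}$ is semisimple, every ideal is a sum of simple ideals. A $\Gamma$-stable ideal that is minimal among the $\Gamma$-stable ideals is a single $\Gamma$-orbit of simple ideals. The $\Gamma$-irreducible summands are therefore exactly the minimal $\Gamma$-stable ideals, and these are canonically determined. Hence the two families of ideals coincide up to permutation. The restrictions of $\Gamma$ and of $g$ are then determined, using the orthogonality shown above for each decomposition. Any isomorphism $(\varphi,\Phi)$ between two direct sums carries minimal $\Gamma$-stable ideals to minimal ones, so it induces componentwise isomorphisms. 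The main obstacle I expect is the orthogonality step. Care is needed to define $A$ cleanly, for instance through the $g^a$-orthogonal complement of $\mathfrak{g}^a_1$ written as a graph over $\mathfrak{g}^a_0$, and to check that it commutes with $\ad(\mathfrak{k})$.
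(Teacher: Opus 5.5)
Your proposal is correct and follows the paper's proof. Your map $A$ is the $g^a$-orthogonal projection onto $\mathfrak{g}^a_1$ restricted to $\mathfrak{g}^a_0$, which is the paper's $X\mapsto X_1$ with the indices relabelled. You kill it by nontrivial type just as the paper does, except that you apply the condition to the whole $\mathfrak{g}$ (annihilation by all of $\mathfrak{k}$) rather than to the factor $(\mathfrak{g}_1,\Gamma_1)$. For uniqueness, you identify the summands canonically as sums over $\Gamma$-orbits of simple ideals, while the paper argues through the intersections $\mathfrak{g}'_{i'}\cap\mathfrak{g}_i$ and irreducibility; both rest on the uniqueness of the decomposition into minimal $\Gamma$-stable ideals.
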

\begin{proof}
For the $\Gamma$-symmetric Lie algebra $(\mathfrak{g},\Gamma)$, let $(\mathfrak{g}_0,\Gamma_0)$ and $(\mathfrak{g}_1,\Gamma_1)$ be $\Gamma_i$-symmetric Lie algebras ($i=0,1$) defined by \eqref{decomposition_of_mi_e.t.c.} and \eqref{g_i=k_i+m_i,k=k_0+k_1,m=m_0+m_1}.
Then the subgroup $\Sigma=\{(\,\gamma|_{\mathfrak{g}_0},\,\gamma|_{\mathfrak{g}_1}\,)\mid\gamma\in\Gamma\}\subset\Gamma_0\times\Gamma_1$ gives the direct sum $(\mathfrak{g}_0\oplus\mathfrak{g}_1,\Sigma)$ isomorphic to the original $(\mathfrak{g},\Gamma)$.
Since $(\mathfrak{g},\Gamma)$ is effective (resp. orthogonal), each $(\mathfrak{g}_i,\Gamma_i)$ is effective (resp. orthogonal).
Since $\mathfrak{g}$ is finite-dimensional, by repeating this process, we obtain the family of effective, orthogonal, semisimple and  $\Gamma_i$-irreducible $\Gamma_i$-symmetric Lie algebras $\{(\mathfrak{g}_i,\Gamma_i)\}_{i=1}^r$ and the direct sum of $(\mathfrak{g}_1,\Gamma_1), \ldots, (\mathfrak{g}_r,\Gamma_r)$ isomorphic to $(\mathfrak{g},\Gamma)$.
Setting $g_i:=g|_{\mathfrak{m}_i\times\mathfrak{m}_i}$, each $(\mathfrak{g}_i,\Gamma_i,g_i)$ is Riemannian $\Gamma_i$-symmetric Lie algebra. 
Since $(\mathfrak{g},\Gamma,g)$ is of nontrivial type, each $(\mathfrak{g}_i,\Gamma_i,g_i)$ is also of nontrivial type.
From these arguments, to prove the existence of the family and the direct sum, it suffices to show that $\mathfrak{g}_1$ and $\mathfrak{g}_2$ are perpendicular with respect to $g$ when $r=2$.
In order to do this, we prove that $\mathfrak{g}_{1}^{a}$ and ${\mathfrak{g}}_{2}^{a}$ are perpendicular for each $a\in\Gamma^{\times}$. 
Note that $\mathfrak{g}^a=\mathfrak{g}^a_1\oplus\mathfrak{g}^a_2$.
We define ${\mathfrak{g}^a_1}^{\perp}$ by the perpendicular complement of $\mathfrak{g}^a_1$ in $\mathfrak{g}^a$ with respect to $g^a=g|_{\mathfrak{g}^a\times\mathfrak{g}^a}$ and then we have $\mathfrak{g}^a=\mathfrak{g}^a_1\oplus {\mathfrak{g}^a_1}^{\perp}$.
For $X\in \mathfrak{g}^a_2$, $X_1$ denotes $\mathfrak{g}^a_1$ component of $X$ with respect to the decomposition $\mathfrak{g}^a=\mathfrak{g}^a_1\oplus {\mathfrak{g}^a_1}^{\perp}$.
On the other hand, we obtain $g|_{\mathfrak{g}^a_1\times \mathfrak{g}^a_1}([Z,X_1],Y)=g|_{\mathfrak{g}^a\times \mathfrak{g}^a}([Z,X],Y)=0$ for all $Y\in\mathfrak{g}^a_1$ and for all $Z\in \mathfrak{k}_1$ because $[\mathfrak{k}_1,\mathfrak{g}^a_2]=\{0\}$ and ${\mathfrak{g}^a_1}^{\perp}$ is $\mathfrak{k}_1$-invariant.
Thus we have $[\mathfrak{k}_1,X_1]=\{0\}$.
Since $(\mathfrak{g}_1,\Gamma_1)$ is of nontrivial type, we obtain $X_1=0$, that is, $\mathfrak{g}^a_2\subset{\mathfrak{g}^a_1}^{\perp}$.
Here note that $[\mathfrak{m}_1,\mathfrak{m}_1]_{\mathfrak{k}_1}=\mathfrak{k}_1$ from \ref{The form of k and m for semisimple Gamma-symmetric Lie albenras} in lemma~\ref{property_of_semisimple_Riemannian_Gamma-symmetric_Lie_algebra}.
Comparing the dimensions, we get $\mathfrak{g}^a_2={\mathfrak{g}^a_1}^{\perp}$.

Next, we prove the uniqueness.
Since every direct sum is expressed by the generators of $\Gamma_{1}$, $\ldots$, $\Gamma_{r}$ as mentioned at the end of section~\ref{sec:Product structures}, we can take the generators $\sigma_{i}$ and $\tau_{i}$ of $\Gamma_{i}$ to satisfy 
\begin{equation}\label{sigma,tau}
\sigma=\sigma_{1}\times\cdots\times\sigma_{r}
\quad\text{and}\quad
\tau=\tau_{1}\times\cdots\times\tau_{r}
\end{equation}
for fixed generators $\sigma$, $\tau$ of $\Gamma$.
Let us denote another decomposition by 
\begin{equation*}\label{In proof, another decom.}
(\mathfrak{g},\Gamma,g)\cong(\mathfrak{g}'_1,\Gamma'_1,g'_1)\oplus\cdots\oplus(\mathfrak{g}'_{r'},\Gamma'_{r'},g'_{r'}).
\end{equation*}
Let $(\psi,\Psi)$ be an isomorphism from $(\mathfrak{g}'_1,\Gamma'_1,g'_1)\oplus\cdots\oplus(\mathfrak{g}'_{r'},\Gamma'_{r'},g'_{r'})$ to $(\mathfrak{g},\Gamma,g)$.
For all $i'\in\{1,\ldots,r'\}$, we identify $\mathfrak{g}'_{i'}$ with the image $\psi(\mathfrak{g}'_{i'})$, the automorphisms $\theta'_{i'}\in\Gamma'_{i'}$ with $\Psi(\theta'_{i'})$, and $g'_1\oplus\cdots\oplus g'_{r'}$ with the pushforward metric $\psi_*(g'_1)\oplus\cdots\oplus\psi_*(g'_{r'})(=g)$.
In the same manner as \eqref{sigma,tau}, we can take the generators $\sigma'_{i'}$ and $\tau'_{i'}$ of $\Gamma'_{i'}$ to satisfy $\sigma=\sigma'_{1}\times\cdots\times\sigma'_{r'}$ and $\tau=\tau'_{1}\times\cdots\times\tau'_{r'}$ for the generators $\sigma$, $\tau$ of $\Gamma$.
Based on this situation, what we want to prove is $r=r'$ and $\mathfrak{g}_i=\mathfrak{g'}_{i}$ for all $i\in\{1,\ldots,r\}$ up to permutation.
Indeed, if they hold, then we get $\sigma_i=\sigma'_i$ and $\tau_i=\tau'_i$ from $\sigma=\sigma'_{1}\times\cdots\times\sigma'_{r'}$, $\tau=\tau'_{1}\times\cdots\times\tau'_{r'}$ and \eqref{sigma,tau}. 
Hence we obtain $\Gamma_i=\Gamma'_{i}$.
Also, $g=g'_1\oplus\cdots\oplus g'_{r'}$ and $g_i=g|_{\mathfrak{g}_i\times\mathfrak{g}_i}$ imply $g_i=g'_i$.
Now, it is easy to see that
$(\mathfrak{g'}_{i'}\cap\mathfrak{g}_{i},\langle \sigma_{i}|_{\mathfrak{g'}_{i'}\cap\mathfrak{g}_{i}},\tau_{i}|_{\mathfrak{g'}_{i'}\cap\mathfrak{g}_{i}}\rangle)$ is a $\langle \sigma_{i}|_{\mathfrak{g'}_{i'}\cap\mathfrak{g}_{i}},\tau_{i}|_{\mathfrak{g'}_{i'}\cap\mathfrak{g}_{i}}\rangle$-symmetric Lie algebra for all $i'\in\{1,\ldots,r'\}$ and $i\in\{1,\ldots,r\}$.
Indeed, $\mathfrak{g'}_{i'}\cap\mathfrak{g}_{i}$ is an ideal of $\mathfrak{g}_{i}$.
Also, $\sigma_i(\mathfrak{g'}_{i'}\cap\mathfrak{g}_{i})=\sigma(\mathfrak{g'}_{i'}\cap\mathfrak{g}_{i})\subset\sigma(\mathfrak{g'}_{i'})\cap\sigma(\mathfrak{g}_{i})=\sigma'_{i'}(\mathfrak{g'}_{i'})\cap\sigma_i(\mathfrak{g}_{i})=\mathfrak{g'}_{i'}\cap\mathfrak{g}_{i}$ holds.
Similarly, $\tau_i(\mathfrak{g'}_{i'}\cap\mathfrak{g}_{i})\subset\mathfrak{g'}_{i'}\cap\mathfrak{g}_{i}$ holds.
From $\Gamma_i$-irreducibility of $(\mathfrak{g}_i,\Gamma_i)$, we have $\mathfrak{g'}_{i'}\cap\mathfrak{g}_{i}=\{0\}$ or $\mathfrak{g'}_{i'}\cap\mathfrak{g}_{i}=\mathfrak{g}_{i}$, implying $\mathfrak{g'}_{i'}\cap\mathfrak{g}_{i}=\{0\}$ or $\mathfrak{g}_{i}\subset\mathfrak{g'}_{i'}$.
By the same argument, it is also proved that $\mathfrak{g}_{i}\cap\mathfrak{g'}_{{i'}}=\{0\}$ or $\mathfrak{g'}_{{i'}}\subset\mathfrak{g}_{i}$ hold.
Hence, for each $i'_0\in\{1,\ldots,r'\}$ there exists an $i_0\in\{1,\ldots,r\}$ such that $\mathfrak{g'}_{i'_0}\subset\mathfrak{g}_{i_0}$.
Moreover, for this $i_0$, there exists an $i'_1\in\{1,\ldots,r'\}$ such that $\mathfrak{g}_{i_0}\subset\mathfrak{g'}_{i'_1}$.
Then we have $\mathfrak{g'}_{i'_0}\subset\mathfrak{g}_{i_0}\subset\mathfrak{g'}_{i'_1}$, and thus $\mathfrak{g'}_{i'_0}=\mathfrak{g'}_{i'_1}=\mathfrak{g}_{i_0}$ and $i'_0=i'_1$ hold.
This implies that for each $i'\in\{1,\ldots,r'\}$ there exists a unique $i\in\{1,\ldots,r\}$ such that $\mathfrak{g}_i=\mathfrak{g}'_{i'}$, and in particular, $r=r'$.
The proof is now complete.
\end{proof}
Note that $\Gamma_{i}$ in \eqref{decom._of_semisimple_Riem_Lie_alg.} can be isomorphic to not only $\mathbb{Z}_2\times\mathbb{Z}_2$ but also $\mathbb{Z}_2$. 
Indeed, $(\mathfrak{g}_1\oplus\mathfrak{g}_2,\langle\sigma\times\id_{\mathfrak{g}_2},\id_{\mathfrak{g}_1}\times\tau\rangle,g_1\oplus g_2)\cong(\mathfrak{g}_1,\langle\sigma\rangle,g_1)\oplus(\mathfrak{g}_2,\langle\tau\rangle,g_2)$ gives the example.
The examples of nontrivial type are given in subsection~\ref{subsection_The infinitesimal isotropy representation of the flag manifolds}.
The following example demonstrates that the assumption in theorem~\ref{decomposition of semisimple Lie algebra of nontrival type} is indispensable, that is, 
there can exist a non-perpendicular decomposition of  $(\mathfrak{g},\Gamma,g)$ if it is not of nontrivial type.
\begin{example}[An example of a non-perpendicular decomposition]\label{An example of a non-perpendicular decomposition}
Consider the flag manifold $SO(4)/S(O(1)\times O(1)\times O(1)\times O(1))$. 
Recall example~\ref{flag manifolds} and use the notations there.
We write $E_{ij}$ ($1 \le i \neq j \le 4$) for the skew-symmetric matrices whose $(i,j)$-entry is $1$, $(j,i)$-entry is $-1$, and all other entries are $0$ and 
define
\begin{align*}
&I_1 = \frac{1}{2} (E_{12} + E_{34}),\;\;
I_2 = \frac{1}{2} (E_{13} - E_{24}),\;\;
I_3 = -\frac{1}{2} (E_{14} + E_{23}) ,\\
&J_1 = \frac{1}{2} (E_{12} - E_{34}) ,\;\;
J_2 = \frac{1}{2} (E_{13} + E_{24}) ,\;\;
J_3 = \frac{1}{2} (E_{14} - E_{23}) .
\end{align*}
The simultaneous eigenspaces given by $\sigma$ and $\tau$ are
{\small
\begin{align*}
&\mathfrak{g}^{(+1,+1)}=\{0\}, \;\; 
\mathfrak{g}^{(+1,-1)}=\operatorname{span}\{I_1,J_1\}, \;\; \mathfrak{g}^{(-1,+1)}=\operatorname{span}\{I_2,J_2\}, \;\;
\mathfrak{g}^{(-1,-1)}=\operatorname{span}\{I_3,J_3\}.
\end{align*}}
On the other hand, the Lie bracket relations of $\mathfrak{g}$ are given by $[I_1,I_2]=I_3$, $[I_2,I_3]=I_1$, $[I_3,I_1]=I_2$, $[J_1,J_2]=J_3$, $[J_2,J_3]=J_1$, $[J_3,J_1]=J_2$ and $[I_k,J_l]=0$, $1\leq k,l\leq 3$.
Hence $\mathfrak{g}_I:=\operatorname{span}\{I_1,I_2,I_3\}$ and $\mathfrak{g}_J:=\operatorname{span}\{J_1,J_2,J_3\}$ give the decomposition of ideals of $\mathfrak{g}$.
Both $\mathfrak{g}_I$ and $\mathfrak{g}_J$ are isomorphic to $\mathfrak{so}(3)$ as a Lie algebra.
Thus we have decomposition
\begin{equation*}
(\mathfrak{so}(4),\langle\sigma,\tau\rangle)\cong(\mathfrak{g}_I,\langle\sigma_I,\tau_I\rangle)\oplus(\mathfrak{g}_J,\langle\sigma_J,\tau_J\rangle),
\end{equation*}
where $\sigma_I=\sigma|_{\mathfrak{g}_I}$, $\tau_I=\tau|_{\mathfrak{g}_I}$, $\sigma_J=\sigma|_{\mathfrak{g}_J}$ and $\tau_J=\tau|_{\mathfrak{g}_J}$.
Note that $(\mathfrak{g}_I,\langle\sigma_I,\tau_I\rangle)$ and $(\mathfrak{g}_J,\langle\sigma_J,\tau_J\rangle)$ are clearly $\mathbb{Z}_2\times\mathbb{Z}_2$-irreducible.
To simplify notation, we write $V_1$, $V_2$ and $V_3$ for $\mathfrak{g}^{(+1,-1)}$, $\mathfrak{g}^{(-1,+1)}$ and $\mathfrak{g}^{(-1,-1)}$, respectively.
Now, we define a symmetric bilinear form $g_k$ on $V_k$ by $g_k(I_k,I_k)=\kappa_k$, $g_k(J_k,J_k)=\lambda_k$, $g_k(I_k,J_k)=c_k$, $\kappa_k,\lambda_k,c_k\in\mathbb{R}$ and $\kappa_k,\lambda_k>0$, for each $k=1,2,3$.
\begin{lemma}
$g_k$ is positive definite if and only if $-\sqrt{\kappa_k\lambda_k}<c_k<\sqrt{\kappa_k\lambda_k}$.
\end{lemma}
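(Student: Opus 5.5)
Since $I_k$ and $J_k$ are linearly independent (they are orthogonal in the Frobenius inner product and nonzero), they form a basis of $V_k$. Relative to this basis, $g_k$ has Gram matrix
\[
G_k=\begin{pmatrix}\kappa_k & c_k\\ c_k & \lambda_k\end{pmatrix}.
\]
So the claim is the standard criterion for a real symmetric $2\times 2$ matrix to be positive definite. I would prove the two directions directly, using the quadratic form $q(x,y)=g_k(xI_k+yJ_k,xI_k+yJ_k)=\kappa_k x^2+2c_kxy+\lambda_k y^2$.

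For the implication ``$\Leftarrow$'', I will complete the square using $\kappa_k>0$:
\[
q(x,y)=\kappa_k\Bigl(x+\tfrac{c_k}{\kappa_k}y\Bigr)^2+\tfrac{\kappa_k\lambda_k-c_k^2}{\kappa_k}\,y^2 .
\]
If $|c_k|<\sqrt{\kappa_k\lambda_k}$, then both coefficients are positive, so $q\ge 0$. Equality $q=0$ forces first $y=0$ and then $x=0$. Hence $g_k$ is positive definite.

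For the implication ``$\Rightarrow$'', I argue by contraposition. Suppose $c_k^2\ge\kappa_k\lambda_k$. Evaluate $q$ at the nonzero vector $(x,y)=(-c_k,\kappa_k)$ (nonzero because $\kappa_k>0$). This gives $q=\kappa_k(\kappa_k\lambda_k-c_k^2)\le 0$, so $g_k$ is not positive definite. Equivalently, one can note that $\det G_k=\kappa_k\lambda_k-c_k^2$ is the product of the two eigenvalues of $G_k$. Since the trace $\kappa_k+\lambda_k$ is positive, positive definiteness is equivalent to $\det G_k>0$, i.e.\ $c_k^2<\kappa_k\lambda_k$.

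No step is a genuine obstacle. The only point requiring care is the opening observation that $\{I_k,J_k\}$ is a basis of $V_k$, so that $g_k$ is well defined on all of $V_k$ by bilinear extension. This follows from $\dim V_k=2$ as listed above and the evident linear independence of $I_k$ and $J_k$.
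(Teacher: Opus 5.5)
Your proof is correct. The paper uses exactly the argument you give as an aside at the end. It writes the Gram matrix $A_{g_k}$ in the basis $\{I_k,J_k\}$, notes that $\tr A_{g_k}=\kappa_k+\lambda_k>0$, and concludes that positive definiteness is equivalent to $\det A_{g_k}=\kappa_k\lambda_k-c_k^2>0$.

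Your main route differs slightly. You complete the square for sufficiency and evaluate at the test vector $(-c_k,\kappa_k)$ for necessity. This is an elementary, eigenvalue-free proof of the same $2\times 2$ criterion. The paper's version is shorter, but it relies on the symmetric matrix having real eigenvalues whose sum and product are the trace and determinant. Yours uses only $\kappa_k>0$ and explicit algebra.
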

\begin{proof}
The matrix representation $A_{g_k}$ of $g_k$ with respect to the basis $\{I_k, J_k\}$ is given by
$\begin{pmatrix}
\kappa_k & c_k  \\
c_k & \lambda_k
\end{pmatrix}$.
Since 
$\tr A_{g_k}=\kappa_k+\lambda_k>0$, the bilinear form $g_k$ is positive definite if and only if 
$\det A_{g_k}
>0$.
Thus we obtain the conclusion.
\end{proof}
Let us assume that $-\sqrt{\kappa_k\lambda_k}<c_k<\sqrt{\kappa_k\lambda_k}$ for each $k=1,2,3$.
We define an inner product $g$ on $\mathfrak{m}=V_1\oplus V_2\oplus V_3$ by $g=g_1\oplus g_2\oplus g_3$ (orthogonal direct sum).
Then $(\mathfrak{g},\langle\sigma,\tau\rangle,g)$ is a Riemannian $\mathbb{Z}_2\times\mathbb{Z}_2$-symmetric Lie algebra.
Let us denote $V_{I_k}:=\operatorname{span}\{I_k\}$ and $V_{J_k}:=\operatorname{span}\{J_k\}$ for each $k$.
Then $(\mathfrak{g}_I,\langle\sigma_I,\tau_I\rangle,g|_{\mathfrak{g}_I\times\mathfrak{g}_I})$ and $(\mathfrak{g}_J,\langle\sigma_J,\tau_J\rangle,g|_{\mathfrak{g}_J\times\mathfrak{g}_J})$ are  Riemannian $\mathbb{Z}_2\times\mathbb{Z}_2$-symmetric Lie algebras.
However, the decomposition $V_k=V_{I_k}\oplus V_{J_k}$ is not orthogonal with respect to $g_k$ if $c_k\neq0$.
Thus if $c_k\neq0$, we obtain
\begin{equation*}
(\mathfrak{so}(4),\langle\sigma,\tau\rangle,g)\ncong(\mathfrak{so}(3),\langle\sigma_I,\tau_I\rangle,g|_{\mathfrak{g}_I\times\mathfrak{g}_I})\oplus(\mathfrak{so}(3),\langle\sigma_J,\tau_J\rangle,g|_{\mathfrak{g}_J\times\mathfrak{g}_J}).
\end{equation*}
\end{example}
\subsection{Semisimple irreducible orthogonal $\Gamma$-symmetric Lie algebras}
Irreducible orthogonal $\Gamma$-symmetric Lie algebras with $\Gamma\cong\mathbb{Z}_2$ or $\mathbb{Z}_2\times\mathbb{Z}_2$ are classified by the following lemmas. 
Note that they are effective.
\renewcommand{\theenumi}{\arabic{enumi}}
\begin{lemma}[cf. \cite{helgason1979differential}]\label{types of irr.-O.S.L.}
    A semisimple irreducible orthogonal symmetric Lie algebra $(\mathfrak{g},\sigma)$ is isomorphic to one of the followings: 
    \begin{enumerate}
    \item $\mathfrak{g}$ is simple. \label{1st type of O.S.L.}
    \item $(\mathfrak{g}_0\oplus\mathfrak{g}_0,\theta)$. Here, $\mathfrak{g}_0$ is a compact simple Lie algebra and an involution $\theta$ denotes
\begin{equation*}
\theta(X,Y)=(Y,X) \;\;\;(X,Y\in\mathfrak{g}_0).
\end{equation*}\label{2nd type of O.S.L.}
    \end{enumerate}
\end{lemma}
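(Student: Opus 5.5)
The plan is the classical argument for semisimple irreducible orthogonal symmetric Lie algebras, carried out directly from the definition of irreducibility. Let $(\mathfrak{g},\sigma)$ be semisimple, irreducible and orthogonal. We decompose $\mathfrak{g}=\mathfrak{a}_1\oplus\cdots\oplus\mathfrak{a}_n$ into simple ideals. This decomposition is unique, because the simple ideals of a semisimple Lie algebra are exactly its minimal ideals. Since $\sigma$ is an automorphism, it maps minimal ideals to minimal ideals, so it permutes the $\mathfrak{a}_j$. As $\sigma^2=\id$, this permutation is a product of disjoint transpositions and fixed points. For each orbit, the sum of the ideals in it is a $\sigma$-stable ideal. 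By $\Gamma$-irreducibility (definition~\ref{def of irreducible}) there is exactly one orbit. So either $n=1$, in which case $\mathfrak{g}$ is simple and we are in case~\ref{1st type of O.S.L.}, or $n=2$ with $\sigma\mathfrak{a}_1=\mathfrak{a}_2$.

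In the case $n=2$, put $\mathfrak{g}_0:=\mathfrak{a}_1$. The restriction $\sigma|_{\mathfrak{a}_1}:\mathfrak{a}_1\to\mathfrak{a}_2$ is a Lie algebra isomorphism. Define
\[
\varphi:\mathfrak{g}_0\oplus\mathfrak{g}_0\to\mathfrak{g},\qquad \varphi(X,Y):=X+\sigma Y .
\]
This is a Lie algebra isomorphism, because $\mathfrak{a}_1$ and $\mathfrak{a}_2$ commute. Using $\sigma^2=\id$, we get
\[
\sigma\varphi(X,Y)=\sigma X+Y=\varphi(Y,X)=\varphi\theta(X,Y).
\]
Hence $(\varphi,\Phi)$, with $\Phi$ the unique isomorphism $\langle\theta\rangle\to\langle\sigma\rangle$, is an isomorphism of symmetric Lie algebras in the sense of (A)$'$.

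It remains to show that $\mathfrak{g}_0$ is compact, and this is where orthogonality is used. Under $\varphi$, the fixed algebra $\mathfrak{k}=\mathfrak{g}^{\sigma}$ corresponds to the diagonal $\{(X,X)\}$, which is isomorphic to $\mathfrak{g}_0$. By condition (i)$'$, $\ad_{\mathfrak{g}}(\mathfrak{k})$ generates a compact subgroup of $\GL(\mathfrak{g})$. Restrict this action to the ideal $\mathfrak{a}_1$. The diagonal element $(X,X)$ acts on $\mathfrak{a}_1$ by $\ad_{\mathfrak{g}_0}X$. Therefore $\ad_{\mathfrak{g}_0}(\mathfrak{g}_0)$ generates a compact subgroup of $\GL(\mathfrak{g}_0)$; more precisely, it is the image of a compact group under restriction, and it is closed. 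Since $\mathfrak{g}_0$ is semisimple, this forces $\mathfrak{g}_0$ to be compact, with negative definite Killing form. This can be seen the same way as in the proof of lemma~\ref{property_of_semisimple_Riemannian_Gamma-symmetric_Lie_algebra}\ref{B-negative deginite on k}: eigenvalues of $\ad X$ are purely imaginary, so $B(X,X)\le 0$, with equality only on the center, which is zero.

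The main subtlety is this compactness step, specifically ensuring that the restricted group is compact rather than merely contained in a compact group. Working with the invariant inner product avoids the issue: restricting the $\ad(\mathfrak{k})$-invariant inner product to $\mathfrak{a}_1$ gives an $\ad(\mathfrak{g}_0)$-invariant inner product, and the Killing form argument then applies directly.
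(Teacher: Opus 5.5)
Your proof is correct and is essentially the classical argument behind the citation \cite{helgason1979differential}; the paper itself gives no proof. As in that argument, $\sigma$ permutes the simple ideals and irreducibility leaves a single orbit; in the two-ideal case, $X+\sigma Y$ identifies $\mathfrak{g}$ with $(\mathfrak{g}_0\oplus\mathfrak{g}_0,\theta)$, and compactness of $\mathfrak{g}_0$ follows because $\mathfrak{k}\cong\mathfrak{g}_0$ is compactly embedded. The one difference is that you use the paper's $\Gamma$-irreducibility (no proper nonzero $\sigma$-stable ideal), which makes the one-orbit step immediate; with Helgason's notion (irreducible action of $\ad_{\mathfrak{g}}(\mathfrak{k})$ on $\mathfrak{m}$) one would also need effectiveness to rule out a $\sigma$-stable ideal contained in $\mathfrak{k}$.
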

\vspace{-20pt}
\begin{lemma}[cf. \cite{ikawa2025intersectionrealflagmanifolds},  \cite{Matsuki}]\label{types of irr.-Z2Z2}
     A semisimple irreducible orthogonal $\mathbb{Z}_2\times\mathbb{Z}_2$-symmetric Lie algebra $(\mathfrak{g},\mathbb{Z}_2\times\mathbb{Z}_2)$ is isomorphic to one of the followings: 
     \begin{enumerate}
\item \label{simples-types of irr.-Z2Z2}$\mathfrak{g}$ is simple.
\item \label{two sum of simples-types of irr.-Z2Z2}
    $(\mathfrak{g}_0\oplus\mathfrak{g}_0,\theta,\eta)$. Here, $\mathfrak{g}_0$ is a simple Lie algebra and two involutions $\theta$, $\eta$ of $\mathfrak{g}_0\oplus\mathfrak{g}_0$ are given by
    \begin{equation*}
        \theta(X,Y)=(\theta_0(X),\theta_0(Y)),\;\eta(X,Y)=(Y,X) \;\;(X,Y\in\mathfrak{g}_0),
        \end{equation*}
    where the pair $(\mathfrak{g}_0,\theta_0)$ is an orthogonal symmetric Lie algebra.
\item \label{four sum of simples-types of irr.-Z2Z2}
$(\mathfrak{g}_0\oplus\mathfrak{g}_0\oplus\mathfrak{g}_0\oplus\mathfrak{g}_0,\theta,\eta)$. Here, $\mathfrak{g}_0$ is a compact simple Lie algebra and two involutions $\theta$, $\eta$ of $\mathfrak{g}_0\oplus\mathfrak{g}_0\oplus\mathfrak{g}_0\oplus\mathfrak{g}_0$ are given by
\begin{equation*}
\theta(X,Y,Z,W)=(Y,X,W,Z), \;
\eta(X,Y,Z,W)=(Z,W,X,Y)
\;\;(X,Y,Z,W\in\mathfrak{g}_0).
\end{equation*}
     \end{enumerate}
\end{lemma}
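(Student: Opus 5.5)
We prove the lemma by looking at how $\Gamma$ permutes the simple ideals of $\mathfrak{g}$ and then splitting into cases by orbit size. Since $\mathfrak{g}$ is semisimple, it decomposes uniquely as a direct sum of simple ideals $\mathfrak{g}=\mathfrak{s}_1\oplus\cdots\oplus\mathfrak{s}_n$. Every automorphism of $\mathfrak{g}$ permutes these simple ideals, so $\Gamma\cong\mathbb{Z}_2\times\mathbb{Z}_2$ acts on the set $\{\mathfrak{s}_1,\ldots,\mathfrak{s}_n\}$. The sum of the ideals in any $\Gamma$-orbit is a $\Gamma$-stable ideal. By $\Gamma$-irreducibility (definition~\ref{def of irreducible}) the action is therefore transitive, and $n=|\Gamma/H|\in\{1,2,4\}$, where $H$ is the stabilizer of $\mathfrak{s}_1$. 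Put $\mathfrak{g}_0:=\mathfrak{s}_1$. The case $n=1$ is type~\ref{simples-types of irr.-Z2Z2}.

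\textbf{Case $n=2$.} Here $H$ has order $2$. We change the generators of $\Gamma$, which is permitted because the isomorphism $(\varphi,\Phi)$ may involve a group automorphism $\Phi$. We take $\theta$ with $H=\langle\theta\rangle$ and $\eta\notin H$, so that $\eta\mathfrak{s}_1=\mathfrak{s}_2$. Define
\[
\varphi:\mathfrak{g}_0\oplus\mathfrak{g}_0\to\mathfrak{g},\qquad (X,Y)\mapsto X+\eta Y .
\]
This is a Lie algebra isomorphism because $\eta$ is an automorphism and $[\mathfrak{s}_1,\mathfrak{s}_2]=\{0\}$.

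\begin{itemize}
\item Since $\eta^2=\id$, the involution $\eta$ corresponds to the swap $(X,Y)\mapsto(Y,X)$.
\item Since $\theta\eta=\eta\theta$ and $\theta$ preserves $\mathfrak{s}_1$, the involution $\theta$ corresponds to $(\theta_0X,\theta_0Y)$ with $\theta_0:=\theta|_{\mathfrak{s}_1}$.
\item $\theta_0\neq\id$: otherwise $\theta$ would be trivial on $\mathfrak{s}_1$ and hence also on $\eta\mathfrak{s}_1$, contradicting the injectivity of $\rho$.
\item The fixed algebra is $\mathfrak{k}=\{(X,X)\mid X\in\mathfrak{g}_0^{\theta_0}\}$. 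On each factor, $\ad_{\mathfrak{g}}(\mathfrak{k})$ acts as $\ad_{\mathfrak{g}_0}(\mathfrak{g}_0^{\theta_0})$. Restricting the compact connected group generated by $\ad_{\mathfrak{g}}(\mathfrak{k})$ to the invariant subspace $\mathfrak{s}_1$ is a continuous homomorphism, so its image is compact. Hence $(\mathfrak{g}_0,\theta_0)$ is orthogonal, and we obtain type~\ref{two sum of simples-types of irr.-Z2Z2}.
\end{itemize}

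\textbf{Case $n=4$.} Here $H$ is trivial, so $\mathfrak{s}_1,\theta\mathfrak{s}_1,\eta\mathfrak{s}_1,\theta\eta\mathfrak{s}_1$ are the four simple ideals. Define
\[
\varphi(X,Y,Z,W):=X+\theta Y+\eta Z+\theta\eta W .
\]
Using $\theta^2=\eta^2=\id$ and $\theta\eta=\eta\theta$, one checks directly that $\theta$ pulls back to $(Y,X,W,Z)$ and $\eta$ to $(Z,W,X,Y)$. Then $\mathfrak{k}$ is the diagonal copy of $\mathfrak{g}_0$, and $\ad_{\mathfrak{g}}(\mathfrak{k})$ restricted to the first factor is all of $\ad_{\mathfrak{g}_0}(\mathfrak{g}_0)$. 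By condition~\ref{compactivity of adjoint action of the Lie algebra k}, the group $\Ad(\mathfrak{g}_0)$ is contained in a compact group, so its closure is compact. It follows that the Killing form of $\mathfrak{g}_0$ is negative semidefinite, hence negative definite by simplicity, and $\mathfrak{g}_0$ is compact. This gives type~\ref{four sum of simples-types of irr.-Z2Z2}.

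The main obstacle is the bookkeeping in the two non-simple cases: verifying that the explicit maps $\varphi$ intertwine $\theta,\eta$ with exactly the stated involutions (including the relabeling of generators via $\Phi$ in case $n=2$), and transporting the compactness condition~\ref{compactivity of adjoint action of the Lie algebra k} to a single factor. For the latter, the key point is that restriction to an $\ad(\mathfrak{k})$-invariant ideal is a continuous homomorphism, so it sends the compact connected group generated by $\ad_{\mathfrak{g}}(\mathfrak{k})$ onto a compact group.
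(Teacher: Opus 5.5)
Your argument is correct and complete. The paper gives no proof of this lemma; it is quoted from Ikawa et al.\ and Matsuki, where it appears in the language of commutative symmetric triads. So there is no in-paper argument to compare against, and your proposal supplies what the paper omits.

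Your route is the standard orbit--stabilizer reduction:
\begin{itemize}
\item let $\Gamma$ permute the simple ideals;
\item use $\Gamma$-irreducibility to force a single orbit of size $|\Gamma/H|\in\{1,2,4\}$;
\item write down the explicit intertwining isomorphisms $(X,Y)\mapsto X+\eta Y$ and $(X,Y,Z,W)\mapsto X+\theta Y+\eta Z+\theta\eta W$.
\end{itemize}
This also verifies side conditions that the statement asserts but the paper never checks. In type 2 you show that $\theta_0\neq\id$ and that $(\mathfrak g_0,\theta_0)$ is orthogonal. In type 3 you show that $\mathfrak g_0$ must be compact. Both come from restricting the compact group generated by $\ad_{\mathfrak g}(\mathfrak k)$ to the invariant ideal $\mathfrak s_1$, which is the right mechanism.

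Two small points are worth making explicit.

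\emph{Case $n=2$.} The fact that $\theta$ also stabilizes $\mathfrak s_2=\eta\mathfrak s_1$ follows from $\theta\eta=\eta\theta$. You use this implicitly when pulling $\theta$ back to $(\theta_0X,\theta_0Y)$.

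\emph{Case $n=4$.} The Killing-form step is cleanest via an inner product on $\mathfrak g_0$ invariant under the compact group $\overline{\langle e^{\ad\mathfrak g_0}\rangle}$, obtained by averaging. Then:
\begin{itemize}
\item each $\ad X$ is skew-symmetric, so $B(X,X)=\tr(\ad X)^2\le 0$;
\item $B(X,X)=0$ forces $\ad X=0$, hence $X=0$ because $\mathfrak g_0$ has trivial center.
\end{itemize}
This is what ``negative definite by simplicity'' amounts to.
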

Simple orthogonal symmetric Lie algebras have been classified (cf. \cite{helgason1979differential}), and 
simple $\mathbb{Z}_2\times\mathbb{Z}_2$-symmetric Lie algebras have also been done in \cite{Bahturin&Goze} and \cite{Kollross}.
Therefore, any semisimple $\Gamma$-irreducible orthogonal $\Gamma$-symmetric Lie algebra can be determined by these results.
Furthermore, by investigating the infinitesimal isotropy representation for each of them, we can identify those of nontrivial type.
In addition, the irreducible decomposition of the infinitesimal isotropy representation determines the inner products admitted on its standard complement.
By this procedure, together with theorem~\ref{decomposition of semisimple Lie algebra of nontrival type}, the classification of effective semisimple Riemannian $\mathbb{Z}_2\times\mathbb{Z}_2$-symmetric Lie algebras of nontrivial type can be completed.

As part of this procedure, we next consider the infinitesimal isotropy representations of the cases~\ref{two sum of simples-types of irr.-Z2Z2} and \ref{four sum of simples-types of irr.-Z2Z2} in lemma~\ref{types of irr.-Z2Z2}.
It is well-known that the representations of semisimple irreducible orthogonal symmetric Lie algebras are nontrivial and irreducible (cf. \cite{helgason1979differential}).
In the case~\ref{two sum of simples-types of irr.-Z2Z2} in lemma~\ref{types of irr.-Z2Z2}, the representations on simultaneous eigenspaces $\mathfrak{g}^{(+1,-1)}=\{(X,-X)\in\mathfrak{g}_0\oplus\mathfrak{g}_0\mid X\in\mathfrak{g}_0^{\theta_0}\}$, $\mathfrak{g}^{(-1,+1)}=\{(X,X)\in\mathfrak{g}_0\oplus\mathfrak{g}_0\mid X\in\mathfrak{g}_0^{-\theta_0}\}$ and $\mathfrak{g}^{(-1,-1)}=\{(X,-X)\in\mathfrak{g}_0\oplus\mathfrak{g}_0\mid X\in\mathfrak{g}_0^{-\theta_0}\}$ are equivalent to the adjoint representation of $\mathfrak{g}_0^{\theta_0}$ on $\mathfrak{g}_0^{\theta_0}$ or that on $\mathfrak{g}_0^{-\theta_0}$.
Similarly, in the case~\ref{four sum of simples-types of irr.-Z2Z2} in lemma~\ref{types of irr.-Z2Z2}, the representations on simultaneous eigenspaces are equivalent to the adjoint representation of $\mathfrak{g}_0$ on $\mathfrak{g}_0$.
Hence we obtain the next proposition:
\begin{proposition}\label{the infinitesimal isotropy representation about irreducible Lie algebras}
In the cases~\ref{two sum of simples-types of irr.-Z2Z2} and \ref{four sum of simples-types of irr.-Z2Z2} in lemma~\ref{types of irr.-Z2Z2}, the infinitesimal isotropy representation satisfies the following properties:
\begin{itemize}
    \item In the case~\ref{two sum of simples-types of irr.-Z2Z2} with $\dim \mathfrak{g}_0^{\theta_0}=1$,  the representation on $\mathfrak{g}^{(+1,-1)}$ is trivial. 
    Also, the representations on $\mathfrak{g}^{(-1,+1)}$ and $\mathfrak{g}^{(-1,-1)}$ are nontrivial and irreducible. 
    \item In the case~\ref{two sum of simples-types of irr.-Z2Z2} with $\dim \mathfrak{g}_0^{\theta_0}\geq2$, the representation on each simultaneous eigenspace is nontrivial and irreducible. 
    \item In the cases~\ref{four sum of simples-types of irr.-Z2Z2}, the representation on each simultaneous eigenspace is nontrivial and irreducible.
\end{itemize}    
\end{proposition}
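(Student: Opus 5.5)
The plan is to compute the three simultaneous eigenspaces explicitly in each case, identify $\mathfrak{k}$ and its action on each of them, and reduce everything to the adjoint action of a compact Lie algebra on itself, or to the isotropy representation of the irreducible orthogonal symmetric Lie algebra $(\mathfrak{g}_0,\theta_0)$. Lemma~\ref{types of irr.-O.S.L.} and the known fact that those isotropy representations are nontrivial and irreducible then settle the claims.

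\emph{Case~\ref{two sum of simples-types of irr.-Z2Z2}.} First, $\mathfrak{k}=\mathfrak{g}^{\theta}\cap\mathfrak{g}^{\eta}=\{(X,X)\mid X\in\mathfrak{g}_0^{\theta_0}\}$, so $\mathfrak{k}\cong\mathfrak{g}_0^{\theta_0}$ via $X\mapsto(X,X)$. The eigenspaces are as listed before the proposition. The bracket $[(Z,Z),(X,\pm X)]=([Z,X],\pm[Z,X])$ shows that these representations are equivalent to:
\begin{itemize}
\item[] $\ad(\mathfrak{g}_0^{\theta_0})$ on $\mathfrak{g}_0^{\theta_0}$, for $\mathfrak{g}^{(+1,-1)}$;
\item[] the isotropy representation of $\mathfrak{g}_0^{\theta_0}$ on $\mathfrak{g}_0^{-\theta_0}$, for $\mathfrak{g}^{(-1,+1)}$ and $\mathfrak{g}^{(-1,-1)}$.
\end{itemize}

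Here $(\mathfrak{g}_0,\theta_0)$ is an irreducible orthogonal symmetric Lie algebra with $\mathfrak{g}_0$ simple, hence effective. So its isotropy representation on $\mathfrak{g}_0^{-\theta_0}$ is nontrivial and irreducible by the cited fact, which handles $\mathfrak{g}^{(-1,\pm1)}$.

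For $\mathfrak{g}^{(+1,-1)}$ we use the adjoint representation of $\mathfrak{g}_0^{\theta_0}$. If $\dim\mathfrak{g}_0^{\theta_0}=1$, it is abelian and the adjoint action is trivial. If $\dim\mathfrak{g}_0^{\theta_0}\ge 2$, we must show that $\mathfrak{g}_0^{\theta_0}$ acts nontrivially and irreducibly on itself. Irreducibility means its ideals are only $0$ and itself, i.e.\ $\mathfrak{g}_0^{\theta_0}$ is simple (noncommutative). This step is the main obstacle, because $\mathfrak{g}_0^{\theta_0}$ need not be simple in general. For instance, $\mathfrak{so}(4)\cong\mathfrak{su}(2)\oplus\mathfrak{su}(2)$ arises as a fixed algebra, and $\mathfrak{u}(n)$ has a center.

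I would first try to read the statement as asserting irreducibility over the isotropy group together with $\Gamma$, or to argue via the classification of symmetric pairs. More likely, the honest route is to record that "irreducible" should be understood up to this caveat, or restricted to those $\theta_0$ with $\mathfrak{g}_0^{\theta_0}$ simple. Concretely, I would check the classification (Helgason's tables) to see which $\theta_0$ have a simple fixed algebra, and flag the others (e.g.\ Hermitian types with center) as exceptions needing separate treatment. Nontriviality is clear in any case as soon as $\mathfrak{g}_0^{\theta_0}$ is nonabelian.

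\emph{Case~\ref{four sum of simples-types of irr.-Z2Z2}.} Here $\mathfrak{k}=\{(X,X,X,X)\}\cong\mathfrak{g}_0$. The eigenspaces are $\{(X,X,-X,-X)\}$, $\{(X,-X,X,-X)\}$ and $\{(X,-X,-X,X)\}$ with $X\in\mathfrak{g}_0$, and $\mathfrak{k}$ acts on each by $\ad_{\mathfrak{g}_0}$. Since $\mathfrak{g}_0$ is compact simple, the adjoint representation is irreducible (invariant subspaces are ideals) and nontrivial (the center is zero).
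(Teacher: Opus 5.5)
Your reduction is the same as the paper's. The paper also identifies the $\mathfrak{k}$-modules $\mathfrak{g}^{(-1,\pm1)}$ in case 2 with the isotropy representation of $(\mathfrak{g}_0,\theta_0)$ on $\mathfrak{g}_0^{-\theta_0}$, identifies $\mathfrak{g}^{(+1,-1)}$ with the adjoint representation of $\mathfrak{g}_0^{\theta_0}$ on itself, and identifies all three modules in case 3 with $\ad_{\mathfrak{g}_0}$. Your eigenspace and bracket computations are correct, and the first and third bullets, together with the $\mathfrak{g}^{(-1,\pm1)}$ part of the second, follow exactly as you say.

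The obstacle you flag in the second bullet is real, and it is a defect of the statement and of the paper's argument, not of yours. The paper infers irreducibility from the fact that isotropy representations of irreducible orthogonal symmetric Lie algebras are irreducible. That fact only covers $\mathfrak{g}_0^{-\theta_0}$. For $\dim\mathfrak{g}_0^{\theta_0}\ge 2$, the adjoint representation of $\mathfrak{g}_0^{\theta_0}$ on itself is irreducible if and only if $\mathfrak{g}_0^{\theta_0}$ is simple.

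A concrete counterexample: take $\mathfrak{g}_0=\mathfrak{su}(3)$ and $\theta_0=\Ad(\operatorname{diag}(1,1,-1))$, so that $\mathfrak{g}_0^{\theta_0}=\mathfrak{s}(\mathfrak{u}(2)\oplus\mathfrak{u}(1))\cong\mathfrak{u}(2)$. Then $(\mathfrak{su}(3)\oplus\mathfrak{su}(3),\theta,\eta)$ is irreducible and orthogonal. Yet for $Z_0=\sqrt{-1}\operatorname{diag}(1,1,-2)$, the vector $(Z_0,-Z_0)\in\mathfrak{g}^{(+1,-1)}$ is annihilated by $\mathfrak{k}$. The pair $\mathfrak{so}(5)\supset\mathfrak{so}(4)$ gives a centerless counterexample.

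Your first alternative reading does not rescue the statement. $\Gamma$ acts by $\pm1$ on each simultaneous eigenspace, and the center of $\mathfrak{g}_0^{\theta_0}$ is preserved by every automorphism, so even a disconnected isotropy group leaves that line invariant. Your second option, restricting to $\theta_0$ with $\mathfrak{g}_0^{\theta_0}$ simple, is the right fix. The corrected statement is: on $\mathfrak{g}^{(+1,-1)}$ the representation is nontrivial, and it is irreducible if and only if $\mathfrak{g}_0^{\theta_0}$ is simple.

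The nontriviality, which you leave conditional on $\mathfrak{g}_0^{\theta_0}$ being nonabelian, is in fact automatic. Suppose $\mathfrak{g}_0^{\theta_0}$ were abelian. Since $\mathfrak{g}_0$ is simple, it would act faithfully on the irreducible module $\mathfrak{g}_0^{-\theta_0}$ by commuting skew-symmetric operators. That forces $\dim\mathfrak{g}_0^{-\theta_0}\le 2$, hence $\dim\mathfrak{g}_0^{\theta_0}\le 1$.

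Note also that in the Hermitian case, the invariant vector above is a nonzero vector of $\mathfrak{m}$ killed by $\mathfrak{k}=[\mathfrak{m},\mathfrak{m}]_{\mathfrak{k}}$. So such algebras are not of nontrivial type, contrary to the remark following the proposition.
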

The latter two cases in proposition~\ref{the infinitesimal isotropy representation about irreducible Lie algebras} are of nontrivial type.
\subsection{Examples of nontrivial type}\label{subsection_The infinitesimal isotropy representation of the flag manifolds}
In this subsection, we give examples of nontrivial type.
We mention the generalized Wallach spaces, observe the flag manifolds and compute the infinitesimal isotropy representations.
\begin{definition}[cf. \cite{Nikonorov}]
A Riemannian $\mathbb{Z}_2\times\mathbb{Z}_2$-symmetric space $(G/K,\Gamma,g)$ is called a generalized Wallach space if $G$ is compact semisimple and if the infinitesimal isotropy representation on each simultaneous eigenspace is irreducible.
\end{definition}
The generalized Wallach spaces are classified in \cite{Nikonorov}. 
They give a lot of examples of nontrivial type.
For instance, the flag manifold $SO(l)/S(O(r_1)\times O(r_2)\times O(r_3)\times O(r_4))$ with $r_1r_2r_3\neq0$ and with $r_4=0$ has three $\mathfrak{k}$-subspaces $\mathfrak{g}^b$, $\mathfrak{g}^c$ and $\mathfrak{g}^d$ on which its representations act nontrivially and irreducibly except for the case $(r_1,r_2,r_3,r_4)=(1,1,1,0)$ or $(2,1,1,0)$ (see \cite[Table 1]{Nikonorov} and example~\ref{The flag manifold with (r_1,r_2,r_3,r_4)=(2,1,1,0)}).
The flag manifolds give examples from the viewpoint of not only the product and decomposition of Riemannian $\Gamma$-symmetric spaces but also nontrivial type.

\begin{example}[The flag manifold with $(r_1,r_2,r_3,r_4)=(2,1,1,0)$]
\label{The flag manifold with (r_1,r_2,r_3,r_4)=(2,1,1,0)}
The $\mathbb{Z}_2\times\mathbb{Z}_2$-symmetric Lie algebra $(\mathfrak{so}(4),\langle\sigma,\tau\rangle)$ given in example~\ref{flag manifolds} is isomorphic to the case~\ref{two sum of simples-types of irr.-Z2Z2} in lemma~\ref{types of irr.-Z2Z2}.
Let us check it.
Let $I_1$, $I_2$, $I_3$, $J_1$, $J_2$ and $J_3$ be matrices defined in example~\ref{An example of a non-perpendicular decomposition}.
Then the simultaneous eigenspaces given by $\sigma$ and $\tau$ are $\mathfrak{k}=\langle I_1+J_1\rangle$, $\mathfrak{g}^{(-1,+1)}=\langle I_2+J_2,I_3+J_3\rangle$, $\mathfrak{g}^{(+1,-1)}=\langle I_2-J_2,I_3-J_3\rangle$ and $\mathfrak{g}^{(-1,-1)}=\langle I_1-J_1\rangle$.
On the other hand, we define $\mathfrak{g}_0=\langle I_1,I_2,I_3\rangle$ and also define $\theta_0\in\Aut(\mathfrak{g}_0)$ by $\theta_0(I_1)=I_1$, $\theta_0(I_2)=-I_2$ and $\theta_0(I_3)=-I_3$.
Then the irreducible $\mathbb{Z}_2\times\mathbb{Z}_2$-symmetric Lie algebra $(\mathfrak{g}_0\oplus\mathfrak{g}_0,\theta,\eta)$ defined by the case~\ref{two sum of simples-types of irr.-Z2Z2} in lemma~\ref{types of irr.-Z2Z2} is obtained by $\theta=\theta_0\times\theta_0$ and the transposition map $\eta$. 
A diffeomorphism $\varphi:\mathfrak{so}(4)\to\mathfrak{g}_0\oplus\mathfrak{g}_0$ and a group isomorphism $\Phi:\langle\sigma,\tau\rangle\to\langle\theta,\eta\rangle$ defined by
\begin{align*}
&\varphi(I_i)=(I_i,0),\quad\varphi(J_i)=(0,I_i)\qquad(i=1,2,3), \\
&\Phi(\tau)=\eta,\quad \Phi(\sigma\tau)=\theta
\end{align*}
give an isomorphism $(\varphi,\Phi)$ from $(\mathfrak{so}(4),\langle\sigma,\tau\rangle)$ to $(\mathfrak{g}_0\oplus\mathfrak{g}_0,\langle\theta,\eta\rangle)$.
Therefore, since $\dim\mathfrak{g}_0^{\theta_0}=\dim\langle I_1\rangle=1$ and proposition~\ref{the infinitesimal isotropy representation about irreducible Lie algebras}, the infinitesimal isotropy representation on $\mathfrak{g}^{(+1,-1)}$ (resp. on $\mathfrak{g}^{(-1,+1)}$ and $\mathfrak{g}^{(-1,-1)}$) is trivial (resp. nontrivial and irreducible). 
\end{example}
According to \cite{Arvanitoyeorgos-Sakane-Statha}, except for the case of $r_1=r_2=r_3=r_4=1$, the flag manifolds have six irreducible $\mathfrak{k}$-subspaces if $r_4\neq0$, which are expressed as 
\begin{align*}
&\left(
\begin{array}{cccc}
0 & A_1 & 0 & 0 \\
-{}^{t}\!A_1 & 0 & 0 & 0 \\
0 & 0 & 0 & 0 \\
0 & 0 & 0 & 0
\end{array}
\right),
\left(
\begin{array}{cccc}
0 & 0 & 0 & 0 \\
0 & 0 & 0 & 0 \\
0 & 0 & 0 & A_2 \\
0 & 0 & -{}^{t}\!A_2 & 0
\end{array}
\right),
\left(
\begin{array}{cccc}
0 & 0 & B_1 & 0 \\
0 & 0 & 0 & 0 \\
-{}^{t}\!B_1 & 0 & 0 & 0 \\
0 & 0 & 0 & 0
\end{array}
\right), \\
&\left(
\begin{array}{cccc}
0 & 0 & 0 & 0 \\
0 & 0 & 0 & B_2 \\
0 & 0 & 0 & 0 \\
0 & -{}^{t}\!B_2 & 0 & 0
\end{array}
\right),
\left(
\begin{array}{cccc}
0 & 0 & 0 & C_1 \\
0 & 0 & 0 & 0 \\
0 & 0 & 0 & 0 \\
-{}^{t}C_1 & 0 & 0 & 0
\end{array}
\right),
\left(
\begin{array}{cccc}
0 & 0 & 0 & 0 \\
0 & 0 & C_2 & 0 \\
0 & -{}^{t}C_2 & 0 & 0 \\
0 & 0 & 0 & 0
\end{array}
\right)
\end{align*}
respectively.
Combining the above, we get the following proposition.
It is divided into cases depending on whether the isotropy subgroup is discrete, whether $l=4$ and whether $r_4=0$ or not.
\begin{proposition}\label{Whether isotropy rep. of the flag mfd is triv. or not.}
Let $SO(l)/S(O(r_1)\times O(r_2)\times O(r_3)\times O(r_4))$ be the flag manifold with $r_1r_2r_3\neq0$.
Then the following properties for the infinitesimal isotropy representation hold:
    \renewcommand{\theenumi}{(\arabic{enumi})}
\begin{enumerate}
    \item If $(r_1,r_2,r_3,r_4)=(1,1,1,0)$, then the infinitesimal isotropy representation on each $\mathfrak{g}^a$ ($a\in\Gamma^{\times}$) is trivial.
    \item If $(r_1,r_2,r_3,r_4)=(2,1,1,0)$, then the infinitesimal isotropy representation on each $\mathfrak{g}^{(+1,-1)}$ and $\mathfrak{g}^{(-1,+1)}$ (resp. on $\mathfrak{g}^{(-1,-1)}$) is nontrivial and irreducible (resp. is trivial).  
    \item \label{isotropy rep. w.r.t. SO(5)/S(O(2)times O(2)times O(1))} 
    If $r_1,r_2\geq2$, $r_3\geq1$ and $r_4=0$, then the infinitesimal isotropy representation on each $\mathfrak{g}^a$ ($a\in\Gamma^{\times}$) is nontrivial and irreducible.
    \item If $(r_1,r_2,r_3,r_4)=(1,1,1,1)$, then the infinitesimal isotropy representation on each $\mathfrak{g}^a$ ($a\in\Gamma^{\times}$) is trivial.
    \item \label{isotropy rep. w.r.t. SO(5)/S(O(2)times O(1)times O(1)times O(1))} 
    If $r_1\geq2$ and $r_2,r_3,r_4\geq1$, then the infinitesimal isotropy representation on each $\mathfrak{g}^a$ ($a\in\Gamma^{\times}$) has two isotropy summands.
\end{enumerate}
\end{proposition}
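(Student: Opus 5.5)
The plan is to go case by case, using the explicit block description of $\mathfrak{k}=\mathfrak{g}^{(+1,+1)}=\mathfrak{so}(r_1)\oplus\cdots\oplus\mathfrak{so}(r_4)$ and of $\mathfrak{g}^b,\mathfrak{g}^c,\mathfrak{g}^d$ from example~\ref{flag manifolds}. The basic observation is that $X=\mathrm{diag}(X_1,\dots,X_4)\in\mathfrak{k}$ acts on an off-diagonal block $M\in\mathcal{M}(r_i,r_j)$ by $M\mapsto X_iM-MX_j$. Thus each block space $\mathcal{M}(r_i,r_j)$ is a $\mathfrak{k}$-submodule isomorphic to the outer tensor product of the standard representations of $\mathfrak{so}(r_i)$ and $\mathfrak{so}(r_j)$, and $\mathfrak{k}$ acts trivially on every other factor. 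Each $\mathfrak{g}^a$ is the direct sum of two such blocks, for instance $\mathfrak{g}^b\cong\mathcal{M}(r_1,r_2)\oplus\mathcal{M}(r_3,r_4)$. So everything reduces to deciding when $\mathbb{R}^{r_i}\otimes\mathbb{R}^{r_j}$ is trivial, irreducible, or neither, as a module over $\mathfrak{so}(r_i)\oplus\mathfrak{so}(r_j)$.

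\textbf{Cases (1) and (4).} All $r_i\le1$, so $\mathfrak{k}=\{0\}$. Hence the representation on every $\mathfrak{g}^a$ is trivial. Case (4) is exactly example~\ref{An example of a non-perpendicular decomposition}.

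\textbf{Case (2).} This is exactly what example~\ref{The flag manifold with (r_1,r_2,r_3,r_4)=(2,1,1,0)} proves via proposition~\ref{the infinitesimal isotropy representation about irreducible Lie algebras}. The only extra step is to match the labels: there $\mathfrak{g}^{(-1,-1)}=\langle I_1-J_1\rangle$ is the trivial summand. This can also be checked directly. $\mathfrak{k}=\mathfrak{so}(2)$ acts trivially on the $C_2$-block $\mathcal{M}(1,1)$, and acts by rotation on $\mathcal{M}(2,1)\cong\mathbb{R}^2$, which is irreducible over $\mathbb{R}$ because $\mathfrak{so}(2)$ has no real eigenvector.

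\textbf{Case (3).} Here $r_4=0$, so each $\mathfrak{g}^a$ is a single block $\mathcal{M}(r_i,r_j)$ with $\{i,j\}\subset\{1,2,3\}$. I would invoke the generalized Wallach space classification \cite[Table 1]{Nikonorov}, as quoted before example~\ref{The flag manifold with (r_1,r_2,r_3,r_4)=(2,1,1,0)}; the hypothesis $r_1,r_2\ge2$ excludes the two exceptional tuples. Nontriviality holds because at least one of $r_i,r_j$ is $\ge2$, so the standard action is nonzero. For irreducibility directly, one can use that the real tensor product $\mathbb{R}^p\otimes\mathbb{R}^q$ of standard representations of $\mathfrak{so}(p)\oplus\mathfrak{so}(q)$ is irreducible whenever its commutant is $\mathbb{R}$. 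This is the main obstacle: it fails when $p=q=2$, where the module $\mathbb{R}^2\otimes\mathbb{R}^2$ splits into two pieces. Since $r_3$ may equal $2$ with $r_1=r_2=2$, I would rely on the \emph{connected} form of the cited classification at the group level. Alternatively, I would note that for $S(O(r_1)\times\cdots)$ the isotropy group contains reflections, and these interchange the two pieces.

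\textbf{Case (5).} Here $r_4\ge1$, and the tuple is not $(1,1,1,1)$ since $r_1\ge2$. By \cite{Arvanitoyeorgos-Sakane-Statha}, $\mathfrak{m}$ decomposes into the six irreducible summands displayed before the proposition. Each $\mathfrak{g}^a$ is the sum of exactly two of them, for instance $\mathfrak{g}^b=\{A_1\}\oplus\{A_2\}$. Hence each $\mathfrak{g}^a$ has two isotropy summands.
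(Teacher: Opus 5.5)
Your route is the paper's. There the proposition is obtained by combining Nikonorov's Table~1 (case (3)), the $(2,1,1,0)$ example (case (2)), the six-summand decomposition of Arvanitoyeorgos--Sakane--Statha (case (5)), and $\mathfrak{k}=\{0\}$ (cases (1), (4)). Your treatment of (1), (2) and (4) is correct. The direct block computation in (2) is the cleanest way to pin down which eigenspace carries the trivial action.

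\textbf{Case (3).} The gap here cannot be closed, because the obstruction you found is a counterexample to the statement as written, not a technicality, and it does not need $r_3=2$. Already for $(r_1,r_2,r_3,r_4)=(2,2,1,0)$ one has $\mathfrak{g}^{(+1,-1)}=\mathcal{M}(2,2)$. Take $J=\left(\begin{smallmatrix}0&1\\-1&0\end{smallmatrix}\right)$, $X_1=aJ$, $X_2=bJ$. The action $A\mapsto X_1A-AX_2$ preserves $V_+=\{A\mid AJ=JA\}$ and $V_-=\{A\mid AJ=-JA\}$, acting there as left multiplication by $(a-b)J$ and $(a+b)J$ respectively. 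The same happens whenever two of $r_1,r_2,r_3$ equal $2$.

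Neither of your remedies proves the claim about the \emph{infinitesimal} representation. The connected group $SO(r_1)\times SO(r_2)\times SO(r_3)$ has exactly the same invariant subspaces as $\mathfrak{k}$. The reflection argument (e.g.\ $\mathrm{diag}(R,I_2,-1)$ with $R=\mathrm{diag}(1,-1)$ maps $V_+$ onto $V_-$) only gives irreducibility under the disconnected group $K=S(O(r_1)\times O(r_2)\times O(r_3))$, which is a different assertion. What is provable is (3) under the extra hypothesis that no two of $r_1,r_2,r_3$ equal $2$, or (3) with $\mathrm{Ad}(K)$ in place of $\mathfrak{k}$.

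\textbf{Case (5).} You miss that the same block breaks (5), where you take the six ``irreducible $\mathfrak{k}$-subspaces'' at face value. For $(2,2,1,1)$, $\mathfrak{g}^{(+1,-1)}=\mathcal{M}(2,2)\oplus\mathcal{M}(1,1)$ has three irreducible $\mathfrak{k}$-summands. So the cited decomposition is only $\mathrm{Ad}(K)$-irreducible, again via a reflection in one factor compensated in another, which is possible since $r_3,r_4\ge1$. The paper's citation-based argument has exactly the same defect.

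\textbf{Your irreducibility test.} Even where (3) is true, the test is too weak. For blocks $\mathcal{M}(2,q)$ with $q\neq2$, the commutant contains $A\mapsto JA$ and equals $\mathbb{C}$, not $\mathbb{R}$, so your criterion does not apply to them. The right criterion is that the commutant be a division algebra. That fails exactly for $p=q=2$, where the commutant is $\mathbb{C}\otimes_{\mathbb{R}}\mathbb{C}\cong\mathbb{C}\oplus\mathbb{C}$.
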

In the case~\ref{isotropy rep. w.r.t. SO(5)/S(O(2)times O(1)times O(1)times O(1))}, consider the situation where $r_i=r_j=1$ ($i,j\in\{2,3,4\}$, $i\neq j$).
For example, if $r_2=r_3=1$, then the infinitesimal isotropy representation on the irreducible $\mathfrak{k}$-subspace expressed by $C_2\in\mathcal{M}(r_2,r_3)$ is trivial since $\mathfrak{so}(r_2)=\mathfrak{so}(r_3)=\{0\}$.
Hence, the flag manifolds in the case~\ref{isotropy rep. w.r.t. SO(5)/S(O(2)times O(1)times O(1)times O(1))} such that at most one of $r_2$, $r_3$, $r_4$ is equal to $1$ provide examples that are not generalized Wallach spaces, but are of nontrivial type.
\section{Relationship between Riemannian $\Gamma$-symmetric triples, Riemannian $\Gamma$-symmetric Lie algebras and Riemannian $\Gamma$-symmetric spaces}\label{sec. the relationship between Riemannian Gamma-symmetric triples, Riemannian Gamma-symmetric Lie algebras and Riemannian Gamma-symmetric spaces}

\subsection{Relationship between Riemannian $\Gamma$-symmetric spaces and Riemannian $\Gamma$-symmetric triples}\label{The relationship between Riemannian Gamma-symmetric spaces and Riemannian Gamma-symmetric triples}
Let $\Gamma$ be a finite abelian group (not necessarily $\Gamma\cong\mathbb{Z}_2$ or $\mathbb{Z}_2\times\mathbb{Z}_2$) and $(M,\Gamma,g)$ a Riemannian $\Gamma$-symmetric space again. 
Define $G:=\Aut_0(M,\Gamma,g)$ and $K:=\{a\in G \mid a(o)=o\}$, where $o$ denotes the origin of $M$. 
Then we obtain the identification of $M$ and $G/K$ as a smooth manifold. 
For each $\gamma\in\Gamma$, we define $\sigma_\gamma\in\Aut(G)$ by
\begin{equation*}
    \sigma_\gamma:G\to G; a\mapsto \gamma_o a\gamma_o^{-1}.
\end{equation*}
Note that $\gamma_o a\gamma_o^{-1}\in G$ since $\gamma_o\in\Aut(M,\Gamma,g)$ and $G$ is a normal subgroup of $\Aut(M,\Gamma,g)$. 
Hence the map $\Gamma\to\Aut(G);\gamma\mapsto\sigma_{\gamma}$ is an injective homomorphism. 
We shall prove $F_0(\Gamma,G)\subset K\subset F(\Gamma,G)$, where $\Gamma$ also denotes $\{\sigma_{\gamma}\mid\gamma\in\Gamma\}$. 
For each $k\in K$, we obtain $\sigma_{\gamma}(k)=k$ for all $\gamma\in\Gamma$ since $k\circ\gamma_o=\gamma_{k(o)}\circ k=\gamma_{o}\circ k$. 
Hence $K\subset F(\Gamma,G)$. 
Next, if $\exp(tX)\in F_0(\Gamma,G)$ for all $t\in \mathbb{R}$ and for all $X\in \mathfrak{g}$, then $\exp(tX)(o)\in \mathrm{Fix}(\Gamma_o,M)$. 
Since $o\in M$ is an isolated point of $F(\Gamma_o,M)$, we obtain $\exp(tX)(o)=o$, that is, $\exp(tX)\in K$ for sufficiently small $t$.
Since $K$ is a subgroup of $G$, $\exp(tX)\in K$ for all $t\in\mathbb{R}$. 
This means that $F_0(\Gamma,G)\subset K$.  
Hence $(G,K,\Gamma)$ is a $\Gamma$-symmetric triple. 
Also, since $K$ is compact, $(G,K,\Gamma)$ satisfies the condition (i).

Assume that $\Gamma\cong\mathbb{Z}_2$ or $\mathbb{Z}_2\times\mathbb{Z}_2$.
Recall that $\mathfrak{m}$ denotes the standard complement and $\pi_*(X)=X^{\sharp}_o$ for $X\in\mathfrak{m}$.
The identification of $\mathfrak{m}$ and $T_oM$ via $\pi_*$ induces an inner product $g$ on $\mathfrak{m}$ from $g_o$.
Hence $g$ satisfies \ref{ga is Ad(K)-inv.}.
Since the differential of $\sigma_{\gamma}$, also denoted by $\sigma_{\gamma}$, satisfies $(\gamma_o)_*\circ\pi_*=\pi_*\circ(\sigma_{\gamma}|_{\mathfrak{m}})$, then the map $\sigma_{\gamma}|_{\mathfrak{m}}$ preserves $g$, which implies \ref{deco. is perp w.r.t. g}.
Thus $(G,K,\Gamma,g)$ is a Riemannian $\Gamma$-symmetric triple.
When we construct a Riemannian $\Gamma$-symmetric space from $(G,K,\Gamma,g)$ by the method in section~\ref{sec. Gamma-symmetric triples}, it is isomorphic to the original Riemannian $\Gamma$-symmetric space $(M,\Gamma,g)$. 

Let $(G',K',\Gamma',g')$ be a Riemannian $\Gamma'$-symmetric triple constructed from a Riemannian $\Gamma'$-symmetric space $(M',\Gamma',g')$. 
If $(M,\Gamma,g)$ and $(M',\Gamma',g')$ are isomorphic, then $(G,K,\Gamma,g)\cong(G',K',\Gamma',g')$. 
This is proved as follows:
Let $(\varphi,\Phi)$ be an isomorphism from $(M,\Gamma,g)$ to $(M',\Gamma',g')$. 
We can assume that $\varphi(o)=o'$, where $o'$ denotes the origin of $(M',\Gamma',g')$. 
Define a Lie group isomorphism $\hat{\varphi}:G\to G'$ by 
\begin{equation*}
    \hat{\varphi}(a):=\varphi\circ a\circ\varphi^{-1}\;\;(a\in G).
\end{equation*}
Take $\gamma\in\Gamma$. For $\sigma_{\gamma}$, we define $\hat{\Phi}(\sigma_{\gamma})\in \Aut(G')$ by
\begin{equation*}
    \hat{\Phi}(\sigma_{\gamma})(a'):=\Phi(\gamma)_{o'}\circ a'\circ\Phi(\gamma)_{o'}^{-1}\;\;(a'\in G').
\end{equation*}
Then $(\hat{\varphi},\hat{\Phi})$ satisfies the condition (A) since $(\varphi,\Phi)$ is an isomorphism.
Furthermore, we obtain (B) and (C) since $\varphi(o)=o'$ and $\varphi$ is an isometry, respectively. 
Thus $(\hat{\varphi},\hat{\Phi})$ gives an isomorphism between $(G,K,\Gamma,g)$ and $(G',K',\Gamma',g')$.

\vspace{0.2cm}
 A $\Gamma$-symmetric triple $(G,K,\Gamma)$ is said to be \textit{effective} (resp. \textit{almost effective}) if there exists no nontrivial (resp. nondiscrete) normal subgroup of $G$ contained in $K$. 
 Also, Riemannian $\Gamma$-symmetric triple $(G,K,\Gamma,g)$ is said to be \textit{effective} (resp. \textit{almost effective}) if $(G,K,\Gamma)$ is \textit{effective} (resp. \textit{almost effective}).
\begin{lemma}\label{exists effective one}
    For each Rimannian $\Gamma$-symmetric space $(M,\Gamma,g)$, there exists an effective Riemannian $\Gamma$-symmetric triple $(\bar{G},\bar{K},\Gamma,g)$ such that $(M,\Gamma,g)\cong(\bar{G}/\bar{K},\Gamma,g)$.
\end{lemma}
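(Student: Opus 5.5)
We start from the Riemannian $\Gamma$-symmetric triple $(G,K,\Gamma,g)$ with $G=\Aut_0(M,\Gamma,g)$ and $K$ the isotropy subgroup at the origin $o$, as constructed in subsection~\ref{The relationship between Riemannian Gamma-symmetric spaces and Riemannian Gamma-symmetric triples}. We already know that $(G/K,\Gamma,g)\cong(M,\Gamma,g)$. We then check that this triple is itself effective, so that we may take $\bar{G}=G$ and $\bar{K}=K$.

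The key step is to identify the largest normal subgroup of $G$ contained in $K$. Suppose $N\subset K$ is normal in $G$, and take $n\in N$. For any $a\in G$ we have $a^{-1}na\in N\subset K$, so $n(a(o))=a(o)$. By corollary~\ref{actstransitive}, $G$ acts transitively on $M$, so every point of $M$ has the form $a(o)$. Hence $n$ fixes every point of $M$, and so $n=\id_M$. Since $G$ is by definition a group of isometries of $M$, it acts effectively, and therefore $N$ is trivial. This shows that $(G,K,\Gamma)$ is effective, and so $(G,K,\Gamma,g)$ is effective.

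If one prefers a construction that does not rely on $G$ being realized as a subgroup of $I(M,g)$, there is an alternative. Start from any Riemannian $\Gamma$-symmetric triple $(G,K,\Gamma,g)$ presenting $M$. Let $N=\bigcap_{a\in G}aKa^{-1}$, which is the kernel of the $G$-action on $G/K$, and set $\bar G=G/N$ and $\bar K=K/N$. Each $\sigma_\gamma$ preserves $K$ because $K\subset F(\Gamma,G)$, so it preserves $N$ and descends to $\bar G$. The descended homomorphism $\Gamma\to\Aut(\bar G)$ stays injective because $\Gamma$ acts faithfully on $M=G/K$. We must also check the inclusions $F_0(\Gamma,\bar G)\subset\bar K\subset F(\Gamma,\bar G)$. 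The second is immediate. For the first, the Lie algebra of $F(\Gamma,\bar G)$ is the fixed subalgebra of $\mathfrak g/\mathfrak n$. Because $\Gamma$ is finite, averaging over $\Gamma$ shows that this fixed subalgebra equals $(\mathfrak g^{\Gamma}+\mathfrak n)/\mathfrak n=\mathfrak k/\mathfrak n$. Finally, the metric $g$ on $\mathfrak m\cong T_oM$ is unchanged. Conditions \ref{compactivity of Adjoint action of isotropy group}--\ref{deco. is perp w.r.t. g} persist, since $\Ad(\bar K)$ on $\bar{\mathfrak g}$ is a quotient of a compact group action.

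The only delicate point is the main obstacle: verifying that the triple obtained from $\Aut_0(M,\Gamma,g)$ acts effectively, that is, that no nontrivial element of $G$ acts as the identity on $M$. This holds because $G\subset I(M,g)$, as noted after theorem~\ref{LedgerObata}. With the first route, this verification is all that is needed. The conclusion $(M,\Gamma,g)\cong(\bar G/\bar K,\Gamma,g)$ then follows from the isomorphism already established in that subsection.
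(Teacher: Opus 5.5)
Your proposal is correct, and your first route differs from the paper's argument. The paper takes the triple $(G,K,\Gamma,g)$ built from $(M,\Gamma,g)$ and passes to $\bar G=G/A$, $\bar K=K/A$, where $A$ is the maximal normal subgroup of $G$ contained in $K$; this $A$ is exactly your $N=\bigcap_{a\in G}aKa^{-1}$. You observe that for $G=\Aut_0(M,\Gamma,g)\subset I(M,g)$ this $A$ is already trivial. By transitivity, a normal subgroup lying inside $K$ fixes every point of $M$, and the elements of $G$ are genuine isometries of $M$. So for this particular $G$ the paper's quotient is vacuous, and your argument avoids all the bookkeeping about descending $\Gamma$ to a quotient group.

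The quotient construction, which is your second route, is what is needed when the presenting triple has a $G$ that does not act effectively, for instance a covering group or the simply connected group integrating $\mathfrak{g}$. In that setting your second route supplies checks that the paper only asserts:
\begin{itemize}
\item each $\sigma_\gamma$ descends to $\bar G$, and $\Gamma\to\Aut(\bar G)$ stays injective because $\Gamma$ acts faithfully on $G/K$;
\item averaging over $\Gamma$ shows that the fixed subalgebra of $\mathfrak g/\mathfrak n$ is $\mathfrak k/\mathfrak n$, so $F_0(\Gamma,\bar G)\subset\bar K$;
\item $\Ad_{\bar G}(\bar K)$ is compact, being a continuous image of $\Ad_G(K)$.
\end{itemize}
One point is left implicit there: that $(\bar G,\bar K)$ is effective. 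This is immediate, since a normal subgroup of $\bar G$ inside $\bar K$ pulls back to a normal subgroup of $G$ inside $K$, and hence lies in $N$.
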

\begin{proof}
    We refer to the main idea of the proof. 
    Let $(G,K,\Gamma,g)$ be the Rimannian $\Gamma$-symmetric triple constructed from $(M,\Gamma,g)$ by the method mentioned in subsection~\ref{Riemannian Gamma-symmetric triples}.     
    Let $A$ be the maximal normal subgroup of $G$ contained in $K$ such that its canonical action on $G/K$ is trivial. 
    We define quotient Lie groups $\bar{G}$ and $\bar{K}$ by $\bar{G}=G/A$ and $\bar{K}=K/A$. 
    We write $\bar{\gamma}$ as an automorphism of $\bar{G}$ induced by $\gamma\in\Gamma$, and use the same letters $\Gamma$ for $\{\bar{\gamma}\mid \gamma\in\Gamma\}$. 
    Then $(\bar{G},\bar{K},\Gamma,g)$ is the effective Riemannian $\Gamma$-symmetric triple such that the standard complement coincides with that of $(G,K,\Gamma,g)$ and $(M,\Gamma,g)\cong(\bar{G}/\bar{K},\Gamma,g)$.
\end{proof}

\begin{lemma}
    If $(M,\Gamma,g)$ and $(M',\Gamma',g')$ are locally isomorphic, then there exist $(G,K,\Gamma,g)$ and $(G',K',\Gamma',g')$ such that they are effective and isomorphic, $(M,\Gamma,g)\simeq(G/K,\Gamma,g)$ and $(M',\Gamma',g')\simeq (G'/K',\Gamma',g')$.
\end{lemma}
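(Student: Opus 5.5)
The plan is to pass to universal coverings and then apply the constructions of subsection~\ref{The relationship between Riemannian Gamma-symmetric spaces and Riemannian Gamma-symmetric triples} together with lemma~\ref{exists effective one}. By definition, $(M,\Gamma,g)\simeq(M',\Gamma',g')$ means that the Riemannian universal covering $\Gamma$-symmetric spaces $(\tilde M,\Gamma,\tilde g)$ and $(\tilde M',\Gamma',\tilde g')$ are isomorphic. Let $(\varphi,\Phi)$ be such an isomorphism. By homogeneity (theorem~\ref{LedgerObata}), composing with an element of $\Aut_0$, we may assume that $\varphi$ sends the chosen origin $\tilde o$ to $\tilde o'$.

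First, construct the Riemannian $\Gamma$-symmetric triple $(\tilde G,\tilde K,\Gamma,\tilde g)$ with $\tilde G=\Aut_0(\tilde M,\Gamma,\tilde g)$ and $\tilde K$ the isotropy at $\tilde o$, and similarly $(\tilde G',\tilde K',\Gamma',\tilde g')$. The argument following the construction in that subsection shows that $(\hat\varphi,\hat\Phi)$, with $\hat\varphi(a)=\varphi a\varphi^{-1}$, is an isomorphism of these Riemannian triples. Next, pass to the effective quotients $\bar G=\tilde G/A$ and $\bar G'=\tilde G'/A'$ as in the proof of lemma~\ref{exists effective one}. Here $A$ is the maximal normal subgroup of $\tilde G$ contained in $\tilde K$ acting trivially on $\tilde G/\tilde K$. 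Because this subgroup is characterized intrinsically, the isomorphism $\hat\varphi$ maps $A$ onto $A'$, and also $\tilde K$ onto $\tilde K'$. It therefore descends to a Lie group isomorphism $\bar G\to\bar G'$. Conditions (A), (B) and (C) pass to the quotient: $\Gamma$ acts on $\bar G$ through the induced automorphisms, and the standard complements are unchanged, so $g$ is preserved. This yields effective isomorphic triples $(G,K,\Gamma,g):=(\bar G,\bar K,\Gamma,\tilde g)$ and $(G',K',\Gamma',g')$.

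Finally, lemma~\ref{exists effective one} gives $(\tilde M,\Gamma,\tilde g)\cong(G/K,\Gamma,g)$. It remains to conclude that $(M,\Gamma,g)\simeq(G/K,\Gamma,g)$. For this, observe that the universal covering $\Gamma$-symmetric space of $\tilde M$ is $\tilde M$ itself. Also, an isomorphism of $\Gamma$-symmetric spaces lifts to an isomorphism of their universal coverings, because the lifted local symmetries are uniquely determined, as in section~\ref{sec. Riemannian Gamma-symmetric spaces}. Hence $(M,\Gamma,g)\simeq(\tilde M,\Gamma,\tilde g)\simeq(G/K,\Gamma,g)$, and likewise on the primed side.

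The main obstacle is the compatibility of the effective quotient with the isomorphism $\hat\varphi$. One must check that $\hat\varphi(A)=A'$ and that $\Gamma$ still acts injectively on $\bar G$. Injectivity follows because $\Gamma_{\tilde o}$ acts injectively on $\tilde M=\bar G/\bar K$ by (S1), so each nontrivial $\gamma$ induces a nontrivial automorphism of $\bar G$. A secondary point to verify is that $\simeq$ behaves transitively, using the lifting of isomorphisms to universal coverings.
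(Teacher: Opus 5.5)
Your proposal is correct and follows the same route as the paper: pass to the universal coverings, take the effective triples from lemma~\ref{exists effective one}, and transfer the covering isomorphism to the triples via $(\hat\varphi,\hat\Phi)$. You also make explicit two things the paper's three-line proof leaves implicit, namely compatibility with the effective quotient and the step $\cong\Rightarrow\simeq$. In fact the quotient step you flag as the main obstacle is vacuous: $\tilde G=\Aut_0(\tilde M,\Gamma,\tilde g)\subset I(\tilde M,\tilde g)$ acts effectively and transitively, so any normal subgroup contained in $\tilde K$ fixes every point of $\tilde M$, and hence $A=\{e\}$.
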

\begin{proof}
    Let $(\tilde{M},\Gamma,\tilde{g})$ be Riemannian universal covering $\Gamma$-symmetric spaces of $(M,\Gamma,g)$ and $(\tilde{M'},\Gamma',\tilde{g'})$ that of $(M',\Gamma',g')$. By lemma~\ref{exists effective one}, we obtain $(\tilde{G},\tilde{K},\Gamma,\tilde{g})$ and $(\tilde{G'},\tilde{K'},\Gamma',\tilde{g'})$ such that they are effective and $(M,\Gamma,g)\simeq (\tilde{G}/\tilde{K},\Gamma,\tilde{g})$ and $(M',\Gamma',g')\simeq (\tilde{G'}/\tilde{K'},\Gamma',\tilde{g'})$. 
    Since $(\tilde{M},\Gamma,\tilde{g})$ and $(\tilde{M'},\Gamma',\tilde{g'})$ are isomorphic, $(\tilde{G},\tilde{K},\Gamma,\tilde{g})$ and $(\tilde{G'},\tilde{K'},\Gamma',\tilde{g'})$ are isomorphic. 
\end{proof}
\subsection{Relationship between Riemannian $\Gamma$-symmetric triples and Riemannian $\Gamma$-symmetric Lie algebras}\label{The relationship between Riemannian Gamma-symmetric triples and Riemannian Gamma-symmetric Lie algebras}
Supposse that $\Gamma\cong\mathbb{Z}_2$ or $\mathbb{Z}_2\times\mathbb{Z}_2$.
Let $(G,K,\Gamma,g)$ be a Riemannian $\Gamma$-symmetric triple and  $(\mathfrak{g},\Gamma)$ the $\Gamma$-symmetric Lie algebra associated with $(G,K,\Gamma)$. 
Then $(\mathfrak{g},\Gamma,g)$ is the Riemannian $\Gamma$-symmetric Lie algebra and this $(\mathfrak{g},\Gamma,g)$ is called a Riemannian $\Gamma$-symmetric Lie algebra associated with $(G,K,\Gamma,g)$. 
This construction is commutative with each isomorphic relation. 
In addition, almost effective $\Gamma$-symmetric triples correspond to effective $\Gamma$-symmetric Lie algebras. 
Furthermore, this construction is commutative with the direct product of Riemannian $\Gamma$-symmetric triples and the direct sum of Riemannian $\Gamma$-symmetric Lie algebras. 

Let $(\mathfrak{g},\Gamma,g)$ be a Riemannian $\Gamma$-symmetric Lie algebra. 
Let $G$ be a unique simply connected Lie group such that $\mathfrak{g}=\Lie G$. 
For each $\gamma\in\Aut(\mathfrak{g})$, there exists an automorphism of $G$, also denoted by $\gamma$, such that its differential coincides with the original $\gamma$. 
Define $K:=F_0(\Gamma,G)$ and then $(G,K,\Gamma,g)$ is the Riemannian $\Gamma$-symmetric triple. 
Since $K$ is connected and $G$ is simply connected, we can construct the simply connected Riemannian $\Gamma$-symmetric space $(M,\Gamma,g)$ from $(G,K,\Gamma,g)$ by the method mentioned in subsection~\ref{Riemannian Gamma-symmetric triples}. 
This construction is commutative with each isomorphic relation. 
Furthermore, this construction is commutative with the direct sum of Riemannian $\Gamma$-symmetric Lie algebras and the direct product of Rimannian $\Gamma$-symmetric spaces. 
\subsection{Correspondence between Riemannian $\Gamma$-symmetric Lie algebras and Rimannian $\Gamma$-symmetric spaces}
The aim of this subsection is to show that the decomposition of $\mathbb{Z}_2\times\mathbb{Z}_2$-symmetric Lie algebras given in section~\ref{sec. decomposition of nontrivial type} corresponds to the decomposition of Riemannian $\mathbb{Z}_2\times\mathbb{Z}_2$-symmetric spaces.
\begin{proposition}\label{one-to-one correspondence}
    Assume that $\Gamma\cong\mathbb{Z}_2$ or $\mathbb{Z}_2\times\mathbb{Z}_2$. 
    There exists a one-to-one correspondence between the equivalence class $\mathscr{G}$ of effective Riemannian $\Gamma$-symmetric Lie algebras with respect to $\cong$ and the equivalence class $\mathscr{M}$ of Rimannian $\Gamma$-symmetric spaces with respect to $\simeq$.
\end{proposition}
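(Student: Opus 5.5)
We will build two maps, $\mathcal{F}:\mathscr{G}\to\mathscr{M}$ and $\mathcal{H}:\mathscr{M}\to\mathscr{G}$, out of the constructions in subsections~\ref{The relationship between Riemannian Gamma-symmetric spaces and Riemannian Gamma-symmetric triples} and \ref{The relationship between Riemannian Gamma-symmetric triples and Riemannian Gamma-symmetric Lie algebras}. We then check that both maps are well defined on equivalence classes and that they are mutually inverse.

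\emph{Definition of $\mathcal{F}$.} Given an effective $(\mathfrak{g},\Gamma,g)$, let $G$ be simply connected with $\Lie G=\mathfrak{g}$. Lift $\Gamma$ to $\Aut(G)$, put $K=F_0(\Gamma,G)$, and let $\mathcal{F}[(\mathfrak{g},\Gamma,g)]$ be the class of the simply connected space $(G/K,\Gamma,g)$. One point needs care: condition (i) of a Riemannian triple asks that $\Ad_G(K)$ be compact. Here $K$ is connected and $\Ad_G(K)$ is the connected group generated by $\ad_{\mathfrak{g}}(\mathfrak{k})$, so (i)$'$ gives compactness. An isomorphism $(\varphi,\Phi)$ of Lie algebras lifts to the simply connected groups. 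It preserves $F_0(\Gamma,G)$, hence gives an isomorphism of triples, and therefore an isometry of the quotients compatible with the symmetries. So $\mathcal{F}$ is well defined, even with respect to $\cong$; in particular it is well defined with respect to $\simeq$.

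\emph{Definition of $\mathcal{H}$.} Given $(M,\Gamma,g)$, pass to its universal covering space $(\tilde M,\Gamma,\tilde g)$. Lemma~\ref{exists effective one} gives an effective triple $(\bar G,\bar K,\Gamma,g)$ with $\tilde M\cong\bar G/\bar K$. Let $\mathcal{H}[M]$ be the class of its Lie algebra $(\bar{\mathfrak{g}},\Gamma,g)$. This Lie algebra is effective because $\bar G$ acts effectively: an ideal contained in $\bar{\mathfrak{k}}$ would generate a connected normal subgroup contained in $\bar K$, which must be trivial. For well-definedness, suppose $M\simeq M'$. Then $\tilde M\cong\tilde M'$, and the preceding lemma yields isomorphic effective triples, hence isomorphic Lie algebras. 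Some care is needed here: the effective triple built from $\tilde M$ via $\Aut_0$ is canonical up to isomorphism, so the choice made in Lemma~\ref{exists effective one} does not matter.

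\emph{$\mathcal{H}\circ\mathcal{F}=\id$.} Start from effective $(\mathfrak{g},\Gamma,g)$ and form $M=G/K$ with $G$ simply connected. The effective triple attached to $M$ has group $\bar G=G/A$, where $A$ is the kernel of the action. Since $\mathfrak{g}$ is effective, $A$ is discrete, so $\Lie\bar G=\mathfrak{g}$. We must also check that the induced automorphisms and the inner product on $\mathfrak{m}$ are the original ones; the standard complement is unchanged, as noted in the proof of Lemma~\ref{exists effective one}. Strictly, $\mathcal{H}$ was defined through $\Aut_0(M,\Gamma,g)$, which may be larger than the image of $G$. So the real step is to show that the image of $G$ in $I(M,g)$ equals $\Aut_0(M,\Gamma,g)$, or at least has the same Lie algebra. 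I would prove this by showing that the Lie algebra $\mathfrak{a}$ of $\Aut_0$ is itself a $\Gamma$-symmetric Lie algebra whose $\Gamma$-fixed part is the isotropy algebra and whose complement is identified with $T_oM=\mathfrak{m}$. The bracket on $\mathfrak{m}$, together with $\mathfrak{k}=\sum[\mathfrak{g}^a,\mathfrak{g}^a]$ from Lemma~\ref{property_of_semisimple_Riemannian_Gamma-symmetric_Lie_algebra}, would then force $\mathfrak{a}=\mathfrak{g}$ modulo effectivity.

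\emph{$\mathcal{F}\circ\mathcal{H}=\id$.} Start from $M$ and form its effective Lie algebra. $\mathcal{F}$ produces a simply connected space $G/K$ whose triple covers $(\bar G,\bar K)$ locally. By uniqueness of simply connected coverings, $G/K$ is isomorphic to $\tilde M$ compatibly with the symmetries, using the unique extension of local symmetries (\cite[Corollary~6.4 Chapter~VI]{Kobayashi-Nomizu}). Hence $G/K\simeq M$.

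\emph{Main obstacle.} The hardest step is the identification in $\mathcal{H}\circ\mathcal{F}=\id$, namely ruling out that $\Aut_0(M,\Gamma,g)$ is strictly larger than the group coming from $\mathfrak{g}$. Outside the semisimple nontrivial-type setting this may genuinely fail. If the general argument breaks down, I would restrict to the cases relevant to theorem~\ref{decomposition of semisimple Lie algebra of nontrival type}, or rely on the $\Gamma$-equivariance of automorphisms to control the extra symmetries.
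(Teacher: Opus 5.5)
\textbf{Verdict: there is a genuine gap, and it cannot be closed, because the statement is false in this generality.} Your two maps, and your check of $\mathcal{F}\circ\mathcal{H}=\id$ (build the simply connected space from the Lie algebra and identify it with $\tilde M$ by uniqueness of the universal covering), are exactly the paper's argument. In fact that is the only composite the paper checks; it never addresses $\mathcal{H}\circ\mathcal{F}=\id$, which is the step you flag.

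\textbf{Counterexample.} Take $\Gamma=\mathbb{Z}_2$ and $n\geq 2$. Compare the abelian pair $(\mathbb{R}^n,-\id)$, with $\mathfrak{k}=0$, against $(\mathfrak{e}(n),\theta)$, where $\mathfrak{e}(n)=\mathfrak{so}(n)\ltimes\mathbb{R}^n$ and $\theta(A,v)=(A,-v)$. Give both the standard inner product on $\mathfrak{m}=\mathbb{R}^n$.
\begin{itemize}
\item Both are effective Riemannian symmetric Lie algebras: an ideal of $\mathfrak{e}(n)$ inside $\mathfrak{so}(n)$ would have to annihilate $\mathbb{R}^n$, hence is zero.
\item They are not isomorphic, since their dimensions differ.
\item Both yield flat $\mathbb{R}^n$ with $s_x(y)=2x-y$. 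Every Euclidean motion $\varphi$ satisfies $\varphi\circ s_x=s_{\varphi(x)}\circ\varphi$, so $\Aut_0=SE(n)$, and $\mathcal{H}$ sends both classes to $\mathfrak{e}(n)$.
\end{itemize}
Flat $\mathbb{R}^{p+q+r}$ with $\mathfrak{g}^b=\mathbb{R}^p$, $\mathfrak{g}^c=\mathbb{R}^q$, $\mathfrak{g}^d=\mathbb{R}^r$ does the same for $\mathbb{Z}_2\times\mathbb{Z}_2$.

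This also shows why your sketch ``force $\mathfrak{a}=\mathfrak{g}$ modulo effectivity'' cannot work in general. The surplus $\mathfrak{so}(n)$ in the isotropy algebra acts faithfully on $\mathfrak{m}$, so effectivity does not remove it. Your choice to compute $\Aut_0$ on $\tilde M$ rather than on $M$ is likewise necessary, not merely cautious. The flat torus and $\mathbb{R}^n$ are locally isomorphic, yet their $\Lie\Aut_0$ are $\mathbb{R}^n$ and $\mathfrak{e}(n)$ respectively.

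\textbf{What does hold.} The semisimple case is true, and it is all that Corollary~\ref{correspomdemce between Gamma-irr. and locally irr.} and Theorem~\ref{decomposition of Gamma-symmetric spaces} use. Here is the mechanism your sketch is missing. Let $\mathfrak{a}=\Lie\Aut_0(G/K,\Gamma,g)$.
\begin{enumerate}
\item Left translations give an injection $\mathfrak{g}\hookrightarrow\mathfrak{a}$, injective by effectivity of $\mathfrak{g}$. It is $\Gamma$-equivariant, because $\gamma_o\circ L_h\circ\gamma_o^{-1}=L_{\gamma(h)}$.
\item Since $\mathfrak{a}^e$ is the isotropy algebra, $\sum_{a\neq e}\dim\mathfrak{a}^a=\dim M=\sum_{a\neq e}\dim\mathfrak{g}^a$. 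Hence $\mathfrak{a}^a=\mathfrak{g}^a$ for $a\neq e$.
\item Then $[\mathfrak{a}^e,\mathfrak{g}^a]\subset\mathfrak{g}^a$. By the Jacobi identity and $\mathfrak{k}=\sum_a[\mathfrak{g}^a,\mathfrak{g}^a]$ (item \ref{The form of k and m for semisimple Gamma-symmetric Lie albenras} of Lemma~\ref{property_of_semisimple_Riemannian_Gamma-symmetric_Lie_algebra}, which needs semisimplicity), also $[\mathfrak{a}^e,\mathfrak{k}]\subset\mathfrak{k}$. So $\mathfrak{g}$ is an ideal of $\mathfrak{a}$.
\item Derivations of a semisimple Lie algebra are inner, so $\mathfrak{a}=\mathfrak{g}\oplus\mathfrak{z}_{\mathfrak{a}}(\mathfrak{g})$.
\item The centralizer $\mathfrak{z}_{\mathfrak{a}}(\mathfrak{g})$ is $\Gamma$-stable and meets $\mathfrak{g}$ trivially. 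Its components in $\mathfrak{a}^a=\mathfrak{g}^a$ for $a\neq e$ therefore vanish, so it lies in $\mathfrak{a}^e$.
\item Being an ideal, it generates a connected normal subgroup that fixes $o$, hence fixes every point by transitivity. Since $\Aut_0$ acts effectively, this subgroup is trivial.
\end{enumerate}
Thus $\mathfrak{a}=\mathfrak{g}$, with the same $\Gamma$-action, $\mathfrak{m}$ and $g$, which gives $\mathcal{H}\circ\mathcal{F}=\id$ on semisimple classes. In the flat example, it is precisely the splitting $\mathfrak{a}=\mathfrak{g}\oplus\mathfrak{z}_{\mathfrak{a}}(\mathfrak{g})$ in step 4 that fails.
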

\begin{proof}
As already mentioned in subsection~\ref{The relationship between Riemannian Gamma-symmetric triples and Riemannian Gamma-symmetric Lie algebras}, we can construct an equivalence class of simply connected Riemannian $\Gamma$-symmetric spaces from $\mathscr{G}$. 
This construction induces the map from $\mathscr{G}$ to $\mathscr{M}$. 
Conversely, from subsection~\ref{The relationship between Riemannian Gamma-symmetric spaces and Riemannian Gamma-symmetric triples} and subsection~\ref{The relationship between Riemannian Gamma-symmetric triples and Riemannian Gamma-symmetric Lie algebras}, we can construct effective Riemannian $\Gamma$-symmetric Lie algebras from Rimannian $\Gamma$-symmetric spaces. 
This construction induces the map from $\mathscr{M}$ to $\mathscr{G}$. 
We check that these maps are mutually inverse. 
Let $(M,\Gamma,g)$ be a Rimannian $\Gamma$-symmetric space. 
Then we obtain an effective Riemannian $\Gamma$-symmetric Lie algebra $(\mathfrak{g},\Gamma,g)$ from $(M,\Gamma,g)$. 
For this $(\mathfrak{g},\Gamma,g)$, let $(\tilde{M},\Gamma,\tilde{g})$ be the simply connected Riemannian $\Gamma$-symmetric space constructed from $(\mathfrak{g},\Gamma,g)$. 
From the construction, $(\tilde{M},\Gamma,\tilde{g})$ is the universal covering Riemannian $\Gamma$-symmetric space of $(M,\Gamma,g)$. 
Hence we have $(M,\Gamma,g)\simeq (\tilde{M},\Gamma,\tilde{g})$.
\end{proof}

Since the correspondence in proposition~\ref{one-to-one correspondence} is commutative with direct sum and direct product relation, we have the following corollary:
\begin{corollary}\label{correspomdemce between Gamma-irr. and locally irr.}
Assume that $\Gamma\cong\mathbb{Z}_2$ or $\mathbb{Z}_2\times\mathbb{Z}_2$. 
Let $(\mathfrak{g},\Gamma,g)$ be a semisimple Riemannian $\Gamma$-symmetric Lie algebra of nontrivial type and $(M,\Gamma,g)$ a Riemannian $\Gamma$-symmetric space corresponding to $(\mathfrak{g},\Gamma,g)$ in the sense of proposition~\ref{one-to-one correspondence}. 
Then $(M,\Gamma,g)$ is locally irreducible if and only if $(\mathfrak{g},\Gamma,g)$ is $\Gamma$-irreducible.
\end{corollary}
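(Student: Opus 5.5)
We prove the two implications separately. Throughout, we pass between spaces and Lie algebras using proposition~\ref{one-to-one correspondence}. We also use the compatibility of the constructions in subsections~\ref{The relationship between Riemannian Gamma-symmetric spaces and Riemannian Gamma-symmetric triples} and \ref{The relationship between Riemannian Gamma-symmetric triples and Riemannian Gamma-symmetric Lie algebras} with direct products and direct sums.

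First suppose $(\mathfrak{g},\Gamma,g)$ is $\Gamma$-reducible. We show that $(M,\Gamma,g)$ is locally reducible.
\begin{itemize}
\item Since $(\mathfrak{g},\Gamma,g)$ is effective, semisimple and of nontrivial type, theorem~\ref{decomposition of semisimple Lie algebra of nontrival type} gives $(\mathfrak{g},\Gamma,g)\cong(\mathfrak{g}_1,\Gamma_1,g_1)\oplus\cdots\oplus(\mathfrak{g}_r,\Gamma_r,g_r)$.
\item Reducibility forces $r\geq 2$. Indeed, if $r=1$ then $\mathfrak{g}\cong\mathfrak{g}_1$ would be $\Gamma$-irreducible.
\item Group the summands as $(\mathfrak{g}_1,\Gamma_1,g_1)\oplus\big((\mathfrak{g}_2,\Gamma_2,g_2)\oplus\cdots\big)$. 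This is a direct sum of two Riemannian symmetric Lie algebras, each with $\mathfrak{m}_i\neq\{0\}$ by lemma~\ref{property_of_semisimple_Riemannian_Gamma-symmetric_Lie_algebra}~\ref{The form of k and m for semisimple Gamma-symmetric Lie albenras}.
\item Apply the simply connected construction of subsection~\ref{The relationship between Riemannian Gamma-symmetric triples and Riemannian Gamma-symmetric Lie algebras}. It commutes with direct sums and products, so it produces $\tilde M\cong M_1\times M_2$ as a product space with some subgroup $\Sigma$.
\item Each $M_i$ is positive-dimensional because $\mathfrak{m}_i\neq0$.
\item By proposition~\ref{one-to-one correspondence}, $M\simeq\tilde M$, so $M$ is locally reducible.
\end{itemize}

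Conversely, suppose $(M,\Gamma,g)\simeq(M_1,\Gamma_1,g_1)\times(M_2,\Gamma_2,g_2)$ with $\dim M_i>0$. We show that $(\mathfrak{g},\Gamma,g)$ is $\Gamma$-reducible.
\begin{itemize}
\item Take the effective Riemannian Lie algebras $(\mathfrak{g}_i,\Gamma_i,g_i)$ attached to $M_i$.
\item The product corresponds to a direct sum $(\mathfrak{g}_1\oplus\mathfrak{g}_2,\Sigma,g_1\oplus g_2)$. This direct sum is effective, since an ideal inside $\mathfrak{k}_1\oplus\mathfrak{k}_2$ projects to ideals inside each $\mathfrak{k}_i$.
\item By the injectivity in proposition~\ref{one-to-one correspondence}, $(\mathfrak{g},\Gamma,g)\cong(\mathfrak{g}_1\oplus\mathfrak{g}_2,\Sigma,g_1\oplus g_2)$.
\item Let $\varphi$ be the isomorphism. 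By (A)$'$, the preimage $\varphi^{-1}(\mathfrak{g}_1)$ is an ideal of $\mathfrak{g}$ stable under every element of $\Gamma$. This uses that $\Sigma\subset\Gamma_1\times\Gamma_2$ preserves $\mathfrak{g}_1$.
\item This ideal is nonzero since $\dim M_1>0$ gives $\mathfrak{m}_1\ne0$.
\item It is proper since $\mathfrak{m}_2\neq0$.
\item Hence $(\mathfrak{g},\Gamma)$ is $\Gamma$-reducible.
\end{itemize}

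The main obstacle is the claim that the correspondence of proposition~\ref{one-to-one correspondence} really commutes with products. Concretely, one must show that the effective Lie algebra built from $\Aut_0(M_1\times M_2,\Sigma,g)$ is the direct sum of those of the factors, up to isomorphism. A priori, $\Aut_0$ of a product could be larger than the product of the $\Aut_0$'s.

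To avoid this, I would work only with the simply connected model built from Lie algebras. Local isomorphism is defined through universal coverings, and the universal cover of $M_1\times M_2$ is $\tilde M_1\times\tilde M_2$. This is exactly the space obtained from $\mathfrak{g}_1\oplus\mathfrak{g}_2$ with $K=F_0(\Sigma,\tilde G_1\times\tilde G_2)=F_0(\Gamma_1,\tilde G_1)\times F_0(\Gamma_2,\tilde G_2)$. The last equality holds because $\Sigma$ is a subsymmetric structure, so its fixed algebra is $\mathfrak{k}_1\oplus\mathfrak{k}_2$. Well-definedness of the correspondence then yields the needed isomorphism of Lie algebras.

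The nontrivial-type hypothesis is used only in the first direction. There it ensures, via theorem~\ref{decomposition of semisimple Lie algebra of nontrival type}, that the ideal splitting is orthogonal for $g$. Example~\ref{An example of a non-perpendicular decomposition} shows that this can fail without the hypothesis.
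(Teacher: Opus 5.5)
Your argument is correct and essentially the paper's: the paper obtains the corollary in one line from the compatibility of the correspondence in proposition~\ref{one-to-one correspondence} with direct sums and direct products, and you unpack that, correctly putting the nontrivial-type hypothesis into theorem~\ref{decomposition of semisimple Lie algebra of nontrival type} (to make the $\Gamma$-invariant ideal splitting $g$-orthogonal) and using the pulled-back ideal $\varphi^{-1}(\mathfrak{g}_1)$ for the converse. One caveat: your detour through the simply connected model does not actually remove the $\Aut_0$ concern you raise, because that concern is exactly what the injectivity of the correspondence asserted in proposition~\ref{one-to-one correspondence} covers; since the paper also takes that proposition as given, your argument stands at the same level of rigor as the paper's.
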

A Riemannian $\Gamma$-symmetric space $(M,\Gamma,g)$ is called \textit{semisimple} if the Lie group $\Aut_0(M,\Gamma,g)$ is semisimple.
Also, $(M,\Gamma,g)$ is a semisimple Riemannian $\Gamma$-symmetric space \textit{of nontrivial type} if the corresponding effective semisimple Riemannian $\Gamma$-symmetric Lie algebra is of nontrivial type.
Noting that ordinary semisimple Riemannian symmetric spaces are always of nontrivial type, corollary~\ref{correspomdemce between Gamma-irr. and locally irr.} and theorem~\ref{decomposition of semisimple Lie algebra of nontrival type} imply the following theorem, which 
leads to the local classification of semisimple Riemannian $\mathbb{Z}_2\times\mathbb{Z}_2$-symmetric spaces of nontrivial type.
\begin{theorem}\label{decomposition of Gamma-symmetric spaces}
Every semisimple Riemannian $\mathbb{Z}_2\times\mathbb{Z}_2$-symmetric space of nontrivial type is locally isomorphic to the direct product of irreducible semisimple Riemannian $\mathbb{Z}_2\times\mathbb{Z}_2$-symmetric spaces of nontrivial type or irreducible semisimple Riemannian symmetric spaces. 
Furthermore, the decomposition is unique up to the order.
\end{theorem}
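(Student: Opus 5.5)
The plan is to transfer Theorem~\ref{decomposition of semisimple Lie algebra of nontrival type} from the Lie algebra side to the space side through the correspondence of Proposition~\ref{one-to-one correspondence}. Let $(M,\Gamma,g)$ be a semisimple Riemannian $\mathbb{Z}_2\times\mathbb{Z}_2$-symmetric space of nontrivial type. By subsection~\ref{The relationship between Riemannian Gamma-symmetric spaces and Riemannian Gamma-symmetric triples} and Lemma~\ref{exists effective one}, I take $G=\Aut_0(M,\Gamma,g)$ and pass to the effective triple. Its associated Riemannian $\Gamma$-symmetric Lie algebra $(\mathfrak{g},\Gamma,g)$ is effective, since almost effective triples correspond to effective Lie algebras. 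It is semisimple, since $G$ is semisimple and quotienting by a discrete normal subgroup does not change $\mathfrak{g}$. By definition it is of nontrivial type.

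Theorem~\ref{decomposition of semisimple Lie algebra of nontrival type} then gives $(\mathfrak{g},\Gamma,g)\cong\bigoplus_i(\mathfrak{g}_i,\Gamma_i,g_i)$. Each factor is effective, semisimple, $\Gamma_i$-irreducible and of nontrivial type, with $\Gamma_i$ a nontrivial quotient of $\Gamma$, so $\Gamma_i\cong\mathbb{Z}_2$ or $\mathbb{Z}_2\times\mathbb{Z}_2$. I then apply the construction of subsection~\ref{The relationship between Riemannian Gamma-symmetric triples and Riemannian Gamma-symmetric Lie algebras} to each factor: take the simply connected group $G_i$, set $K_i=F_0(\Gamma_i,G_i)$, and obtain $M_i=G_i/K_i$. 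That construction commutes with direct sums and direct products. Together with Proposition~\ref{one-to-one correspondence}, this yields $(M,\Gamma,g)\simeq\prod_i(M_i,\Gamma_i,g_i)$, the product structure being the one determined by the subgroup $\Sigma$ that realises the direct sum.

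Next I identify the factors. If $\Gamma_i\cong\mathbb{Z}_2$, then $(\mathfrak{g}_i,\Gamma_i,g_i)$ is an irreducible semisimple orthogonal symmetric Lie algebra, so $M_i$ is an irreducible semisimple Riemannian symmetric space. Here the metric is an arbitrary $\ad(\mathfrak{k}_i)$-invariant inner product, which is still a symmetric-space metric. If $\Gamma_i\cong\mathbb{Z}_2\times\mathbb{Z}_2$, then $M_i$ is a semisimple $\mathbb{Z}_2\times\mathbb{Z}_2$-symmetric space of nontrivial type. Corollary~\ref{correspomdemce between Gamma-irr. and locally irr.} converts $\Gamma_i$-irreducibility of $\mathfrak{g}_i$ into local irreducibility of $M_i$. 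One point needs care: semisimplicity of $M_i$ requires $\Aut_0(M_i,\Gamma_i,g_i)$ to be semisimple, which may be larger than $G_i$. I will instead read ``semisimple'' through the effective Lie algebra produced by the correspondence. Since the correspondence is a bijection on classes, the Lie algebra attached to $M_i$ is $(\mathfrak{g}_i,\Gamma_i,g_i)$ itself, which is semisimple.

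For uniqueness, suppose $(M,\Gamma,g)$ is locally isomorphic to another such product $\prod_j(M'_j,\Gamma'_j,g'_j)$. Passing through Proposition~\ref{one-to-one correspondence} and compatibility with products gives a second decomposition of $(\mathfrak{g},\Gamma,g)$ into effective, semisimple, irreducible summands of nontrivial type. For the symmetric-space factors, nontrivial type holds automatically, since semisimple symmetric spaces are always of nontrivial type. The uniqueness part of Theorem~\ref{decomposition of semisimple Lie algebra of nontrival type} then matches the summands up to isomorphism and permutation, and the bijection carries this matching back to the factors $M_i$ up to order.

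I expect the main obstacle to be the bookkeeping in the compatibility step. It must be checked that a direct sum taken with respect to a subgroup $\Sigma\subset\prod\Gamma_i$ corresponds exactly to the product space with structure $\mu^{\Sigma}$. It must also be checked that local isomorphism classes of products are determined factorwise, so that the correspondence of Proposition~\ref{one-to-one correspondence} really is multiplicative. My approach would be to verify this on universal covers, where $\widetilde{M}=\prod\widetilde{M_i}$ and the symmetries at the origin are induced by $\sigma=\sigma_1\times\cdots\times\sigma_r$ and $\tau=\tau_1\times\cdots\times\tau_r$ as in \eqref{sigma,tau}.
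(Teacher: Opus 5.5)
Your proposal is correct and follows essentially the same route as the paper, which obtains the theorem directly by combining Theorem~\ref{decomposition of semisimple Lie algebra of nontrival type} with the correspondence of Proposition~\ref{one-to-one correspondence}, taken as compatible with direct sums and products. It also uses Corollary~\ref{correspomdemce between Gamma-irr. and locally irr.} and the observation that semisimple Riemannian symmetric spaces are automatically of nontrivial type, exactly as you do. The points you flag, namely multiplicativity of the correspondence and reading semisimplicity of the factors through the bijection, are precisely what the paper takes for granted in its discussion of that correspondence, so nothing is missing relative to its argument.
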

\bibliography{bibtex}
\bibliographystyle{plain}
\nocite{Equivariant_formality_of_the_isotropy_action_on_Z2Z2-symmetric_spaces}
\end{document}